\newcommand{\CC}{\mathbb{C}}
\newcommand{\R}{\mathbb{R}}
\newcommand{\N}{\mathbb{N}}
\newcommand{\PSH}{\mathrm{PSH}}
\newcommand{\SRF}{\mathrm{SRF}}
\newcommand{\SF}{\mathrm{SF}}
\newcommand{\Ric}{\mathrm{Ric}}
\newcommand{\Rm}{\mathrm{Rm}}
\newcommand{\Kod}{\textrm{Kod}}
\newcommand{\de}{\partial}
\newcommand{\dbar}{\overline{\partial}}
\newcommand{\ddt}{\frac{\partial}{\partial t}}
\newcommand{\ddbar}{\sqrt{-1} \partial \overline{\partial}}
\newcommand{\tr}[2]{\mathrm{tr}_{#1}{#2}}
\newcommand{\vp}{\varphi}
\newcommand{\om}{\omega}
\newcommand{\Om}{\Omega}
\newcommand{\ti}[1]{\tilde{#1}}
\newcommand{\of}{\omega_{\textrm{SRF}}}
\newtheorem{theorem}{Theorem}[section]
\newtheorem{lemma}[theorem]{Lemma}
\newtheorem{corollary}[theorem]{Corollary}
\newtheorem{proposition}[theorem]{Proposition}
\numberwithin{equation}{section}
\theoremstyle{definition}
\newtheorem{remark}[theorem]{Remark}
\theoremstyle{definition}
\begin{document}

\title{A ``boundedness implies convergence'' principle and its applications to collapsing estimates in K\"ahler geometry}

\begin{abstract}
We establish a general ``boundedness implies convergence'' principle for a family of  evolving Riemannian metrics. We then apply this principle to collapsing Calabi-Yau metrics and normalized K\"ahler-Ricci flows on torus fibered minimal models to obtain convergence results.
\end{abstract}
\author{Wangjian Jian}
\address{School of Mathematical Sciences, Peking University, Beijing, China 100871}
\email{1401110008@pku.edu.cn}
\author{Yalong Shi}
\address{Department of Mathematics, Nanjing University, Nanjing, China 210093}
\email{shiyl@nju.edu.cn}
\maketitle
\thispagestyle{empty}
\markboth{Boundedness implies convergence}{Wangjian Jian and Yalong Shi}

\tableofcontents

\section{Introduction}\label{intro}

 In differential geometry, we usually try to find ``canonical'' geometric objects, e.g. Einstein metrics, minimal surfaces, etc., through a deformation process. Starting with a given object, we deform it through a suitable geometric flow or continuity path to get the desired metric or submanifold. To obtain convergence, we need a priori bounds for the higher order covariant derivatives of the evolving functions, metrics or curvature tensors. In general, such bounds are invalid and singularities may form. However, if we have curvature bounds or non-collapsing condition, general theories from Riemannian geometry and PDE usually gave us a lot of information about the formation of singularities and in the end we still get good geometric results. On the other hand, there are some important cases in which we lose curvature bounds and the metrics may collapse to lower dimensional spaces. In such cases, a priori bounds play a more decisive role. It is still interesting to know whether certain tensor fields converge to some degenerate or singular one.

In this paper, we prove a simple but useful ``Boundedness Implies Convergence" lemma, which says that in certain situation if we have the convergence of an evolving tensor field together with uniform estimates of its covariant derivatives, then we automatically have the convergence of its covariant derivatives. In spirit, this is similar to the simple fact in calculus that if a smooth function on $\R^n$ has bounded partial derivatives of all orders and if the function converges to 0 at infinity, then all its partial derivatives converge to 0 at infinity. Indeed, if we have uniform equivalence of metrics, this follows easily from interpolation type inequalities. Instead, here we assume the existence of good cut-off functions. In applications, such cut-off functions usually arise as the pull back of cut-off functions from the base manifold of a fibration. To be precise, we have:

\begin{lemma}[The ``Boundedness Implies Convergence'' Principle]\label{Thm: BIC principle}
Let $X$ be an n-dimension Riemannian manifold (not necessarily to be compact or complete) and $U$ be an open subset. Let $\ti{g}(t)$ be a family of Riemannian metrics on $X$, $t\in \R$ and let $\eta(t)$ be a family of smooth functions or general tensor fields on $X$, satisfying the following conditions:

There exists positive constants $A_1, A_2, \dots$, and a positive function $h_0(t)$ which tends to zero as $t\to \infty$ such that
\begin{enumerate}
\item [(A)] ~$\|\eta(t)\|_{C^0(U,\ti{g}(t))}\leq h_0(t).$
\item [(B)] ~$\|\eta(t)\|_{C^k(U,\ti{g}(t))}\leq A_k, ~for~ $k=1,2,\dots$~. $
\item [(C)] ~For any compact subset $K\subset\subset U$, there exists  smooth cut-off function $\rho$ with compact support $\hat{K}\subset\subset U$ such that $0\leq\rho\leq 1$, and  $\rho\equiv 1$ in a neighborhood of  $K$, satisfying
\begin{equation}\label{Equ: cutoff bound}
\left|\nabla{\rho}\right|_{\ti{g}(t)}^2+|\Delta_{\ti{g}(t)}\rho|\leq B_{K}.
\end{equation}
on $\hat{K}\times [0,\infty)$, for some constant $B_{K}$ independent of $t$ (but may depend on the geometry of $K$).
\end{enumerate}

Then we have: For any compact subset $K\subset\subset U$ the estimates
\begin{equation}\label{Equ: convergence in general}
\|\eta(t)\|_{C^k(K,\ti{g}(t))}\leq h_{K,k}(t).
\end{equation}
where $h_{K,k}(t)$ are positive functions which tend to zero as $t\to \infty$, depenging on the constants $A_0$, $A_1$, $\dots$, $A_{k+2}$, $B_K$ and the function $h(t)$.
\end{lemma}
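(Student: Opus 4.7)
My plan is to establish a pointwise Landau--Kolmogorov type interpolation along $\ti{g}(t)$-geodesics, with the cut-off function $\rho$ supplying a uniform in $t$ lower bound on an ``inner radius'' of $K$ inside $\hat{K}$.

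First I would deduce from the cut-off a uniform lower bound
\[
\mathrm{dist}_{\ti{g}(t)}(K, X \setminus \hat{K}) \geq r_0 := 1/\sqrt{B_K}
\]
for all $t \geq 0$. Indeed, extending $\rho$ by zero outside $\hat{K}$, the bound $|\nabla \rho|_{\ti{g}(t)}^2 \leq B_K$ holds globally, and integrating $|\nabla \rho|_{\ti{g}(t)}$ along any $\ti{g}(t)$-rectifiable path from $K$ (where $\rho \equiv 1$) to $X \setminus \hat{K}$ (where $\rho \equiv 0$) forces the path length to be at least $r_0$. In particular, for each $p \in K$ and each unit vector $v \in T_pX$, the $\ti{g}(t)$-geodesic $\gamma_v(s) := \exp^{\ti{g}(t)}_p(sv)$ is defined and remains inside the compact smooth region $\hat{K} \subset U$ for all $s \in [0,r_0]$ (it cannot leave $\hat{K}$ before arc length $r_0$, and the geodesic flow has no finite-time blow-up inside a compact region with smooth metric).

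Next I would run the first-order interpolation. For scalar $\eta$, at $p \in K$ set $v := \nabla \eta(p)/|\nabla \eta(p)|_{\ti{g}(t)}$ and $f(s) := \eta(\gamma_v(s))$; then $f'(0) = |\nabla \eta(p)|_{\ti{g}(t)}$, $|f(s)| \leq h_0(t)$ and $|f''(s)| = |\nabla^2 \eta(\gamma', \gamma')|_{\ti{g}(t)} \leq A_2$ on $[0,r_0]$. The elementary inequalities $f'(s) \geq f'(0) - s A_2$ on $[0, f'(0)/(2A_2)]$ together with $|f(L) - f(0)| \leq 2 h_0(t)$ at $L := \min\bigl(f'(0)/(2A_2),\, r_0\bigr)$ yield
\[
|\nabla \eta(p)|_{\ti{g}(t)}^2 \leq 8 A_2 h_0(t)
\]
as soon as $t$ is large enough that $h_0(t) \leq A_2 r_0^2/2$. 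For tensor-valued $\eta$, a $\nabla$-parallel orthonormal frame along $\gamma_v$ reduces the argument to the scalar one applied componentwise, since $\tfrac{d}{ds}\eta_I(s) = (\nabla_{\gamma'} \eta)_I(s)$ in such a frame.

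For the higher-order estimates I would iterate: pick a nested chain $K = K_0 \subset K_1 \subset \cdots \subset K_k \subset U$ of compact sets, each equipped with its own cut-off from hypothesis (C), and apply the first-order interpolation inductively with $\nabla^j \eta$ playing the role of $\eta$ on $K_j$, using the uniform bound $|\nabla^{j+2} \eta| \leq A_{j+2}$ from (B). This produces the required $h_{K,k}(t) \to 0$ with dependence on $A_0, \dots, A_{k+2}$, on the cut-off constants, and on $h_0$. The main technical burden is bookkeeping --- the parallel-frame reduction for tensors and the propagation of the ``$t$ large enough'' threshold through $k$ iterations --- rather than any substantive obstacle; in particular my approach does not use the bound on $|\Delta_{\ti{g}(t)} \rho|$ from (C), which presumably enters other arguments in the paper rather than this abstract principle.
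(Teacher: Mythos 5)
Your proposal is correct, but it proves the lemma by a genuinely different route from the paper. The paper runs a maximum-principle argument: it forms the auxiliary function $Q=\rho^2\left|\nabla^{k}\eta\right|^4h(t)^{-1}+C\left|\nabla^{k-1}\eta\right|^2h(t)^{-1}$, derives the differential inequality $\left(-\Delta_{\ti{g}(t)}\right)Q\leq-\left|\nabla^{k}\eta\right|^2h(t)^{-1}+C$ from the $C^{k+2}$ bounds, and evaluates at an interior maximum; this is exactly where the bound on $|\Delta_{\ti{g}(t)}\rho|$ in Condition (C) is consumed. You instead extract from the gradient bound $|\nabla\rho|^2_{\ti{g}(t)}\leq B_K$ alone a $t$-uniform lower bound $r_0=B_K^{-1/2}$ on $\mathrm{dist}_{\ti{g}(t)}(K,X\setminus\hat K)$, and then run the classical Landau--Kolmogorov interpolation pointwise along $\ti{g}(t)$-geodesics (with the parallel-frame reduction for tensors). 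This is precisely the ``interpolation type inequality'' route that the introduction suggests is unavailable without uniform equivalence of the metrics; your observation that the cut-off's gradient bound already supplies the needed uniform inner radius shows it is in fact available, and it is arguably more elementary (no PDE, no use of $|\Delta_{\ti{g}(t)}\rho|$) and slightly sharper in its dependence on the constants: your order-$k$ estimate needs only $A_2,\dots,A_{k+1}$ rather than $A_1,\dots,A_{k+2}$, marginally improving Remark 1.2(3). Two small bookkeeping points you should fix in a full write-up: the nesting of the exhaustion should be such that the order-$j$ estimate is established on the \emph{larger} sets and the domain shrinks as $j$ increases (at stage $j$ you need the $C^0$ decay of $\nabla^{j-1}\eta$ on the $r_0$-neighborhood of the current compact set, which lives inside the support $\hat K$ of the previous cut-off, so the natural chain is $K=K_0$, $K_{j+1}=\hat K_j$, with $\nabla^{k-j}\eta$ controlled on $K_j$); and for $t$ below the threshold where $h_0(t)\leq A_2r_0^2/2$ fails you should simply fall back on Condition (B) to define $h_{K,k}(t)$ there, which is harmless since that threshold is exceeded for all large $t$.
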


\begin{remark}\begin{enumerate}
\item Lemma \ref{Thm: BIC principle} also holds true for discrete sequences of  metrics and tensors. This is easily seen from the proof in section \ref{sec: pf of BIC}.
\item In Condition $(A)$ if $h_0(t)$ is of the form $Ce^{-ct}$ for some positive constants $C<\infty$ and $c>0$, then the functions $h_{K,k}(t)$ can be chosen to be $C_{K,k}e^{-c_{K,k} t}$ for some constants $C_{K,k}, c_{K,k}$, i.e. covariant derivatives of $\eta(t)$ also decay exponentially.
\item In  Lemma \ref{Thm: BIC principle} if we only have Condition $(B)$ for $1\leq k\leq N+2$, then we still have the estimate \eqref{Equ: convergence in general} for $1\leq k\leq N$.
\end{enumerate}
\end{remark}

Note that in Lemma \ref{Thm: BIC principle} we do not require the metrics to be uniformly non-degenerate or have bounded curvature and do not require the metrics and tensors satisfy differential equations. So it applies even when the metrics collapse to lower dimensional spaces without curvature bounds. In particular, we shall apply this principle to two collapsing problems in K\"ahler geometry. We hope this simple ``BIC principle'' may find other applications in geometric analysis, especially in collapsing problems.\\

The first application in this paper is on Calabi-Yau degenerations.

Given a Calabi-Yau manifold $X$ with a holomorphic fiber space structure (i.e. there is a holomorphic surjection $f$ onto a lower dimensional variety, which is a holomorphic submersion outside a subvariety), the Calabi-Yau theorem \cite{Yau} assures the existence of unique Ricci-flat K\"ahler metrics on the total space in every K\"ahler classes. Now let the K\"ahler classes approach the pull back of a K\"ahler class from the base, then the volume of these Ricci-flat metrics go to zero. One would like to understand the asymptotic behaviors of these degenerating metrics.

This problem has been much studied in recent years, starting from the pioneering work of Gross-Wilson \cite{GW} on elliptic $K3$ surfaces fibered over the 2-sphere, to more recent works in general dimensions by Tosatti \cite{To}, Tosatti-Zhang \cite{TZ, TZ2}, Gross-Tosatti-Zhang \cite{GTZ, GTZ2}, Hein-Tosatti \cite{HT}, Tosatti-Weinkove-Yang \cite{TWY}, Li \cite{Li}, and elsewhere. From these works, we know that the Ricci-flat metrics collapse, in some sense, to the pullback of a canonical K\"ahler metric on the base, uniformly on compact sets away from the singular fibers.

In this general setting, a $C^0_{\rm loc}$ estimate was proved by Tosatti-Weinkove-Yang in \cite{TWY}, and this estimate can be improved to $C^\infty_{\rm loc}$ estimate when the smooth fibers are tori or finite \'etale quotients of tori by Gross-Tosatti-Zhang \cite{GTZ} and Hein-Tosatti \cite{HT}. Certain components of the first order derivatives were bounded by Tosatti \cite{To} and Tosatti-Weinkove-Yang in \cite{TWY}. An even stronger partial estimate was proved by Tosatti-Zhang in \cite{TZ}: the restriction of $e^t\om(t)$ to $f^{-1}(U)$ converges in the pointed $C^\infty$ Cheeger-Gromov topology to the product of a flat $\mathbb{C}^m$ with a fiber equipped with Ricci-flat metric.

In \cite{HT2}, with a systematic use of iterated blow-up-and-contradiction type arguments, Hein-Tosatti substantially improved the estimate to $C^\infty_{{\rm loc}}$ if the regular fibers are pairwise bi-holomorphic to each other. In the general fibration case, they can improve the $C^0$ convergence of \cite{TWY} to $C^\alpha$ convergence. In a more recent preprint \cite{STZ}, Song-Tian-Zhang proved the uniform diameter bound and the Gromov-Hausdorff convergence of this family of collapsing metrics.

In this paper, we derive from Hein-Tosatti's $C^\infty_{{\rm loc}}$ estimate the corresponding convergence results, and such estimates also imply the $C^\infty_{{\rm loc}}$ asymptotic behavior of the curvature tensor.

To state our results, let $X$ be a compact K\"ahler $(n+m)$-manifold with $c_1(X)=0$ in $H^2(X,\mathbb{R})$ (i.e. a Calabi-Yau manifold), and let $\om_X$ be a Ricci-flat K\"ahler metric on $X$. Suppose that we have a holomorphic map $f:X\to Z$ with connected fibers, where $(Z,\om_Z)$ is a compact K\"ahler manifold, with image $B=f(X)\subset Z$ an irreducible normal subvariety of $Z$ of dimension $m>0$. Then the induced surjective map $f:X\to B$ is a fiber space, and if $S'\subset B$ denotes the singular set of $B$ together with the set of critical values of $f$, and $S=f^{-1}(S')$, then $S'$ is a proper analytic subvariety of $B$, $S$ is a proper analytic subvariety of $X$, and $f:X\backslash S\to B\backslash S'$ is a submersion between smooth manifolds. The fibers $X_b$ for $b \in B \backslash S'$ are Calabi-Yau $n$-folds. Write $\chi=f^*\omega_Z$, which is a smooth nonnegative $(1,1)$ form on $X$, and we will also write $\chi$ for the restriction of $\omega_Z$ to $B\backslash S'$.  

Let $\om_B$ be the current in $[\chi]$ that is smooth and positive on $B\backslash S'$  satisfying
$$\Ric(\om_B)=\om_{{\rm WP}},$$
where $\om_{{\rm WP}}$ is a smooth semipositive Weil-Petersson form on $B\backslash S'$. Its construction will be briefly recalled in section \ref{sec:CY degeneration 2}.  We also have the semi-Ricci flat metric $\of$ on $X\backslash S$, such that for each $b \in B \backslash S'$ , $\of|_{X_b}$ is the  unique Ricci-flat metric on $X_b$ in the K\"ahler class $[\om_X|_{X_b}]$. Define the reference metrics $\ti{\om}_t$ on the regular part $X\backslash S$ by 
\begin{align}
\ti{\om}_t=\om_B+e^{-t}\of.
\end{align}
Let $\om(t)$ be the unique Ricci-flat metric in $[\ti{\om}_t]= [\chi] + e^{-t} [\om_X]$. Then Tosatti-Weinkove-Yang proved in \cite{TWY} that $\|\om(t)-\ti{\om}_t\|_{C^0(K,\ti{\om}_t)} \rightarrow 0, \quad \textrm{as } t\rightarrow \infty$ on any compact set $K\subset X\setminus S$.
Denote the (1,3)-curvature tensor of a K\"ahler metric $\om$ (associated to the Riemannian metric $g$) by $R^\sharp(\om)$, i.e.
\begin{equation}\label{Equ: definition of R-sharp tensor}
{R^\sharp(\om)_{i\bar{j}k}}^l=g^{l\bar{q}}R(\om)_{i\bar{j}k\bar{q}}.
\end{equation}
If all the regular fibers are bi-holomorphic to each other, then $\om_{\textrm{SRF,b}}$ is independent of the base regular point $b$ (see Equation \ref{Equ: identity of reference metric on Y}), and at this time we define the K\"ahler metric $\om_Y$ on $Y$ by
$$\om_Y=\om_{\textrm{SRF,b}},$$
for any $b\in B \backslash S'$. Based on the higher estimates of Hein-Tosatti \cite{HT2}, we have:

\begin{theorem}\label{Thm: convergence of higher order estimate for CY}
Assume all the regular fibers are bi-holomorphic to a fixed Calabi-Yau manifold $Y$. Let $U\subset B\setminus S'$ be an open set such that the fibration is holomorphically trivial over $U$. Identify $f^{-1}(U)$ with $U\times Y$.  Define another reference metrics $\ti{\om}(t)$ on $U\times Y$ by
\begin{align}
\ti{\om}(t)=\om_B+e^{-t}\om_Y.
\end{align}
Then for each compact set $K \subset U$, for any $k \in \N$, we have
\begin{equation}\label{Equ: convergence of higher order estimate for CY}
\|\om(t)-\ti{\om}(t)\|_{C^k(U\times Y,\ti{\om}(t))}\leq h_{K,k}(t),
\end{equation}
and
\begin{equation}\label{Equ: convergence of Rm in CY case}
\|R^\sharp(\om(t))-R^\sharp(\ti{\om}(t))\|_{C^k(K\times Y,\ti{\om}(t))}\leq h_{K,k}(t),
\end{equation}
where $h_{K,k}(t)$ are positive functions which tends to zero as $t\to \infty$, depenging only on $k$ and the domain $K$. 
\end{theorem}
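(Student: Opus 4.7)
The plan is to apply the BIC principle (Lemma~\ref{Thm: BIC principle}) to the tensor field $\eta(t):=\om(t)-\ti\om(t)$, with evolving metric $\ti g(t)$ the Riemannian metric of $\ti\om(t)$, on the open set $f^{-1}(U)$ identified with $U\times Y$ via the trivialization. Since $Y$ is compact, every compact subset of $U\times Y$ is contained in one of the form $K\times Y$ with $K\subset\subset U$, so it suffices to prove the conclusion on such sets; when invoking the lemma I will shrink $U$ to a bounded neighborhood of $K$. I then verify the three hypotheses as follows.

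Condition (B) is exactly the isotrivial $C^\infty_{\rm loc}$ estimate of Hein--Tosatti \cite{HT2}: for every $K\subset\subset U$ and every $k\ge 0$ there is $A_{K,k}$ with $\|\eta(t)\|_{C^k(K\times Y,\ti g(t))}\le A_{K,k}$. Condition (A) follows by combining the TWY $C^0$ estimate $\|\om(t)-\ti\om_t\|_{C^0(K,\ti\om_t)}\to 0$ with the observation that in the isotrivial case $\of-\om_Y$ is a horizontal $(1,1)$-form on $U\times Y$ whose $\ti g(t)$-norm is uniformly bounded, so $\|\ti\om_t-\ti\om(t)\|_{C^0(K\times Y,\ti g(t))}=e^{-t}\|\of-\om_Y\|_{C^0(K\times Y,\ti g(t))}\to 0$. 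For Condition (C), given $K\subset\subset U$ I take $\rho_B\in C^\infty_c(U)$ with $\rho_B\equiv 1$ near $K$ and set $\rho:=f^*\rho_B$; in the trivialization $\rho$ depends only on the base coordinate. Because $\ti g(t)=g_B\oplus e^{-t}g_Y$ is a (scaled) product metric, a direct computation using the block-diagonal inverse and the fact that the cross-Christoffel symbols vanish yields $|\nabla\rho|^2_{\ti g(t)}=|\nabla\rho_B|^2_{g_B}$ and $\Delta_{\ti g(t)}\rho=\Delta_{g_B}\rho_B$, both $t$-independent and hence uniformly bounded on $\mathrm{supp}\,\rho$. Lemma~\ref{Thm: BIC principle} now yields \eqref{Equ: convergence of higher order estimate for CY}.

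The curvature convergence \eqref{Equ: convergence of Rm in CY case} will be extracted from \eqref{Equ: convergence of higher order estimate for CY} by direct expansion. Writing $g(t)=\ti g(t)+\eta(t)$ and using the identity $g(t)^{-1}-\ti g(t)^{-1}=-g(t)^{-1}\eta(t)\ti g(t)^{-1}$ (which is valid once $\eta$ is small in $C^0$, and which also gives uniform equivalence of $\om(t)$ and $\ti\om(t)$ on $K\times Y$), the formula $R_{i\bar jk\bar l}=-\de_i\db_j g_{k\bar l}+g^{p\bar q}\de_i g_{k\bar q}\db_j g_{p\bar l}$ expresses $R^\sharp(\om(t))-R^\sharp(\ti\om(t))$ as a universal algebraic expression in $\eta,\nabla\eta,\nabla^2\eta$, the two metrics and their inverses, and $\nabla\ti g(t)$, each summand carrying a factor from $\{\eta,\nabla\eta,\nabla^2\eta\}$. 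Combined with the $C^\infty$-boundedness of $\ti g(t)$ and uniform equivalence of the two metrics, this gives $\|R^\sharp(\om(t))-R^\sharp(\ti\om(t))\|_{C^k(K\times Y,\ti g(t))}\le C_k\|\eta(t)\|_{C^{k+2}(K\times Y,\ti g(t))}$, which tends to zero by the first part.

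The principal obstacle I anticipate is the Laplacian bound in Condition (C): the $\ti g(t)$-Laplacian of a horizontally-pulled-back cutoff must stay $t$-bounded even as the fibers shrink to zero volume, and this is precisely where the product structure afforded by the isotrivial trivialization is essential. It is also the reason the argument does not extend verbatim to the non-isotrivial case. A secondary bookkeeping point is matching the reference metric $\ti\om(t)$ (needed to apply BIC with a genuine product in $U\times Y$) against the reference $\ti\om_t$ appearing in \cite{TWY,HT2}, which boils down to the horizontal comparison of $\of$ and $\om_Y$ noted above.
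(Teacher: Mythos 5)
Your proposal follows essentially the same route as the paper's proof: apply the BIC lemma to $\eta(t)=\om(t)-\ti{\om}(t)$, obtaining Condition (B) from Hein--Tosatti, Condition (A) from the Tosatti--Weinkove--Yang $C^0$ estimate together with the identity $\om_{\mathrm{SRF},b}=\om_Y$, Condition (C) from a cutoff pulled back from the base, and then the curvature statement by expanding $R^\sharp(\om(t))-R^\sharp(\ti{\om}(t))$ in covariant derivatives of $g(t)$ with respect to $\ti{g}(t)$ (the paper's Lemma \ref{Lem: relation of Rm between two metrics}). Two intermediate claims need correction, though neither invalidates the argument. First, in your verification of (A), the assertion that $\of-\om_Y$ is a horizontal form with uniformly bounded $\ti{g}(t)$-norm is false: only the fiber--fiber block of $\of-\om_Y$ vanishes (precisely because $\om_{\mathrm{SRF},b}=\om_Y$), whereas the mixed base--fiber components of $\of$ generally survive, and their $\ti{g}(t)$-norm grows like $e^{t/2}$ as the fibers shrink; the estimate is saved only because $e^{-t}\cdot e^{t/2}\to 0$, which is exactly how the paper handles the mixed block in passing from \eqref{Equ: local estimate of metric 1} to \eqref{Equ: local estimate of metric 2}. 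Second, the Hein--Tosatti bound is stated against $\hat{\om}(t)=\om_{\CC^m}+e^{-t}\om_Y$ (Euclidean base factor), not against $\ti{\om}(t)=\om_B+e^{-t}\om_Y$, so Condition (B) is not literally their estimate: one must compare covariant derivatives taken with respect to two uniformly equivalent product metrics, which is the content of the paper's Lemma \ref{Lem: bounded estimate between reference metric}, Lemma \ref{Lem: relation between two metrics} and Corollary \ref{Cor: bounded estimate on local base 2}; this is routine but should be stated rather than folded into ``exactly''.
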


Note that since we do not have curvature bounds, we can not derive the decay estimate of $\|Rm(\om(t))-Rm(\ti{\om}(t))\|_{C^k}$ from (\ref{Equ: convergence of Rm in CY case}).\\

Another special case is when the smooth fibers $X_b$ are all complex torus by a holomorphic free action of a finite group, but we allow the complex structure to change. By \cite{GTZ, HT, TZ}, we have $C^\infty$ estimates as well as local curvature bounds on $X\backslash S$. We can apply Lemma \ref{Thm: BIC principle} to obtain:

\begin{theorem}\label{Thm: convergence Rm of CCY with torus fiber}
Assume that for some $b\in B\backslash S'$ the fiber $X_b=f^{-1}(b)$ is bi-holomorphic to a finite quotient of a torus. Let $K\subset X\backslash S$ be any compact subset. Then we have
\begin{equation}\label{Equ: convergence of CCY with torus fiber}
\|\om(t)-\ti\om_t\|_{C^k(K,\ti\om_t)}\leq h_{K,k}(t).
\end{equation}
and
\begin{equation}\label{Equ: convergence Rm of CCY with torus fiber}
\|\Rm(\om(t))-\Rm(\ti\om_t)\|_{C^k(K,\ti\om_t)}\leq h_{K,k+2}(t).
\end{equation}
where $h_{K,k}(t)$ are positive functions which tends to zero as $t\to \infty$, depending only on $k$ and the domain $K$. In particular, when $S=\emptyset$, the estimates are globally true and each $h_k(t)$ is of exponential fast decay.
\end{theorem}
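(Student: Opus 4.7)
The plan is to apply the BIC principle (Lemma~\ref{Thm: BIC principle}) twice on each compact set $K\subset X\setminus S$, with the ambient family $\tilde g(t)$ equal to the Riemannian metric of $\ti\om_t$. First I take $\eta(t)=\om(t)-\ti\om_t$ to prove (\ref{Equ: convergence of CCY with torus fiber}); then, with that in hand, I take $\eta(t)=\Rm(\om(t))-\Rm(\ti\om_t)$ to prove (\ref{Equ: convergence Rm of CCY with torus fiber}).

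For the first application, hypothesis (A) is exactly the $C^0$ statement of Tosatti-Weinkove-Yang recalled in the introduction; hypothesis (B) is the $C^\infty_{\rm loc}$ estimate of Gross-Tosatti-Zhang, Hein-Tosatti and Tosatti-Zhang that is available precisely under our torus-fiber assumption. For hypothesis (C), I take $\rho=f^*\tilde\rho$ for a smooth compactly supported cut-off $\tilde\rho$ on a neighborhood of $f(K)$ in $B\setminus S'$. Because $\rho$ depends only on base coordinates, both $\partial\rho$ and $\partial\dbar\rho$ have no components in the fiber directions, so the $e^t$-growth of the fiber-fiber block of $\tilde g(t)^{-1}$ is never tested; a short Schur-complement computation in coordinates adapted to $f$ then shows that $|\nabla\rho|^2_{\tilde g(t)}$ and $|\Delta_{\tilde g(t)}\rho|$ are dominated uniformly in $t$ by constants depending only on $\tilde\rho$, $\om_B$ and $K$. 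Lemma~\ref{Thm: BIC principle} then delivers (\ref{Equ: convergence of CCY with torus fiber}).

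For the second application, hypothesis (B) follows by combining the $C^\infty_{\rm loc}$ estimates just used with the local curvature bounds on $\om(t)$ that are available under the torus-fiber hypothesis: every covariant derivative of $\Rm(\om(t))$ and $\Rm(\ti\om_t)$, measured with $\ti\om_t$, is bounded on $K$ uniformly in $t$. The cut-off construction from step one carries over verbatim. The key point is to verify hypothesis (A): in a local holomorphic frame the curvature is a universal rational function of the metric, its inverse, and its first two derivatives; subtracting and using the uniform equivalence $\om(t)\asymp\ti\om_t$ on $K$ (which follows from the $C^0$ part of (\ref{Equ: convergence of CCY with torus fiber})), one obtains the pointwise estimate
\[
\|\Rm(\om(t))-\Rm(\ti\om_t)\|_{C^0(K,\ti\om_t)}\le C\,\|\om(t)-\ti\om_t\|_{C^2(K,\ti\om_t)}\le C\,h_{K,2}(t)\to 0.
\]
A second appeal to Lemma~\ref{Thm: BIC principle} then yields (\ref{Equ: convergence Rm of CCY with torus fiber}); the shift in the subscript from $k$ to $k+2$ in the conclusion simply records the fact that $C^k$ control of the curvature difference, through this algebraic identity, consumes $C^{k+2}$ control of the metric difference.

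Finally, when $S=\emptyset$ the base $B$ is globally smooth and compact, the Tosatti-Weinkove-Yang $C^0$ convergence is known to be exponential in this setting, and the argument above is global on $X$. Remark~(2) after Lemma~\ref{Thm: BIC principle} then upgrades the decay of every $h_k(t)$ to exponential, giving the last sentence of the theorem. The only genuinely nontrivial step in the argument is the passage from $C^2$ convergence of the metric to $C^0$ convergence of the curvature, which supplies hypothesis (A) for the second application of BIC; everything else is either a direct invocation of the BIC principle or a citation of the previously established collapsing estimates.
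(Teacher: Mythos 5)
Your proposal is correct and follows essentially the same route as the paper: two applications of Lemma~\ref{Thm: BIC principle}, first to $\eta(t)=\om(t)-\ti\om_t$ (with (A) from Tosatti--Weinkove--Yang, (B) from the torus-fiber higher-order and curvature bounds of Lemma~\ref{Lemma: basic lemma 1}, and (C) from base pull-back cut-offs as in Lemma~\ref{Lem:Condition C}), then to $\eta(t)=\Rm(\om(t))-\Rm(\ti\om_t)$. The only cosmetic difference is that the paper phrases your ``curvature is a rational function of the $2$-jet of the metric'' step covariantly via Lemma~\ref{Lem: relation of Rm between two metrics}, which is the form in which the estimate $C\cdot\|\om(t)-\ti\om_t\|_{C^2(K,\ti\om_t)}$ (with the uniform bound on $\Rm(\om(t))$ absorbed into $C$) is cleanly justified in the collapsing regime.
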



The second application is on the normalized K\"ahler-Ricci flow on torus fibered minimal models.

Let $(X,\om_0)$ be a compact K\"ahler $(n+m)$-manifold with semiample canonical bundle
and Kodaira dimension $m$. Here we assume $m>0, n>0$. The sections of $K_X^\ell,$ for $\ell$ large, give rise to a fiber space $f:X\to B$ called the {\em Iitaka fibration} of $X$, with $B$ a normal projective variety of dimension $m$ and the smooth fibers $X_b=f^{-1}(b), b\in B\backslash S'$ are all Calabi-Yau $n$-manifolds, diffeomorphic to each other. Let $\chi$ be the restriction of $\frac{1}{\ell}\omega_{FS}$ to $B$, as well as its pullback to $X$. This time we consider the solution $\om=\om(t)$ of the {\em normalized} K\"ahler-Ricci flow
\begin{equation}\label{Equ: normalized KRF}
\frac{\de}{\de t}\omega=-\Ric(\omega)-\omega,\quad\omega(0)=\omega_0.
\end{equation}
which exists for all $t\geq 0$. Thanks to \cite{ST1, ST2, ST3, FZ, TWY, TZ, Gi, GTZ} we have that the evolving metrics have uniformly bounded scalar curvature globally and collapse locally uniformly on $X\backslash S$ to a canonical K\"ahler metric on $B\backslash S'$, and moreover the rescaled metrics along the fibers $e^t\omega|_{X_b}$ converge in $C^\infty$ to a Ricci-flat metric on $X_b$. This is the collapsing phenomenon in the K\"ahler-Ricci flow case.

Now, assume the smooth fibers are the quotient of a complex torus by a holomorphic free action of a finite group, then we have smooth collapsing to the generalized K\"ahler-Einstein metrics defined by Song-Tian \cite{ST2} on the regular part with respect to a fixed metric.

An immediate corollary of Lemma \ref{Thm: BIC principle} is the smooth convergence of the solution and its curvatures.

\begin{theorem}\label{Thm: convergence of KRF with torus fiber}
Let $(X^{n+m},\omega_0)$ with $n>0$ be a compact K\"ahler manifold with $K_X$ semiample and $\kappa(X)=m>0$, and let $f:X\to B$ be the fibration as described above. Assume that for some $y\in B\backslash S'$ the fiber $X_b=f^{-1}(b)$ is bi-holomorphic to a finite quotient of a torus. Let $\om(t),t\in[0,\infty)$ be the solution of the K\"ahler-Ricci flow \eqref{Equ: normalized KRF} starting at $\om_0$. Let $K\subset X\backslash S$ be any compact subset. Then we have
\begin{equation}\label{Equ: convergence of KRF with torus fiber 1}
\|\om(t)-\ti\om(t)\|_{C^k(K,\ti{\om}(t))}\leq h_{K,k}(t),
\end{equation}
where $\ti{\om}(t)=e^{-t}\om_{\SRF}+(1-e^{-t})\om_B$ with $\om_B$ the Song-Tian's generalized K\"ahler-Einstein metric current on $B$. (See section \ref{sec:flow} for its definition)
Moreover, we have the smooth convergence of the curvature tensors
\begin{equation}\label{Equ: convergence Rm curvature of KRF with torus fiber}
\|\Rm(\om(t))-\Rm(\ti{\om}(t))\|_{C^k(K,\ti{\om}(t))}\leq h_{K,k}(t),
\end{equation}
\begin{equation}\label{Equ: convergence Ric curvature of KRF with torus fiber}
\|\Ric(\om(t))-\Ric(\ti{\om}(t))\|_{C^k(K,\ti{\om}(t))}\leq h_{K,k}(t),
\end{equation}
\begin{equation}\label{Equ: convergence scalar curvature of KRF with torus fiber}
\|R(\om(t))-R(\ti{\om}(t))\|_{C^k(K,\ti{\om}(t))}\leq h_{K,k}(t),
\end{equation}
where $h_{K,k}(t)$ are positive functions which tends to zero as $t\to \infty$, depending only on $k$ and the domain $K$. In particular, when $S=\emptyset$, the estimates are globally true and each $h_k(t)$ is of exponential fast decay.
\end{theorem}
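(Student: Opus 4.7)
The plan is to apply Lemma~\ref{Thm: BIC principle} twice: first to $\eta(t)=\omega(t)-\tilde\omega(t)$ on $U=X\setminus S$ with metric family $\tilde\omega(t)$ to obtain \eqref{Equ: convergence of KRF with torus fiber 1}, and then to the curvature tensor difference $\Rm(\omega(t))-\Rm(\tilde\omega(t))$ to obtain \eqref{Equ: convergence Rm curvature of KRF with torus fiber}. The Ricci and scalar curvature estimates \eqref{Equ: convergence Ric curvature of KRF with torus fiber}--\eqref{Equ: convergence scalar curvature of KRF with torus fiber} will then follow from the full Riemann curvature estimate by contracting with the (uniformly bounded) inverse metrics, and the exponential decay in the case $S=\emptyset$ will follow from the exponential version of the BIC principle noted in the remark after Lemma~\ref{Thm: BIC principle}.

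For the first application, hypothesis~(A) (that is, $\|\omega(t)-\tilde\omega(t)\|_{C^0(K,\tilde\omega(t))}\to 0$) is the torus-fiber collapsing theorem for the normalized K\"ahler-Ricci flow proved by Song-Tian and sharpened by Fong-Zhang, Tosatti-Weinkove-Yang, Tosatti-Zhang, and Gill; hypothesis~(B), the uniform $C^k$ bounds with respect to $\tilde\omega(t)$, is the $C^\infty_{\rm loc}$ smooth-collapsing estimate in the torus-fiber setting obtained by combining the techniques of Gross-Tosatti-Zhang and Hein-Tosatti with the flow estimates cited above, which also supply the uniform two-sided bound $C_K^{-1}\tilde\omega(t)\leq \omega(t)\leq C_K\tilde\omega(t)$ on any compact $K\subset U$. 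For hypothesis~(C), given such $K$ we choose $K'$ with $f(K)\subset \mathrm{int}(K')\subset\subset B\setminus S'$ and a smooth cutoff $\rho_0$ on $B$ supported in $K'$ with $\rho_0\equiv 1$ near $f(K)$, and set $\rho=f^*\rho_0$. Since $\rho$ is constant along the fibers, both $\nabla\rho$ and $\ddbar\rho$ are purely horizontal, so that
\begin{equation*}
|\nabla\rho|_{\tilde\omega(t)}^{2}+|\Delta_{\tilde\omega(t)}\rho|\leq C\,\mathrm{tr}_{\tilde\omega(t)}\chi \leq B_K
\end{equation*}
uniformly in $t\geq 0$, because $\tilde\omega(t)\geq(1-e^{-t})\om_B$ dominates $\chi$ as $(1,1)$-forms on any compact subset of $B\setminus S'$. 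All three hypotheses being satisfied, Lemma~\ref{Thm: BIC principle} gives \eqref{Equ: convergence of KRF with torus fiber 1}.

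For the curvature statement we re-apply Lemma~\ref{Thm: BIC principle} to $\Rm(\omega(t))-\Rm(\tilde\omega(t))$. Condition~(C) has already been verified; condition~(B) follows from the $C^\infty_{\rm loc}$ bounds on $\omega(t)$ together with the uniformly bounded curvature tensor of the explicit reference metric $\tilde\omega(t)$ on compacts of $U$ (its fiber directions are semi-flat since the regular fibers are finite torus quotients, and the base piece is smooth on $B\setminus S'$); condition~(A) now reduces to the $C^0$ convergence of the curvature difference, which follows from the $C^2$ metric convergence just obtained combined with the two-sided metric bound $C_K^{-1}\tilde\omega(t)\leq \omega(t)\leq C_K\tilde\omega(t)$. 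The hard part of the whole argument is really the input to hypothesis~(B), namely the genuine $C^k$ smooth-collapsing bounds with respect to the collapsing reference $\tilde\omega(t)$; these rely in an essential way on the torus-fiber hypothesis via the semi-flat structure and the rescaled-fiber smooth convergence. Once these bounds are granted, the BIC principle converts them mechanically into convergence. Finally, when $S=\emptyset$ the $C^0$ convergence improves to exponential decay by maximum-principle arguments on the compact manifold $X$, and the exponential version of Lemma~\ref{Thm: BIC principle} then upgrades this to exponential decay of every $C^k$ norm appearing in \eqref{Equ: convergence of KRF with torus fiber 1}--\eqref{Equ: convergence scalar curvature of KRF with torus fiber}.
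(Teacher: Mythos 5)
Your proposal is correct and follows essentially the same route as the paper: verify (A) from the known $C^0$ collapsing estimates, (B) from the $C^\infty_{\rm loc}$ bounds in the torus-fiber case, and (C) by pulling back a cutoff from the base, then reapply the BIC principle (or contract with the convergent metrics) for $\Rm$, $\Ric$ and $R$. The only imprecision is that you cannot take $U=X\setminus S$ literally, since (A) and (B) require uniform constants on $U$; as in the paper one must take $U=f^{-1}(B_2)$ for a relatively compact $B_2\subset\subset B\setminus S'$ containing $f(K)$, which is what your verification of (C) implicitly does anyway.
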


Our next result is to exhibit the relation between the Ricci curvature and scalar curvature of the solution $\om(t)$ and the generalized K\"ahler-Einstein metric $\om_B$. We have

\begin{theorem}\label{Thm: convergence Ric curvature to GKE of KRF with torus fiber}
Assume the same set-up as in Theorem \ref{Thm: convergence of KRF with torus fiber}. Let $K\subset X\backslash S$ be any compact subset. Then we have
\begin{equation}\label{Equ: convergence Ric curvature to GKE of KRF with torus fiber}
\|\Ric(\om(t))+\om_B\|_{C^k(K,\ti{\om}(t))}\leq h_{K,k+2}(t),
\end{equation}
and the convergence of scalar curvature
\begin{equation}\label{Equ: convergence scalar curvature to GKE of KRF with torus fiber}
\|R(\om(t))+m\|_{C^k(K,\ti{\om}(t))}\leq h_{K,k+2}(t),
\end{equation}
where $h_{K,k}(t)$ are positive functions which tends to zero as $t\to \infty$, depending only on $k$ and the domain $K$. In particular, when $S=\emptyset$, the estimate is globally true and each $h_k(t)$ is of exponential fast decay.
\end{theorem}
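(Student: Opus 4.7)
The plan is to reduce, via Theorem~\ref{Thm: convergence of KRF with torus fiber}, to a purely reference-side estimate $\|\Ric(\ti\om(t))+\om_B\|_{C^k(K,\ti\om(t))}\to 0$, and then to establish this estimate by a block-matrix computation combined with Song-Tian's defining equation for the generalized K\"ahler-Einstein metric $\om_B$. Specifically, the triangle inequality together with \eqref{Equ: convergence Ric curvature of KRF with torus fiber} yields
\[
\|\Ric(\om(t))+\om_B\|_{C^k(K,\ti\om(t))}\leq h_{K,k}(t)+\|\Ric(\ti\om(t))+\om_B\|_{C^k(K,\ti\om(t))},
\]
so only the second term requires independent work; the factor $k+2$ in the final rate will arise because this second term is controlled via $C^{k+2}$ bounds on $\ti\om(t)$.

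For the reference-side term I work in a small trivialization $U\times Y$ of the fibration, passing to a finite \'etale cover so that each fiber is a genuine torus. Because $\om_B$ is pulled back from the base and $\of$ restricts to each fiber as a flat metric, $\ti g(t)$ decomposes into a horizontal block $(1-e^{-t})g_B+O(e^{-t})$ and a vertical block $e^{-t}(g_{\SRF})_{vv}$, with off-diagonal entries of size $O(e^{-t})$. The Schur complement formula for the determinant then gives
\[
\log\det\ti g(t)=-nt+\log\det g_B+\log\det(g_{\SRF})_{vv}+O(e^{-t})
\]
in $C^k$ on $K$. Applying $-\sqrt{-1}\partial\bar\partial$, the time-dependent piece $-nt$ drops out, and Song-Tian's equation $\Ric(\om_B)=-\om_B+\om_{\rm WP}$ on $B\setminus S'$, together with the identification of $\sqrt{-1}\partial\bar\partial\log\det(g_{\SRF})_{vv}$ with $\om_{\rm WP}$ (this quantity computes the infinitesimal variation of the fiber volume, which in the torus case is the Weil-Petersson form), causes the two Weil-Petersson contributions to cancel. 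One concludes $\|\Ric(\ti\om(t))+\om_B\|_{C^k(K,\ti\om(t))}\leq C_{K,k}e^{-t}$.

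The scalar estimate is then obtained by contracting the Ricci estimate against $\om(t)$:
\[
R(\om)+m=\tr_\om\bigl(\Ric(\om)+\om_B\bigr)+\bigl(m-\tr_\om(\om_B)\bigr),
\]
where a direct eigenvalue computation with the block form of $\ti g(t)$ gives $\tr_{\ti\om(t)}(\om_B)=\tfrac{m}{1-e^{-t}}+O(e^{-t})$, since $\om_B$ has horizontal rank $m$ and the horizontal block of $\ti\om(t)$ tends to $\om_B$; combining with $\om(t)-\ti\om(t)\to 0$ in $C^k$ gives the exponential decay of $m-\tr_\om(\om_B)$. \emph{Main obstacle.} The delicate point is carrying out the cancellation of Step~2 while keeping careful track of the $\ti\om(t)$-norm. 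Each tensor involved has a definite horizontal/vertical type, but $\ti\om(t)$ collapses in the vertical direction with rate $e^{-t}$, so the natural scalings of $\om_B$, $\om_{\rm WP}$, and $\sqrt{-1}\partial\bar\partial\log\det(g_{\SRF})_{vv}$ must all be measured consistently; verifying the cancellation and the $O(e^{-t})$ error bound in $C^k(K,\ti\om(t))$---rather than in a fixed background metric---is where the bookkeeping is most sensitive and where the loss of two derivatives enters.
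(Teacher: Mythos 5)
Your reduction via the triangle inequality and \eqref{Equ: convergence Ric curvature of KRF with torus fiber} is legitimate (Theorem \ref{Thm: convergence of KRF with torus fiber} is available at this point), but the key step --- the claim $\|\Ric(\ti\om(t))+\om_B\|_{C^k(K,\ti{\om}(t))}\leq C_{K,k}e^{-t}$ --- is not actually established by the Schur-complement computation, and the obstacle you flag at the end is not a bookkeeping nuisance but a genuine gap. The expansion $\log\det\ti g(t)=-nt+\log\det g_B+\log\det(g_{\SRF})_{vv}+E_t$ with $E_t=O(e^{-t})$ in $C^k$ of a \emph{fixed} background metric does not give decay of $\ddbar E_t$ in the collapsing norm: for a $(1,1)$-form $\alpha$, the vertical-vertical block satisfies $|\alpha_{v\bar v}|_{\ti\om(t)}\sim e^{t}|\alpha_{v\bar v}|_{\mathrm{fixed}}$, so an error whose fiber Hessian is merely $O(e^{-t})$ in a fixed metric contributes $O(1)$, not $o(1)$, to $\|\Ric(\ti\om(t))+\om_B\|_{C^0(K,\ti\om(t))}$. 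Since the leading term of $E_t$ is $e^{-t}$ times a function that in general genuinely depends on the fiber coordinates (through $(g_{\SRF})_{hh}$ and $(g_{\SRF})_{hv}$), its fiber Hessian does not automatically vanish to the required order; making your scheme work would require exploiting the fine structure of the semi-flat metric to show that these vertical second derivatives are $o(e^{-t})$, which you have not done. The same loss of powers of $e^{t/2}$ per vertical derivative also threatens the claimed $C^k$ (rather than $C^0$) control of the error. As written, the central estimate of your proof is asserted rather than proved.

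The paper avoids this entirely by never computing $\Ric(\ti\om(t))$. It uses the exact flow identity $\Ric(\om(t))+\om_B=-\ddbar u$ with $u=\dot\vp+\vp-v$, combines the known gradient decay $|\nabla u|^2_{\ti\om(t)}\leq h_K(t)$ (Lemma \ref{Lemma: basic lemma 2}(7)) with the uniform $C^k$ bounds on $\eta(t)=-\ddbar u$ from Lemma \ref{Lemma: basic lemma 3} and Proposition \ref{Prop: higher-order derivative bound of GKE}, and then runs a maximum-principle argument on $Q=\rho^2|\eta|^4_{\ti\om(t)}h(t)^{-1}+C|\nabla u|^2_{\ti\om(t)}h(t)^{-1}$: the inequality $(-\Delta_{\ti\om(t)})|\nabla u|^2\leq -2|\eta|^2+h(t)$ converts first-order decay of $u$ into $C^0$ decay of $\eta$, after which Lemma \ref{Thm: BIC principle} upgrades to $C^k$; the scalar curvature statement then follows from the $C^0$ input of \cite{J} plus another application of the BIC lemma. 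If you want to salvage your approach, you would either need to prove the reference-metric estimate $\|\Ric(\ti\om(t))+\om_B\|_{C^0(K,\ti\om(t))}\to 0$ directly (which, by Theorems \ref{Thm: convergence of KRF with torus fiber} and \ref{Thm: convergence Ric curvature to GKE of KRF with torus fiber}, is true but appears to be no easier than the theorem itself), or replace that step by the potential-theoretic argument above.
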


In \cite{J}, the first author showed that the scalar curvature converges to $-m$ in the $C^0_{loc}$ topology in the general fibration case. Here we improved the topology to $C^{\infty}_{loc}$ in the special case when the fibers are flat. We do expect that Theorem \ref{Thm: convergence Ric curvature to GKE of KRF with torus fiber} holds for the general case.\\



This paper is arranged as follows: In section \ref{sec: pf of BIC}, we prove Lemma \ref{Thm: BIC principle} by maximum principle. Then we apply it to Calabi-Yau degenerations in section \ref{sec:CY degeneration}, where Theorem 1.3, 1.4 are proved. Finally in section \ref{sec:flow}, we prove Theorem 1.5 and 1.6 for normalized K\"ahler-Ricci flow on minimal models whose regular fibers are all finite quotients of complex tori.\\

In this paper, we use the following notations and conventions.

We always denote by $h(t)$ a positive function which tends to zero as $t\to \infty$, and by $h_{K,k}(t)$ we mean that this function also depends on the domain $K$ and the order $k$. We allow these functions change from line to line.

When we compute on a product manifold $X=B\times Y$, we always use a product coordinate system and, we call $B$ the base space and the corresponding indices the base directions, and we call $Y$ the fiber space and the corresponding indices the fiber directions.  We will denote any complex $(1,0)$ ``base'' $\mathbb{C}^m$ direction by a subscript $\mathbf{b}$ and any complex $(1,0)$ ``fiber'' $Y$ direction by a subscript $\mathbf{f}$.

By $\nabla^{k,g}$ we means all the possible covariant derivatives with respect to the metric $g$, including holomorphic and anti-holomorphic covariant derivatives when $g$ is a K\"ahler.

{\bf Acknowledgements.} The first author would like to thank his Ph.D advisor Gang Tian for introducing him to geometric analysis, and for his innumerable encouragements and supports. The first author also would like to thank Yashan Zhang and Dongyi Wei for many helpful conversations. The second authors thanks Gang Tian for his constant supports.  Both authors would like to thank Zhenlei Zhang and Jian Song for many helpful discussions.


\section{Proof of the ``Boundedness Implies Convergence" principle}\label{sec: pf of BIC}

We use the maximum principle to prove Lemma \ref{Thm: BIC principle}.

\begin{proof}[Proof of Lemma \ref{Thm: BIC principle}]
Let $K$ be any compact subset of $U$, and choose compact subsets $\hat{K}\subset\subset U$ and smooth cut-off function $\rho$ as in the Condition $(C)$ of Lemma \ref{Thm: BIC principle}. Let $C_k$ denote constants which depend on $A_1, \dots, A_{k+2}$ which may change from line to line. 

The proof is by induction on the order $k$. The case $k=0$ for Equation \eqref{Equ: convergence in general} is just the Condition $(A)$. Suppose we have established \eqref{Equ: convergence in general} for $0, \dots, k-1$ for $k\geq 1$. Now we prove the estimate \eqref{Equ: convergence in general} for $k$. 

First, using induction hypothesis, we can find some positive function $h(t)$ converging to 0 such that
\begin{equation}\label{Equ: induction hypothesis for k-1}
\left|\nabla^{k-1,\ti{g}(t)}{\eta}\right|_{\ti{g}(t)}^2\leq h(t)
\end{equation}
holds on $\hat{K}$.

Next, using Condition $(B)$, we can compute for every $k\geq 0$ on $U$
\begin{equation}\label{Equ: evolution inequality of eta for general k 1}
\begin{split}
&\left(-\Delta_{\ti{g}(t)}\right)\left(\left|\nabla^{k,\ti{g}(t)}{\eta}\right|_{\ti{g}(t)}^2\right)\\
=& -\ti{g}(t)^{a\bar{b}}\cdot\nabla^{\ti{g}(t)}_a\nabla^{\ti{g}(t)}_{\bar{b}}\left(\left|\nabla^{k,\ti{g}(t)}{\eta}\right|_{\ti{g}(t)}^2\right)\\
=& -2\left|\nabla^{k+1,\ti{g}(t)}{\eta}\right|_{\ti{g}(t)}^2+\nabla^{k+2,\ti{g}(t)}{\eta}*\nabla^{k,\ti{g}(t)}{\eta}\\
\leq& -2\left|\nabla^{k+1,\ti{g}(t)}{\eta}\right|_{\ti{g}(t)}^2+C\cdot\left|\nabla^{k+2,\ti{g}(t)}{\eta}\right|_{\ti{g}(t)}\cdot\left|\nabla^{k,\ti{g}(t)}{\eta}\right|_{\ti{g}(t)}\\
\leq& -2\left|\nabla^{k+1,\ti{g}(t)}{\eta}\right|_{\ti{g}(t)}^2+C\cdot\left|\nabla^{k,\ti{g}(t)}{\eta}\right|_{\ti{g}(t)}
\end{split}
\end{equation}
where $C=C_k$ and $*$ denotes the tensor contraction by the metric $\ti{g}(t)$. Note that in the above inequalities we do not use Bochner-type formulas since we do not have curvature bounds. We use instead the assumption of higher order estimates.  Applying \eqref{Equ: evolution inequality of eta for general k 1} for $k-1\geq 0$, we have on $U$
\begin{equation}\label{Equ: evolution inequality of eta for general k-1}
\left(-\Delta_{\ti{g}(t)}\right)\left(\left|\nabla^{k-1,\ti{g}(t)}{\eta}\right|_{\ti{g}(t)}^2\right)\leq -2\left|\nabla^{k,\ti{g}(t)}{\eta}\right|_{\ti{g}(t)}^2+C\cdot\left|\nabla^{k-1,\ti{g}(t)}{\eta}\right|_{\ti{g}(t)}.
\end{equation}

Also by \eqref{Equ: evolution inequality of eta for general k 1} and Condition $(B)$ , we have on $U$
\begin{equation}\label{Equ: evolution inequality of eta for general k 2}
\begin{split}
&\left(-\Delta_{\ti{g}(t)}\right)\left(\left|\nabla^{k,\ti{g}(t)}{\eta}\right|_{\ti{g}(t)}^4\right)\\
=& 2\left|\nabla^{k,\ti{g}(t)}{\eta}\right|_{\ti{g}(t)}^2\cdot\left(-\Delta_{\ti{g}(t)}\right)\left(\left|\nabla^{k,\ti{g}(t)}{\eta}\right|_{\ti{g}(t)}^2\right)-2\left|\nabla\left|\nabla^{k,\ti{g}(t)}{\eta}\right|_{\ti{g}(t)}^2\right|_{\ti{g}(t)}^2\\
\leq& 2\left|\nabla^{k,\ti{g}(t)}{\eta}\right|_{\ti{g}(t)}^2\cdot\left(-2\left|\nabla^{k+1,\ti{g}(t)}{\eta}\right|_{\ti{g}(t)}^2+C\cdot\left|\nabla^{k,\ti{g}(t)}{\eta}\right|_{\ti{g}(t)}\right)-2\left|\nabla\left|\nabla^{k,\ti{g}(t)}{\eta}\right|_{\ti{g}(t)}^2\right|_{\ti{g}(t)}^2\\
\leq& 2C\cdot\left|\nabla^{k,\ti{g}(t)}{\eta}\right|_{\ti{g}(t)}^3-2\left|\nabla\left|\nabla^{k,\ti{g}(t)}{\eta}\right|_{\ti{g}(t)}^2\right|_{\ti{g}(t)}^2
\end{split}
\end{equation}
where $C=C_k$. Now, take the cut-off function $\rho$ into consideration. By \eqref{Equ: evolution inequality of eta for general k 2} and Condition $(B)$ we have
\begin{equation}\label{Equ: evolution inequality of eta for general k 3}
\begin{split}
&\left(-\Delta_{\ti{g}(t)}\right)\left(\rho^2\left|\nabla^{k,\ti{g}(t)}{\eta}\right|_{\ti{g}(t)}^4\right)\\
=& \rho^2\left(-\Delta_{\ti{g}(t)}\right)\left(\left|\nabla^{k,\ti{g}(t)}{\eta}\right|_{\ti{g}(t)}^4\right)+\left|\nabla^{k,\ti{g}(t)}{\eta}\right|_{\ti{g}(t)}^4\left(-\Delta_{\ti{g}(t)}\right)\left(\rho^2\right)-2\left<\nabla\rho^2, \nabla \left|\nabla^{k,\ti{g}(t)}{\eta}\right|_{\ti{g}(t)}^4\right>_{\ti{g}(t)}\\
\leq& \rho^2\left(C\cdot\left|\nabla^{k,\ti{g}(t)}{\eta}\right|_{\ti{g}(t)}^2-2\left|\nabla\left|\nabla^{k,\ti{g}(t)}{\eta}\right|_{\ti{g}(t)}^2\right|_{\ti{g}(t)}^2\right)+C\cdot \left|\nabla^{k,\ti{g}(t)}{\eta}\right|_{\ti{g}(t)}^4\\
&+C\cdot\rho|\nabla \rho|_{\ti{g}(t)}\left|\nabla^{k,\ti{g}(t)}{\eta}\right|_{\ti{g}(t)}^2\left|\nabla \left|\nabla^{k,\ti{g}(t)}{\eta}\right|_{\ti{g}(t)}^2\right|_{\ti{g}(t)}\\
\leq& C\cdot \left|\nabla^{k,\ti{g}(t)}{\eta}\right|_{\ti{g}(t)}^2+C\cdot \left|\nabla^{k,\ti{g}(t)}{\eta}\right|_{\ti{g}(t)}^4\\
\leq& C\cdot \left|\nabla^{k,\ti{g}(t)}{\eta}\right|_{\ti{g}(t)}^2,\\
\end{split}
\end{equation}
where $C=C_k$. Now, put \eqref{Equ: induction hypothesis for k-1}, \eqref{Equ: evolution inequality of eta for general k-1} and \eqref{Equ: evolution inequality of eta for general k 3} together, we conclude that we can find some $h(t)$ such that on $\hat{K}$
\begin{equation}\label{Equ: inequality for higher order}
\left\{
\begin{aligned}
      &\left|\nabla^{k-1,\ti{g}(t)}{\eta}\right|_{\ti{g}(t)}^2h(t)^{-1}\leq 1,\\
      &\left(-\Delta_{\ti{g}(t)}\right)\left(\left|\nabla^{k-1,\ti{g}(t)}{\eta}\right|_{\ti{g}(t)}^2h(t)^{-1}\right)\leq-2\left|\nabla^{k,\ti{g}(t)}{\eta}\right|_{\ti{g}(t)}^2h(t)^{-1}+1,\\
      &\left(-\Delta_{\ti{g}(t)}\right)\left(\rho^2\left|\nabla^{k,\ti{g}(t)}{\eta}\right|_{\ti{g}(t)}^4h(t)^{-1}\right)\leq C\cdot \left|\nabla^{k,\ti{g}(t)}{\eta}\right|_{\ti{g}(t)}^2h(t)^{-1},\\
\end{aligned}
\right.
\end{equation}
where $C=C_k$. Define an auxiliary function
$$Q:=\rho^2\left|\nabla^{k,\ti{g}(t)}{\eta}\right|_{\ti{g}(t)}^4h(t)^{-1}+C\cdot \left|\nabla^{k-1,\ti{g}(t)}{\eta}\right|_{\ti{g}(t)}^2h(t)^{-1}.$$
Using \eqref{Equ: inequality for higher order}, on $\hat{K}$ we have 
$$\left(-\Delta_{\ti{g}(t)}\right)\left(Q\right)\leq -\left|\nabla^{k,\ti{g}(t)}{\eta}\right|_{\ti{g}(t)}^2h(t)^{-1}+C.$$
Now, at a given time $t\geq 0$, assume $Q$ achieves it's maximum in $U$ at  a point $x_0\in\bar U$. If $x_0\notin \mathrm{Int}\hat K$, then $\rho\equiv 0$, \eqref{Equ: inequality for higher order}, implies that $Q$ has a uniform upper bound $C$, and we are done. Otherwise $x_0\in \mathrm{Int}\hat{K}$, we have 
$$0\leq \left(-\Delta_{\ti{g}(t)}\right)\left(Q\right)(x_0)\leq -\left|\nabla^{k,\ti{g}(t)}{\eta}\right|_{\ti{g}(t)}^2(x_0)h(t)^{-1}+C.$$
which gives 
$$\left|\nabla^{k,\ti{g}(t)}{\eta}\right|_{\ti{g}(t)}^2(x_0)h(t)^{-1}\leq C.$$
Then by Condition $(B)$ and \eqref{Equ: inequality for higher order} we have on $\hat{K}$
$$Q\leq Q(x_0)\leq A_{k}^2\left|\nabla^{k,\ti{g}(t)}{\eta}\right|_{\ti{g}(t)}^2(x_0)h(t)^{-1}+C\cdot \left|\nabla^{k-1,\ti{g}(t)}{\eta}\right|_{\ti{g}(t)}^2(x_0)h(t)^{-1}\leq C.$$
Since $\rho\equiv 1$ on $K$, we obtain
$$\left|\nabla^{k,\ti{g}(t)}{\eta}\right|_{\ti{g}(t)}^2\leq Ch(t)^{\frac{1}{2}}$$
on $K$, where $h(t)$ depends on the constants $A_1, \dots, A_{k+2}, B_K$ and the function $h_0(t)$. This establishes \eqref{Equ: convergence in general} for $k$ and hence completes the proof.
\end{proof}

In applications, it is crucial to have Condition (C). If there is a fibration structure, then we can find such cut-off functions by pulling back a cut-off function from the base manifold, as shown by the following Lemma:

\begin{lemma}\label{Lem:Condition C}
If $f:X^{m+n}\to B$ is a proper holomorphic submersion onto a ball in $\CC^m$, and if $\ti\om(t)$ is of the form $\om_B+e^{-t}\om_0$, where $\om_B$ is a K\"ahler form on $B$ and $\om_0$ is real closed $(1,1)$ form on $X$ whose restriction to each fiber is positive, then for any compact subset $K\subset X$, we can find cut-off function $rho$ satisfying Condition (C) of Lemma \ref{Thm: BIC principle}.
\end{lemma}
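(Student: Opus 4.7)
The plan is to construct $\rho$ as a pullback $f^*\ti\rho$ from the base ball $B\subset\CC^m$ and to verify \eqref{Equ: cutoff bound} by a block-matrix computation, exploiting the fact that a pulled-back function probes only the base-base block of $\ti{g}(t)^{-1}$, which does not degenerate as $t\to\infty$.

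For the construction: since $f$ is proper and $K$ compact, $f(K)\subset B$ is compact. I would choose an open neighborhood $V\supset f(K)$ with $\ov V\subset B$ and a standard smooth cut-off $\ti\rho:B\to[0,1]$ with $\ti\rho\equiv 1$ on $V$ and $\mathrm{supp}\,\ti\rho\subset\ti K\subset\subset B$. Set $\rho:=f^*\ti\rho$ and $\hat K:=f^{-1}(\ti K)$. Then $0\leq\rho\leq 1$, $\rho\equiv 1$ on the open neighborhood $f^{-1}(V)$ of $K$, and $\mathrm{supp}\,\rho\subset\hat K$, which is compact by properness of $f$.

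For the estimate, I cover $\hat K$ by finitely many adapted holomorphic charts $(z,w)$, $z\in\CC^m$, $w\in\CC^n$, in which $f$ is the projection $(z,w)\mapsto z$. In such a chart $\rho(z,w)=\ti\rho(z)$ depends only on $z$, so $\de_a\rho$ and $\de_a\de_{\ov b}\rho$ vanish unless both indices are base directions; consequently
\[
|\nabla\rho|^2_{\ti{g}(t)}=2\sum_{\mathbf{b},\mathbf{b}'}\ti{g}(t)^{\ov{\mathbf{b}'}\mathbf{b}}\,\de_{\mathbf{b}}\ti\rho\,\de_{\ov{\mathbf{b}'}}\ti\rho,\qquad \Delta_{\ti{g}(t)}\rho=2\sum_{\mathbf{b},\mathbf{b}'}\ti{g}(t)^{\ov{\mathbf{b}'}\mathbf{b}}\,\de_{\mathbf{b}}\de_{\ov{\mathbf{b}'}}\ti\rho,
\]
and everything reduces to a uniform bound on the base-base block $\ti{g}(t)^{\ov{\mathbf{b}'}\mathbf{b}}$ on $\hat K\times[0,\infty)$.

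In these coordinates $f^*\om_B$ has only base-base components while $\om_0$ has all blocks, so
\[
\ti{g}(t)=\begin{pmatrix} G_B+e^{-t}A & e^{-t}B \\ e^{-t}B^* & e^{-t}C \end{pmatrix},
\]
where $G_B=(\om_B)_{\mathbf{b}\ov{\mathbf{b}'}}$ is Hermitian positive definite (the base metric) and $C=(\om_0)_{\mathbf{f}\ov{\mathbf{f}'}}$ is Hermitian positive definite (since $\om_0|_{X_z}$ is positive by hypothesis). The Schur complement formula then gives the base-base block of the inverse as
\[
\bigl(\ti{g}(t)^{-1}\bigr)_{\mathbf{b}\mathbf{b}}=\bigl(G_B+e^{-t}(A-BC^{-1}B^*)\bigr)^{-1},
\]
whose argument is continuous in $t\in[0,\infty]$ with limit $G_B$ at $t=\infty$, and is positive definite for every finite $t$ (being the Schur complement of the positive definite Kähler matrix $\ti g(t)$). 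Its image over $\hat K\times[0,\infty]$ is therefore a compact subset of the open cone of Hermitian positive definite matrices, which yields the required uniform upper bound on $\hat K$ and hence the constant $B_K$ of \eqref{Equ: cutoff bound}. The one point to watch—more a checkpoint than a real obstacle—is this uniform positive-definiteness over the whole interval $[0,\infty)$ rather than only for large $t$; the Schur identity is precisely what converts positivity of $\ti g(t)$ and positivity of $\om_0|_{X_z}$ into uniform control after passing to the compactified parameter $[0,\infty]$. The structural reason the argument works is that the fiber-fiber block $e^{-t}C$, where the collapse lives and $\ti g(t)^{-1}$ blows up like $e^t$, never enters the computation because $\rho$ has no fiber dependence.
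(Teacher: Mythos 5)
Your proposal is correct and follows essentially the same route as the paper: pull back a cut-off function from the base via $f$, so that only the base--base block of $\ti{g}(t)^{-1}$ enters the estimate. The paper packages the resulting bound as $\sqrt{-1}\de\rho_0\wedge\dbar\rho_0\le C\om_B$, $-C\om_B\le\ddbar\rho_0\le C\om_B$ together with the asserted trace bound $\tr{\ti{\om}(t)}{\om_B}\le C$, whereas you justify that uniform control explicitly by the Schur-complement computation; this is additional detail, not a different method.
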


\begin{proof}
Since $K$ is compact, so is $f(K)\subset B$. Then we can find a ball $B_1\subset\subset B$ such that $f(K)\subset B_1$. Choose a cut-off function $\rho_0\in C_0^\infty(B)$ such that $\textrm{supp} \rho_0\subset B_1$ and $\rho_0|_{f(K)}\equiv 1$. Then we can take $\rho:=f^*\rho_0$ and $\hat K:=f^{-1}(\overline{B_1})$.
Since
$$\sqrt{-1}\partial\rho_0\wedge\dbar\rho_0\leq C\om_B,\qquad -C\om_B\leq\ddbar\rho_0\leq C\om_B,$$ 
for some $C>0$ on $B$,
we have on $\hat{K}$
\[\begin{split}
&|\nabla\rho|_{\ti{\om}(t)}^2=\sum_{i,j=1}^{m}\ti g(t)^{i\bar{j}}\partial_i\rho\partial_{\bar{j}}\rho\leq C,\\
&|\Delta_{\ti{\om}(t)}\rho|=\left|\tr{\ti{\om}(t)}{\ddbar\rho}\right|\leq C\tr{\ti{\om}(t)}{\om_B}\leq C\\
\end{split}
\]
with some constant $C$ depending on the domain $\hat K$. This verifies Condition $(C)$.
\end{proof}


\section{Applications to collapsing Calabi-Yau metrics}\label{sec:CY degeneration}

\subsection{Metric and curvature convergence on locally trivial Calabi-Yau fibrations}

We recall basic definitions in the general fibration case. Let $X$ be a compact K\"ahler $(n+m)$-manifold with $c_1(X)=0$ and let $\om_X$ be a Ricci-flat K\"ahler metric on $X$. Let $f:X\to Z$ be the fibration map with $B=f(X)$. Write $\chi=f^*\omega_Z$, where $\om_Z$ is a smooth K\"ahler form on $Z$. Then $\chi$ is a smooth nonnegative $(1,1)$ form on $X$, and we will also write $\chi$ for the restriction of $\omega_Z$ to $B$.  Note that
$\int_{B\backslash S'}\chi^m$ is finite.

We define a semi Ricci-flat form $\of$ on $X\backslash S$ in the usual way. Namely, for each $b \in B \backslash S'$ there is a smooth function $\rho_b$ on $X_b$ so that $\om_X|_{X_b} + \ddbar \rho_b = \om_{\textrm{SRF,b}}$ is Ricci-flat, normalized by $\int_{X_b} \rho_b (\om_X|_{X_b})^n=0$.  As $b$ varies, this defines a smooth function $\rho$ on $X\backslash S$ and we define
$\of = \om_X + \ddbar \rho.$

Let $F$ be the function on $X\backslash S$ given by
$$F=\frac{\om_X^{n+m}}{\binom{n+m}{n}\of^n\wedge\chi^m}.$$
It is easy to see that $F$ is constant along the fibers $X_b$, $b\in B\backslash S'$, so it descends to a smooth function, also denoted by $F$, on $B\backslash S'$.  We see that $F$ satisfies $\int_{B\backslash S'}F\chi^m=\int_X\om_X^{n+m}/\binom{n+m}{n}\int_{X_y}\om_X^n$ (see \cite[Section 3]{ST2} and \cite[Section 4]{To}). Here note that $\int_{X_b}\om_X^n$ is independent of $b\in B\backslash S'$.

Then \cite[Section 3]{ST2} shows that the Monge-Amp\`ere equation
\begin{equation} \label{Equ: MA of GKE on the base}
(\chi+\ddbar v)^m=\frac{\binom{n+m}{n}\int_{X} \om_X^n\wedge\chi^m}{\int_{X}\om_X^{n+m}}F\chi^m,
\end{equation}
has a unique solution $v$ which is a bounded $\chi$-plurisubharmonic function on $B$, smooth on $B\backslash S'$, with $\int_{X}v\omega_X^{n+m}=0$, where here and henceforth we write $v$ for $\pi^*v$.

Define
$$\om_B=\chi+\ddbar v,$$
for $v$ solving (\ref{Equ: MA of GKE on the base}).  Note that we have
\begin{equation}\label{Equ: ct3}
\of^n\wedge\om_B^m=\frac{\binom{n+m}{n}\int_{X} \om_X^n\wedge\chi^m}{\int_{X}\om_X^{n+m}} F\of^n\wedge\chi^m=\frac{\int_{X} \om_X^n\wedge\chi^m}{\int_{X}\om_X^{n+m}}\om_X^{n+m}.
\end{equation}
Moreover, $\om_B$ is a smooth K\"ahler metric on $B\backslash S'$, and satisfies
$$\Ric(\om_B)=\om_{{\rm WP}},$$
where $\om_{{\rm WP}}$ is the semipositive Weil-Petersson form on $B\backslash S'$, characterizing the change of complex structures of the fibers. If on a domain $U\subset B\backslash S'$ the bundle is holomorphically trivial, then we have $\om_{{\rm WP}}\equiv 0$ on $U$.

For $t\geq 0$, let $\om_t$ be the global reference metrics defined by
$$\om_t=\chi+e^{-t}\omega_X \in \alpha_t = [\chi] + e^{-t} [\om_X],$$ 
 and let $\om(t)=\om_t+\ddbar\vp(t)$ be the unique Ricci-flat K\"ahler metric on $X$ cohomologous to $\om_t$, with the normalization $\int_X\vp\omega_X^{n+m}=0$. Then $\om(t)$ solves the Calabi-Yau equation
\begin{equation}\label{Equ: MA of collapsing CY}
(\om(t))^{n+m}=c_t e^{-nt}\omega_X^{n+m},
\end{equation}
where $c_t$ is the constant given by
\begin{equation}\label{Equ: c_t}
c_t=\frac{\int_X e^{nt}\om_t^{n+m}}{\int_X\om_X^{n+m}}=\frac{1}{\int_X\om_X^{n+m}}\sum_{k=0}^m\binom{n+m}{k}e^{-(m-k)t}\int_X \om_X^{n+m-k}\wedge\chi^k=\binom{n+m}{n} \frac{\int_X\om_X^{n}\wedge\chi^m }{\int_X\om_X^{n+m}}+O(e^{-t}),
\end{equation}
which has a positive limit as $t \to \infty$.

Now, define the reference metrics $\ti{\om}_t$ on the regular part $X\backslash S$ by 
\begin{align}
\ti{\om}_t=\om_B+e^{-t}\of.
\end{align}
In \cite{TWY}, Tosatti-Weinkove-Yang proved the following $C^0$ convergence theorem:
\begin{theorem}[Tosatti-Weinkove-Yang \cite{TWY}]\label{Thm: convergence of TWY}
Let $\om=\om(t) \in \alpha_t$ be Ricci-flat K\"ahler metrics on $X$ as described above.  Then the following holds:
For each compact set $K \subset X \backslash S$,
\begin{equation}\label{Equ: 0-th convergence of TWY}
\|\om(t)-\ti{\om}_t\|_{C^0(K,\ti{\om}_t)} \rightarrow 0, \quad \textrm{as } t\rightarrow \infty.
\end{equation}
In particulr, if S=$\emptyset$, then the convergence is global and exponentially fast.
\end{theorem}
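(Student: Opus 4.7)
The plan is to follow the Tosatti-Weinkove-Yang strategy, combining a uniform $L^\infty$ bound on the Kähler potential with a second-order trace estimate and volume-form asymptotics, then interpolating to pass from potential convergence to metric convergence. First, writing $\om(t) = \om_t + \ddbar \vp(t)$ with the normalization $\int_X \vp \om_X^{n+m} = 0$, I would apply a pluripotential $L^\infty$ estimate of Kolodziej / Eyssidieux-Guedj-Zeriahi type, adapted to the degenerating reference family $\om_t = \chi + e^{-t}\om_X$ and the Monge-Ampère equation \eqref{Equ: MA of collapsing CY}, to obtain $\|\vp(t)\|_{L^\infty(X)} \leq C$ uniformly in $t$. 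Using $\om_B = \chi + \ddbar v$ and $\of = \om_X + \ddbar \rho$ with $v$ and $\rho$ locally bounded on $X\setminus S$, the relative potential $\psi(t) := \vp(t) - v - e^{-t}\rho$ then satisfies $\om(t) = \ti\om_t + \ddbar \psi(t)$ and $\|\psi(t)\|_{L^\infty(K)} \leq C_K$ on any compact $K \subset X \setminus S$.

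Next, a standard Aubin-Yau/Chern-Lu maximum principle applied to an auxiliary quantity of the form $\log \tr{\ti\om_t}{\om(t)} - A \psi(t)$, with $A$ chosen sufficiently large to absorb the bisectional-curvature terms of $\ti\om_t$ (which are bounded on compact subsets of the regular locus), yields the trace estimate $\tr{\ti\om_t}{\om(t)} \leq C_K$ on $K$. In parallel, I would extract the volume-form asymptotics: expanding $\ti\om_t^{n+m} = (\om_B + e^{-t}\of)^{n+m}$ and using $\om_B^{m+1} = 0$ (since $\om_B$ is pulled back from an $m$-dimensional base), the only term of order $e^{-nt}$ is $\binom{n+m}{m} e^{-nt}\om_B^m \wedge \of^n$, which by \eqref{Equ: ct3} equals $\binom{n+m}{n} \frac{\int_X \om_X^n \wedge \chi^m}{\int_X \om_X^{n+m}} e^{-nt}\om_X^{n+m}$. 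Comparing with \eqref{Equ: MA of collapsing CY} and the asymptotics \eqref{Equ: c_t}, this yields $\om(t)^{n+m}/\ti\om_t^{n+m} = 1 + O(e^{-t})$ uniformly on $K$. Combined with the trace bound, this gives uniform equivalence $c_K \ti\om_t \leq \om(t) \leq C_K \ti\om_t$ on $K$.

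Finally, to upgrade uniform equivalence to genuine $C^0$ convergence, I would first show $\|\psi(t)\|_{C^0(K)} \to 0$ using a pluripotential argument that exploits the $L^\infty$ bound on $\psi(t)$ together with the fact that the Monge-Ampère masses $\om(t)^{n+m}$ and $\ti\om_t^{n+m}$ are uniformly close on $K$. Then Evans-Krylov applied to the Monge-Ampère equation for $\psi(t)$ relative to $\ti\om_t$, which is uniformly elliptic by the trace bound, provides uniform $C^{2,\alpha}(K', \ti\om_t)$ bounds for $\psi(t)$ on $K' \subset\subset K$; interpolating these against $\|\psi(t)\|_{C^0} \to 0$ gives $\|\ddbar \psi(t)\|_{C^0(K',\ti\om_t)} \to 0$, i.e. $\|\om(t) - \ti\om_t\|_{C^0(K',\ti\om_t)} \to 0$. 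The main obstacle is precisely this last step: upgrading the $L^\infty$ potential bound to uniform convergence of $\psi(t)$ in the degenerating setting, since $\ti\om_t$ becomes singular near $S$ and the pluripotential estimates must be localized compact-by-compact on $X\setminus S$. In the case $S = \emptyset$, the $O(e^{-t})$ rate in the volume-form expansion is global and propagates through the potential and interpolation steps to give exponential convergence throughout $X$.
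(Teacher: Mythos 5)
This statement is quoted from Tosatti--Weinkove--Yang \cite{TWY}; the paper under review does not reprove it, so there is no internal proof to compare against and your proposal has to be judged on its own terms. The first three quarters of it are sound and do follow the \cite{To}/\cite{TWY} line: the uniform $L^\infty$ bound on $\vp(t)$, the identity $\om(t)-\ti\om_t=\ddbar\psi(t)$ with $\psi=\vp-v-e^{-t}\rho$, the Chern--Lu trace bound $\tr{\ti\om_t}{\om(t)}\le C_K$, and the volume-form computation $\om(t)^{n+m}/\ti\om_t^{n+m}=1+O(e^{-t})$ (using $\om_B^{m+1}=0$ and \eqref{Equ: ct3}) giving local uniform equivalence $c_K\ti\om_t\le\om(t)\le C_K\ti\om_t$ are all correct and are exactly the prior estimates that \cite{TWY} builds on.

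The final step, however, contains a genuine gap, and it is precisely the step that makes \eqref{Equ: 0-th convergence of TWY} a theorem rather than a soft consequence of the potential estimates. Interpolating $\|\psi(t)\|_{C^0(K)}\to 0$ against a uniform $C^{2,\alpha}(K,\ti\om_t)$ bound does \emph{not} give $\|\ddbar\psi(t)\|_{C^0(K,\ti\om_t)}\to 0$, because the interpolation constant degenerates as $\ti\om_t$ collapses: the injectivity radius of $\ti\om_t$ in the fiber directions is of order $e^{-t/2}$, and the constant in $\|D^2u\|_{C^0(B_{r/2})}\le C r^{-2}\|u\|_{C^0(B_r)}+\dots$ blows up like $e^{t}$ on balls of that scale. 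Concretely, take local fiber coordinates $y$ and $\psi_t=e^{-t}\sin(\mathrm{Re}\,y)$: then $\|\psi_t\|_{C^0}\to 0$ and $\|\psi_t\|_{C^{2,\alpha}(\ti\om_t)}$ is bounded, yet $|\ddbar\psi_t|_{\ti\om_t}\sim e^{-t}\cdot e^{t}=1$ does not tend to zero. So the implication you invoke is false as stated, and no choice of $\theta$ in the interpolation saves it unless $\|\psi\|_{C^0}$ decays faster than the collapse rate. What \cite{TWY} actually does to close this gap is quite different: they prove a new Calabi-type third-order estimate $\sup_K|\nabla^{\ti g_t}g(t)|^2_{g(t)}\le C_K$, combine it with the fiberwise statement that $e^{t}\om(t)|_{X_b}\to\om_{\mathrm{SRF},b}$ in $C^\infty(X_b)$ (obtained by rescaling and applying elliptic estimates on the fixed fibers, which rules out exactly the oscillation in the counterexample above), and then treat the fiber-fiber, mixed, and base-base components of $g(t)-\ti g_t$ by separate arguments using the Monge--Amp\`ere equation and the trace bounds. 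Your proposal would need to be replaced by this (or an equivalent) mechanism in its last paragraph.
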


In \cite{HT2}, Hein-Tosatti obtained higher-order estimate  when the smooth fibers are pairwise bi-holomorphic:

\begin{theorem}[Hein-Tosatti \cite{HT2}]\label{Thm: bounded estimate for CY of HT}
Assume that all the fibers  $X_b$ $(b\in U\subset B\setminus f(S))$ are bi-holomorphic to the same Calabi-Yau manifold $Y$. Over any small coordinate ball $U$ compactly contained in $B\setminus f(S)$, use \cite{FG} to trivialize $f$ holomorphically to a product $U\times Y\to U$. Define another Ricci-flat reference K\"ahler forms on $U\times Y$ by $\hat\om(t)=\om_{\mathbb{C}^m}+e^{-t}\om_Y$. Then for any $k \in \N$, there exists a constant $C_{U,k}$ such that
\begin{equation}\label{Equ: bounded estimate for CY of HT}
\|\om(t)\|_{C^k(U\times Y,\hat\om(t))}\leq C_{U,k}.
\end{equation}
holds uniformly for all $t \in [0,\infty)$.
\end{theorem}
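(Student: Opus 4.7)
The plan is to follow Hein--Tosatti's iterated blow-up and contradiction scheme, proving the $C^k$ bound by induction on $k$, with the base case essentially provided by Theorem~\ref{Thm: convergence of TWY}.

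For the base case $k=0$, I would first check that on $U\times Y$ the holomorphic triviality of the fibration, together with the uniqueness of the Ricci-flat metric on each fiber $\{b\}\times Y$ in class $[\om_X|_{X_b}]$, forces $\of|_{X_b}\equiv\om_Y$ up to a globally smooth bounded correction on $U\times Y$, and that $\om_B$ is a smooth K\"ahler form on $U$ uniformly $C^\infty$-equivalent to $\om_{\mathbb{C}^m}$ on compact subsets. Hence $\ti\om_t=\om_B+e^{-t}\of$ is uniformly $C^\infty$-comparable to $\hat\om(t)$ with all covariant derivatives of the transition bounded. Combining this with TWY's $C^0(\ti\om_t)$-convergence of $\om(t)$ to $\ti\om_t$ on compact subsets yields $\frac{1}{2}\hat\om(t)\le\om(t)\le 2\hat\om(t)$ uniformly for $t$ large, which is the $k=0$ statement.

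For the inductive step, suppose the uniform $C^{k-1}$ bound holds and assume for contradiction that $Q_i:=\sup_{U'\times Y}\|\om(t_i)\|_{C^k(U'\times Y,\hat\om(t_i))}\to\infty$ along some sequence $t_i\to\infty$ and some sub-ball $U'\subset\subset U$. By a standard point-picking argument, choose almost-maximum points $p_i=(b_i,y_i)$. Now rescale: dilate in the base direction by a factor $\lambda_i$ (determined by $Q_i$) so that the rescaled $C^k$ norm is of order one at the new origin, and introduce local fiber coordinates $\zeta=e^{t_i/2}w$ near $y_i$ so that $e^{-t_i}\om_Y$ becomes comparable to the standard flat form at unit scale. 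The rescaled metrics $\lambda_i^{-2}\om(t_i)$ satisfy rescaled Calabi--Yau equations on larger and larger domains, with uniformly bounded $C^{k-1}$ norms from the induction hypothesis. Standard elliptic extraction (Cheeger--Gromov plus local Schauder/Evans--Krylov theory for Monge--Amp\`ere) then yields a smooth Ricci-flat K\"ahler limit $\om_\infty$ on a limit ambient space of the form $\mathbb{C}^m\times\wt Y$ (where $\wt Y$ is either $Y$ itself or a universal cover, depending on which regime the rescaling falls in), uniformly equivalent to the flat product background, with $\|\om_\infty\|_{C^k}=1$ at the origin. A Liouville-type rigidity statement for Ricci-flat K\"ahler metrics on such products with bounded geometry then forces $\om_\infty$ to be a pullback from the fiber equal to $\om_Y$ up to an affine change in the base, and its $C^k$ norm at the origin must vanish, a contradiction.

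The main obstacle is the rigidity step and the fact that a single blow-up need not produce a limit model whose $C^k$ norm concentrates in the ``right'' direction: the degeneracy can be in the base, in the fiber, or mixed, and some components of $\om(t)$ are already pinned by the Tosatti--Zhang partial Cheeger--Gromov estimate while others are not. This is precisely why Hein--Tosatti iterate the blow-up, rescaling within a rescaling to isolate the worst-degenerating direction at each stage and reducing to a rigid flat limit. The holomorphic triviality of the fibration over $U$ is essential throughout, since without it the limit model could depend non-trivially on the base and the Liouville step would fail; it is what produces the uniform, as opposed to merely $C^\alpha$, higher-order estimate.
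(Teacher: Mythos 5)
You should first note that the paper does not prove this statement at all: it is quoted verbatim as Theorem~\ref{Thm: bounded estimate for CY of HT} from Hein--Tosatti \cite{HT2}, and the authors only use it as a black box (via Corollary~\ref{Cor: bounded estimate on local base 2}) to verify Condition (B) of Lemma~\ref{Thm: BIC principle}. So there is no in-paper argument to compare against; what you have written is a high-level summary of the strategy of \cite{HT2} itself. As a summary it is faithful -- the base case via Theorem~\ref{Thm: convergence of TWY} is correct (and in fact $\of|_{X_b}=\om_Y$ exactly, by \eqref{Equ: identity of reference metric on Y}, not merely up to a bounded correction) -- but as a proof it has two genuine gaps, both located exactly where you write ``standard.''

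First, ``standard elliptic extraction (Cheeger--Gromov plus local Schauder/Evans--Krylov)'' does not apply to a collapsing family: the reference metrics $\hat\om(t_i)=\om_{\CC^m}+e^{-t_i}\om_Y$ have fiber injectivity radius $\sim e^{-t_i/2}\to 0$, so no pointed smooth limit exists until one stretches fiber coordinates by $e^{t_i/2}$ (passing to the universal cover of $Y$), and this stretching interacts with the blow-up factor $\lambda_i$ coming from the $C^k$ norm. Depending on the relative size of $\lambda_i$ and $e^{t_i/2}$ one lands in (at least) three distinct regimes with three distinct limit models -- flat $\CC^{m+n}$, a product $\CC^m\times Y$ as in Hein's Liouville theorem \cite{He2}, or an intermediate cylinder -- each requiring its own rigidity statement. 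You name this obstacle in your last paragraph but do not resolve it, and resolving it is essentially the entire content of \cite{HT2}. Second, the induction does not close as set up: after rescaling by $\lambda_i\to\infty$ the $C^{k-1}$ information from the induction hypothesis degenerates (the rescaled lower-order norms tend to $0$), so the blow-up limit is not automatically nontrivial, and the concluding contradiction ``its $C^k$ norm at the origin must vanish'' is unjustified -- the $k$-th derivative can concentrate at a scale invisible in the limit, or cancel against the reference metric. Hein--Tosatti handle this by working with H\"older seminorms, decomposing $\om(t)$ into a fiberwise average plus an oscillating part, and running a non-cancellation argument for each piece; without some version of that decomposition the single point-picking argument you describe fails. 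If your intent is to cite \cite{HT2} rather than reprove it, that is exactly what the paper does, and your sketch is a reasonable gloss; if the intent is a self-contained proof, these two steps must be supplied.
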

Here $\om_Y$ is defined as follows. Let $f:X\to B$ be as in Theorem \ref{Thm: bounded estimate for CY of HT}. By \cite{FG}, $f$ is a holomorphic fiber bundle over $U$. Fix any small coordinate ball in $U$ over which this holomorphic fiber bundle is trivial. We may assume that $U$ is a ball in $\mathbb{C}^m$ and $f:U\times Y\to U$ is the projection, with $Y=X_b$ a compact Calabi-Yau manifold. Then we need to apply a gauge transformation. By \cite[Prop 3.1]{He2} (cf. \cite[Prop 3.1]{GTZ}, \cite[Lemma 4.1]{GW}, \cite[Claim 1, p.382]{He},  \cite[p.2936--2937]{TZ}), there is a unique Ricci-flat K\"ahler metric $\om_Y$ on $Y$ such that, we can find a bi-holomorphism $T$ of $U\times Y$ (over $U$) such that $T^*\om_X=S^*\om_{\mathbb{C}^m}+\om_Y$ for some $S\in \mathrm{GL}(m,\mathbb{C})$. Note that \cite[Prop 3.1]{He2} is stated with $U=\mathbb{C}^m$, but the proof applies also if $U$ is a ball in $\mathbb{C}^m$. Let us also note that $T$ takes the form $T(z,y) =  (z, y + \sigma(z))$, where $\sigma$ is a holomorphic function from $U$ to the space of $g_Y$-parallel $(1,0)$-vector fields on $Y$, and where the addition $y+\sigma(z)$ has the same meaning as in \cite[(1.1)]{He2}. 

We should note that for each $b\in U$ the metrics $\om_{\textrm{SRF,b}}$ and $\om_Y$ are in the same K\"ahler class and both are Ricci-flat, so they are equal by the uniqueness part of Calabi-Yau theorem, i.e., we have
\begin{equation}\label{Equ: identity of reference metric on Y}
\om_{\textrm{SRF,b}}=\om_Y,
\end{equation}
on $Y=X_b$ for all $b\in U$.  

Before we prove Theorem \ref{Thm: convergence of higher order estimate for CY}, we need some useful lemmas.

\begin{lemma}\label{Lem: bounded estimate between reference metric}
Let $(Y,\om_Y)$ be a K\"ahler manifold and $B$ the unit ball in $\mathbb{C}^m$ ($m\geq 1$). Let $\om_1, \om_2$ be any two K\"ahler metrics on $B$, and define two families of product metrics $\hat\om(t)$ and $\ti\om(t)$ for $t\in [0,\infty)$ on $X=B\times Y$ as
\begin{equation}\label{}
\left\{
\begin{aligned}
      &\hat\om(t)=\om_1+e^{-t}\om_Y,\\
      &\ti\om(t)=\om_2+e^{-t}\om_Y,\\
\end{aligned}
\right.
\end{equation}
Then
\begin{equation}\label{Equ: bounded estimate between reference metric}
\|\hat\om(t)\|_{C^k(X,\ti{\om}(t))}\leq C_k.
\end{equation}
for all $k\geq 0$, where the constant $C_k$ depends only on $\om_1$ and $\om_2$, but independent of $t$.
\end{lemma}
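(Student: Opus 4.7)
The plan is to exploit the fact that both metric families are block-diagonal products, with only the fiber block scaled by $e^{-t}$. In a product chart $(z^i,y^\alpha)$ on $B\times Y$, the components of $\om_Y$ are independent of $z^i$ and the components of $\om_1,\om_2$ are independent of $y^\alpha$, so a direct computation of the Christoffel symbols of the Chern connection of $\ti{\om}(t)$ shows that all mixed base/fiber components vanish, the purely base symbols coincide with those of $\om_2$ (the $e^{-t}$ factor in the fiber block never enters), and the purely fiber symbols coincide with those of $\om_Y$. Hence $\nabla^{\ti{\om}(t)}$ is the direct sum of $\nabla^{\om_2}$ and $\nabla^{\om_Y}$, is independent of $t$, and preserves the decomposition of tensors into base and fiber parts.

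For $k=0$ I would estimate $|\hat{\om}(t)|_{\ti{\om}(t)}^2$ directly in the product chart. The base contribution $\om_2^{i\bar\ell}\om_2^{k\bar j}(\om_1)_{i\bar j}(\om_1)_{k\bar\ell}$ depends only on $\om_1$ and $\om_2$; the fiber contribution acquires two factors of $e^t$ from raising fiber indices and two factors of $e^{-t}$ from lowering with the metric, and these cancel to give a dimension constant depending only on $\om_Y$. For $k\geq 1$ I would use $\nabla^{\ti{\om}(t)}\ti{\om}(t)=0$ to reduce the estimate to
\[
\nabla^{k,\ti{\om}(t)}\hat{\om}(t)=\nabla^{k,\ti{\om}(t)}\bigl(\hat{\om}(t)-\ti{\om}(t)\bigr)=\nabla^{k,\ti{\om}(t)}\bigl(\pi_B^{*}(\om_1-\om_2)\bigr),
\]
where $\pi_B\colon B\times Y\to B$ is the projection. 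Because $\om_1-\om_2$ depends only on $z^i$ and the connection splits as above, every covariant derivative of $\pi_B^{*}(\om_1-\om_2)$ carrying any fiber index vanishes, and the surviving components are exactly the $\nabla^{\om_2}$-covariant derivatives of $\om_1-\om_2$ pulled back from $B$. The $\ti{\om}(t)$-norm of such a tensor, whose indices are all in the base direction, contracts only against the base block of $\ti{\om}(t)$, which is $\om_2$, so
\[
\bigl|\nabla^{k,\ti{\om}(t)}\hat{\om}(t)\bigr|_{\ti{\om}(t)}=\bigl|\nabla^{k,\om_2}(\om_1-\om_2)\bigr|_{\om_2},
\]
which is independent of $t$ and depends only on $\om_1$ and $\om_2$.

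The only genuine bookkeeping is to verify that the non-diagonal Christoffel symbols of $\ti{\om}(t)$ indeed vanish (immediate from the block-diagonal form and the mutual coordinate-independence of the two blocks) and to track the $e^{\pm t}$ cancellations that arise when raising and lowering fiber indices. I do not expect any analytic obstacle: once the splitting of the connection is recorded, the whole statement reduces to elementary linear algebra, with every power of $e^{\pm t}$ canceling out by the homogeneity of the norm under a simultaneous rescaling of the fiber metric.
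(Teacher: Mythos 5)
Your proposal is correct and follows essentially the same route as the paper: both work in product coordinates, observe that the Christoffel symbols of $\ti\om(t)$ are block-diagonal with all $e^{\pm t}$ factors cancelling so that $\nabla^{\ti\om(t)}$ splits as $\nabla^{\om_2}\oplus\nabla^{\om_Y}$, and reduce $\nabla^{k,\ti\om(t)}\hat\om(t)$ to the pullback of a fixed base tensor whose $\ti\om(t)$-norm equals its $\om_2$-norm. Your device of first subtracting $\ti\om(t)$ is a harmless cosmetic shortcut, since for $k\geq 1$ the paper's tensor $f^*\bigl(\nabla^{k,\om_2}\om_1\bigr)$ coincides with your $f^*\bigl(\nabla^{k,\om_2}(\om_1-\om_2)\bigr)$.
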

\begin{proof}
Denote by $f$ the projection $B\times Y\to B$, and compute under product coordinates. We prove by induction that:
\begin{equation}\label{Equ: gradient of product}
\nabla^{k,\ti{g}(t)}{\hat{g}(t)}=f^*(\alpha_k),
\end{equation}
where $\alpha_k=\nabla^{k,g_1}{g_2}$ is a well-defined covariant tensor on the base space $B$. Indeed, since $\hat\om(t)$ is a product metric, we have
\begin{equation}\label{Equ: Christoffel of product metric hat}
\Gamma(\hat{g}(t))^{k}_{ip}=
\left\{
       \begin{aligned}
       &(g_1)^{k\bar{l}}\nabla^{E}_i(g_1)_{p\bar{l}}, ~~ i,k,p\in \mathbf{b},\\
       &(g_Y)^{k\bar{l}}\nabla^{E}_i(g_Y)_{p\bar{l}}, ~~ i,k,p\in \mathbf{f},\\
       &0, ~~otherwise.\\
       \end{aligned}
\right.
\end{equation}
and similarly
\begin{equation}\label{Equ: Christoffel of product metric tilde}
\Gamma(\ti{g}(t))^{k}_{ip}=
\left\{
       \begin{aligned}
       &(g_2)^{k\bar{l}}\nabla^{E}_i(g_2)_{p\bar{l}}, ~~ i,k,p\in \mathbf{b},\\
       &(g_Y)^{k\bar{l}}\nabla^{E}_i(g_Y)_{p\bar{l}}, ~~ i,k,p\in \mathbf{f},\\
       &0, ~~otherwise.\\
       \end{aligned}
\right.
\end{equation}
So $\hat{g}(t)^{k\bar{l}}\nabla^{\ti{g}(t)}_i\hat{g}(t)_{p\bar{l}}$ is nonzero only if $i,k,p\in \mathbf{b}$, and when $i,k,p\in \mathbf{b}$, we have
\begin{equation}\label{}
\hat{g}(t)^{k\bar{l}}\nabla^{\ti{g}(t)}_i\hat{g}(t)_{p\bar{l}}=(g_1)^{k\bar{l}}\nabla^{E}_i(g_1)_{p\bar{l}}-(g_2)^{k\bar{l}}\nabla^{E}_i(g_2)_{p\bar{l}}=f^*\left((g_1)^{k\bar{l}}\nabla^{g_2}_i(g_1)_{p\bar{l}}\right),
\end{equation}
which gives that
$$\nabla^{\ti{g}(t)}_i\hat{g}(t)_{k\bar{l}}=f^*\left(\nabla^{g_2}_i(g_1)_{k\bar{l}}\right).$$
This is also true for all directions, and hence establishes \eqref{Equ: gradient of product} for $k=1$.

Now assume that we have \eqref{Equ: gradient of product} for $1, 2, \dots, k-1$ with $k\geq 2$. Then we have 
$$\nabla^{k-1,\ti{g}(t)}\hat{g}(t)=f^*\left(\alpha_{k-1}\right),$$
with $\alpha_{k-1}=\nabla^{k-1,g_2}(g_1)$ which is a covariant tensor. Then $f^*\left(\alpha_{k-1}\right)_{i_2, \dots, i_{k+2}}$ is nonzero only if $i_2, \dots, i_{k+2}$ are all of $\mathbf{b}$ or $\overline{\mathbf{b}}$ drections. Now suppose $i_1$ is of the $\mathbf{b}$ or $\mathbf{f}$ directions, then we have
$$\nabla^{\ti{g}(t)}_{i_1}f^*\left(\alpha_{k-1}\right)_{i_2, \dots, i_{k+2}}=\nabla^{E}_{i_1}f^*\left(\alpha_{k-1}\right)_{i_2, \dots, i_{k+2}}-\sum_{2\leq j\leq k+2,i_j\in \mathbf{b}}\Gamma\left(\ti{g}(t)\right)^{p}_{i_1i_j}f^*\left(\alpha_{k-1}\right)_{i_2, \dots, i_{j-1}, p, i_{j+1}, \dots,i_{k+2}}.$$
If $i_1$ is of the $\mathbf{f}$ directions, then since $f^*\left(\alpha_{k-1}\right)_{i_2, \dots, i_{k+2}}$ is a function only depends on the base invariant, and hence $\nabla^{E}_{i_1}f^*\left(\alpha_{k-1}\right)_{i_2, \dots, i_{k+2}}\equiv 0$, and using \eqref{Equ: Christoffel of product metric tilde} we have $\Gamma(\ti{g}(t))^{p}_{i_1i_j}\equiv 0$ since $i_j\in \mathbf{b}$. Hence this covariant derivative is nonzero only if $i_1$ is of the $\mathbf{b}$ directions, and the second terms is nonzero only if $p$ is of the $\mathbf{b}$ directions. Using \eqref{Equ: Christoffel of product metric tilde} again we obtain
$$\nabla^{\ti{g}(t)}_{i_1}f^*\left(\alpha_{k-1}\right)_{i_2, \dots, i_{k+2}}=f^*\left(\nabla^{g_2}_{i_1}\left(\alpha_{k-1}\right)_{i_2, \dots, i_{k+2}}\right).$$
This is also true for all directions. The case for $i_1$ of the $\overline{\mathbf{b}}$ or $\overline{\mathbf{f}}$ directions is similar. Using induction, we obtain \eqref{Equ: gradient of product}.

Now, we have on $X$  the estimate
$$\left|\nabla^{k,\ti{g}(t)}{\hat{g}(t)}\right|_{\ti\om(t)}=\left|f^*(\alpha_k)\right|_{\ti\om(t)}=\left|\alpha_k\right|_{\om_2}\leq C_k.$$
where the constant is independent the time $t$. This completes the proof of the Lemma.
\end{proof}

Next, we compare covariant derivatives of two K\"ahler metrics.

\begin{lemma}\label{Lem: relation between two metrics}
Let $X$ be a K\"ahler manifold. Let $\hat\om, \ti\om$ be any two K\"ahler metrics on $X$ and $\alpha$ be any tensor on $X$. Then we have for any $k\geq 1$
\begin{equation}\label{Equ: relation between three metrics}
\nabla^{k,\ti g}{\alpha}=\sum_{j\geq 1, i_1+\dots +i_j=k,i_1,\dots ,i_j\geq 0}\nabla^{i_1,\hat{g}}{\beta}*\dots*\nabla^{i_j,\hat{g}}{\beta}.
\end{equation}
where $\beta$ means either the metric $\ti{g}$ or the tensor $\alpha$, and $*$ denotes the tensor contraction by $\ti{g}$.
\end{lemma}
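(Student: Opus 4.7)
The plan is to express the difference $\nabla^{\ti g} - \nabla^{\hat g}$ as a tensor built from $\ti g^{-1}$ and $\nabla^{\hat g} \ti g$, and then iterate via Leibniz. Working in local holomorphic coordinates, the K\"ahler Chern connections satisfy $\Gamma(g)^k_{ij} = g^{k\bar l} \partial_i g_{j\bar l}$, so the standard identity
\[
\partial_i \ti g_{j\bar l} \;=\; \nabla^{\hat g}_i \ti g_{j\bar l} + \hat\Gamma^{p}_{ij}\, \ti g_{p\bar l}
\]
yields
\[
T^k_{ij} \;:=\; \Gamma(\ti g)^k_{ij} - \Gamma(\hat g)^k_{ij} \;=\; \ti g^{k\bar l} \nabla^{\hat g}_i \ti g_{j\bar l},
\]
which is a genuine tensor of the schematic form $\ti g^{-1} * \nabla^{\hat g} \ti g$. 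Consequently, for any tensor $\alpha$,
\[
\nabla^{\ti g} \alpha \;=\; \nabla^{\hat g} \alpha \;+\; T * \alpha,
\]
which is exactly the claimed formula for $k=1$ (with $j=1, i_1=1, \beta = \alpha$, together with the $j = 3$ monomial $\ti g^{-1} * \nabla^{\hat g}\ti g * \alpha$ produced by $T * \alpha$).

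Next I would proceed by induction on $k$. Assuming the identity for $k-1$, I write
\[
\nabla^{k,\ti g} \alpha \;=\; \nabla^{\ti g}\bigl(\nabla^{k-1,\ti g}\alpha\bigr) \;=\; \nabla^{\hat g}\bigl(\nabla^{k-1,\ti g}\alpha\bigr) \;+\; T * \nabla^{k-1,\ti g} \alpha,
\]
and expand $\nabla^{k-1,\ti g}\alpha$ via the inductive hypothesis as a sum of monomials $\nabla^{i_1,\hat g} \beta * \cdots * \nabla^{i_j,\hat g} \beta$ (contractions by $\ti g$) with $i_1 + \cdots + i_j = k-1$. Applying $\nabla^{\hat g}$ to such a monomial by Leibniz produces two types of terms: either some factor $\nabla^{i_s,\hat g}\beta$ is replaced by $\nabla^{i_s+1,\hat g}\beta$, raising the total weight by one; or the implicit $\ti g^{-1}$'s from the $*$-contractions get differentiated, using the identity
\[
\nabla^{\hat g}(\ti g^{-1}) \;=\; -\,\ti g^{-1} * \nabla^{\hat g}(\ti g) * \ti g^{-1},
\]
which inserts an additional factor $\nabla^{\hat g}\ti g$ (that is, a new $\beta$ with index one), again raising the total weight by exactly one. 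The remaining contribution $T * \nabla^{k-1,\ti g}\alpha$ simply tacks on a factor $T = \ti g^{-1} * \nabla^{\hat g}\ti g$, also of the admissible form and adding one to the total weight. In all cases one obtains a sum of contractions by $\ti g$ of tensors $\nabla^{i_s,\hat g}\beta$ with $i_1 + \cdots + i_j = k$, closing the induction.

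The only subtlety is the bookkeeping of the Leibniz rule applied to the contractions $*$. This is handled by the displayed identity for $\nabla^{\hat g}(\ti g^{-1})$: every differentiation of an implicit $\ti g^{-1}$ manufactures precisely another admissible monomial. Because the formula is deliberately schematic — signs, combinatorial coefficients, and the precise index placement are all absorbed into $*$ — no numerical accounting is required, and the induction proceeds purely symbolically. I do not foresee a genuine obstacle; the only care needed is to verify that every operation that $\nabla^{\ti g} = \nabla^{\hat g} + T$ can perform on a $*$-contracted monomial of $\nabla^{i,\hat g}\beta$'s preserves membership in the claimed family of expressions.
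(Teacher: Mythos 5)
Your argument is correct and is essentially the paper's own proof: both establish the $k=1$ case via the tensor $\Gamma(\ti g)^k_{ij}-\Gamma(\hat g)^k_{ij}=\ti g^{k\bar l}\nabla^{\hat g}_i\ti g_{j\bar l}$, giving $\nabla^{\ti g}\alpha=\nabla^{\hat g}\alpha+\nabla^{\hat g}\ti g*\alpha$, and then induct by applying the operator $\nabla^{\hat g}+\nabla^{\hat g}\ti g\,*$ to the inductive expansion. Your explicit treatment of the Leibniz terms coming from the implicit $\ti g^{-1}$ factors in the contractions (via $\nabla^{\hat g}(\ti g^{-1})=-\ti g^{-1}*\nabla^{\hat g}\ti g*\ti g^{-1}$) is a point the paper leaves tacit, but it is the same argument.
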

\begin{proof}
We prove by induction on $k$. We compute under any given coordinate system $\{z_i\}$ around a given point. 

First consider the $k=1$ case. For example, if $\alpha=\alpha_{k\bar{l}}$ is a two tensor, then we have
\[
\begin{split}
\nabla^{\ti{g}}_i\alpha_{k\bar{l}}-\nabla^{\hat{g}}_i\alpha_{k\bar{l}}
&=\left(\nabla^{E}_i\alpha_{k\bar{l}}-\Gamma(\ti{g})^{p}_{ik}\cdot \alpha_{p\bar{l}}\right)-\left(\nabla^{E}_i\alpha_{k\bar{l}}-\Gamma(\hat{g})^{p}_{ik}\cdot \alpha_{p\bar{l}}\right)\\
&=-\left(\Gamma(\ti{g})^{p}_{ik}-\Gamma(\hat{g})^{p}_{ik}\right)\cdot \alpha_{p\bar{l}}\\
&=-\ti{g}^{p\bar{q}}\cdot\nabla^{\hat{g}}_i\ti{g}_{k\bar{q}}\cdot \alpha_{p\bar{l}}.\\
\end{split}
\]
which gives
$$\nabla^{\ti{g}}\alpha=\nabla^{\hat{g}}\alpha+\nabla^{\hat{g}}\ti{g}*\alpha.$$
where $*$ is tensor contraction by $\ti{g}$. It's easy to see that the same argument holds for any tensor field $\alpha$, this proves \eqref{Equ: relation between three metrics} for $k=1$.

Now assume that we have established \eqref{Equ: relation between three metrics} for $1, 2, \dots, k-1$ with $k\geq 2$. Then we have
\[
\begin{split}
\nabla^{k,\ti{g}}\alpha
&=\nabla^{\ti{g}}\left(\nabla^{k-1,\ti{g}}\alpha\right)\\
&=\nabla^{\hat{g}}\left(\nabla^{k-1,\ti{g}}\alpha\right)+\nabla^{\hat{g}}\ti{g}*\nabla^{k-1,\ti{g}}\alpha\\
&=\left(\nabla^{\hat{g}}+\nabla^{\hat{g}}\ti{g}*\right)\left(\sum_{j\geq 1, i_1+\dots +i_j=k-1,i_1,\dots ,i_j\geq 0}\nabla^{i_1,\hat{g}}{\beta}*\dots*\nabla^{i_j,\hat{g}}{\beta}\right)\\
&=\sum_{j\geq 1, i_1+\dots +i_j=k,i_1,\dots ,i_j\geq 0}\nabla^{i_1,\hat{g}}{\beta}*\dots*\nabla^{i_j,\hat{g}}{\beta},\\
\end{split}
\]
where $\beta$ still denotes either the  metric $\ti{g}$ or the tensor $\alpha$, and $*$ still denotes the tensor contraction by $\ti{g}$. This completes the inductive step and establish this lemma.
\end{proof}

As a corollary, we can change the reference metric in Hein-Tosatti's estimate:
\begin{corollary}\label{Cor: bounded estimate on local base 2}
For all compact sets $K\subset B$ and all $k \in \N$, there exists a constant $C_{K,k}$ independent of $t$ such that for all $t \in [0,\infty)$ we have that
\begin{equation}\label{Equ: bounded estimate 2}
\|\om(t)\|_{C^k(K\times Y,\ti{\om}(t))}\leq C_{K,k}.
\end{equation}
\end{corollary}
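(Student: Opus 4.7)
The plan is to combine Hein--Tosatti's uniform $C^k$ bound (Theorem \ref{Thm: bounded estimate for CY of HT}), whose reference is $\hat{\om}(t)=\om_{\mathbb{C}^m}+e^{-t}\om_Y$, with Lemmas \ref{Lem: bounded estimate between reference metric} and \ref{Lem: relation between two metrics} in order to swap the reference to $\ti{\om}(t)=\om_B+e^{-t}\om_Y$. The setup of Theorem \ref{Thm: convergence of higher order estimate for CY} provides, via Fischer--Grauert \cite{FG}, a holomorphic trivialization of $f$ over $U\subset B\setminus S'$. Covering the compact set $K\subset U$ by finitely many small coordinate balls $U_j\subset\subset U$ and using a partition of unity, it suffices to establish the bound on each $U_j\times Y$ with $K$ replaced by a compact set $K_j\subset U_j$.

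On $U_j\times Y$, Theorem \ref{Thm: bounded estimate for CY of HT} furnishes the uniform estimate $|\nabla^{k,\hat{g}(t)}g(t)|_{\hat{g}(t)}\leq C_{U_j,k}$ for every $k$. On the compact closure $\overline{U_j}\subset U\subset B\setminus S'$ the smooth K\"ahler forms $\om_{\mathbb{C}^m}$ and $\om_B$ are uniformly equivalent; since $\hat{g}(t)$ and $\ti{g}(t)$ share the common fiber factor $e^{-t}g_Y$ and differ only through these comparable base factors (with no cross terms in the product structure), they are uniformly equivalent as Riemannian metrics on $\overline{U_j}\times Y$, uniformly in $t$. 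Consequently, for any tensor $T$ the norms $|T|_{\hat{g}(t)}$ and $|T|_{\ti{g}(t)}$ are comparable with $t$-independent constants.

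Now apply Lemma \ref{Lem: bounded estimate between reference metric} both as stated and with the roles of $\om_1$ and $\om_2$ interchanged, producing the uniform bounds
$$\big|\nabla^{k,\ti{g}(t)}\hat{g}(t)\big|_{\ti{g}(t)}\leq C_k, \qquad \big|\nabla^{k,\hat{g}(t)}\ti{g}(t)\big|_{\hat{g}(t)}\leq C_k,$$
for all $k\geq 0$ on $\overline{U_j}\times Y$. Taking $\alpha=g(t)$ in Lemma \ref{Lem: relation between two metrics} then expresses $\nabla^{k,\ti{g}(t)}g(t)$ as a finite sum of tensor products, contracted by $\ti{g}(t)$, of factors of the form $\nabla^{i,\hat{g}(t)}g(t)$ and $\nabla^{i,\hat{g}(t)}\ti{g}(t)$ with $i\leq k$. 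Passing to $\ti{g}(t)$-norms, converting each factor to a $\hat{g}(t)$-norm via the metric equivalence from the previous paragraph, and invoking Hein--Tosatti together with the second display above, one obtains $|\nabla^{k,\ti{g}(t)}g(t)|_{\ti{g}(t)}\leq C_{K_j,k}$ on $K_j\times Y$, with constants independent of $t$. Combining the finitely many $U_j$ yields the claimed bound on $K\times Y$.

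The key point --- not really an obstacle, but the step where care is needed --- is the uniform equivalence of $\hat{g}(t)$ and $\ti{g}(t)$ on compact subsets of the regular locus, which is what allows all constants produced by Lemmas \ref{Lem: bounded estimate between reference metric} and \ref{Lem: relation between two metrics} to be chosen independent of $t$. Once this is verified, the argument is essentially bookkeeping with the product-metric structure.
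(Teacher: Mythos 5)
Your proposal is correct and follows essentially the same route as the paper: invoke Hein--Tosatti's bound relative to $\hat{\om}(t)$, use Lemma \ref{Lem: bounded estimate between reference metric} to control the covariant derivatives of one reference metric with respect to the other, and then convert via Lemma \ref{Lem: relation between two metrics} together with the uniform equivalence of $\om(t)$, $\hat{\om}(t)$ and $\ti{\om}(t)$ on compact subsets. The covering/partition-of-unity step is a harmless addition that the paper leaves implicit.
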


\begin{proof}
Denote $\beta(t)$ the K\"ahler form $\om(t)$ or $\ti{\om}(t)$, using Theorem \ref{Thm: bounded estimate for CY of HT} and Lemma \ref{Lem: bounded estimate between reference metric}, we have for any compact subset $K\subset B$
$$\|\beta(t)\|_{C^k(K\times Y,\hat{\om}(t))}\leq C_{K,k}.$$
Hence, using Lemma \ref{Lem: relation between two metrics} with $\alpha=\om(t)$, $\hat{\om}=\hat{\om}(t)$ and $\ti{\om}=\ti{\om}(t)$, we have the following estimate on $K\times Y$ for $k\geq 1$
\[
\begin{split}
&\left|\nabla^{k,\ti{g}(t)}g(t)\right|_{\hat{\om}(t)}\\=
&\left|\sum_{j\geq 1, i_1+\dots +i_j=k,i_1,\dots ,i_j\geq 1}\nabla^{i_1,\hat{g}(t)}{\beta(t)}*\dots*\nabla^{i_j,\hat{g}(t)}{\beta(t)}\right|_{\hat{\om}(t)}\\\leq 
&C\cdot\sum_{j\geq 1, i_1+\dots +i_j=k,i_1,\dots ,i_j\geq 1}\left|\nabla^{i_1,\hat{g}(t)}{\beta(t)}\right|_{\hat{\om}(t)}\cdot \dots \cdot\left|\nabla^{i_j,\hat{g}(t)}{\beta(t)}\right|_{\hat{\om}(t)}\\\leq
&C_{K,k}.\\
\end{split}
\]
where $*$ denotes tensor contraction by $\ti{g}(t)$. Here we have used the uniformly equivalent relations between $\om(t)$, $\hat{\om}(t)$ and $\ti{\om}(t)$, and hence completes the proof of Corollary \ref{Cor: bounded estimate on local base 2}.
\end{proof}

\begin{remark}Suppose we have two uniformly equivalent families of K\"ahler metrics $\om(t)$ and $\ti{\om}(t)$, it doesn't matter which metric we use to measure the norm. Also, assume we have any quantity of the form 
$$A_1*A_2*\dots *A_k,$$
where each $A_i$ is a tensor, and $*$ is the tensor contraction given by $\om(t)$ or $\ti{\om}(t)$, then by the uniformly equivalent relations between $\om(t)$ and $\ti{\om}(t)$, we have
$$\left|A_1*A_2*\dots *A_k\right|\leq C\cdot\left|A_1\right|_{\ti{\om}(t)}\cdot \dots \cdot \left|A_k\right|_{\ti{\om}(t)}.$$
for some uniform constant $C$ independent of $t$, since here we have only finitely many contractions (depending only on k and the degrees of the $A_i$'s). The case for three or more uniformly equivalent metrics is similar. We will use such principle to take norms throughout this paper.
\end{remark}

The following lemma is a standard result in K\"ahler geometry, and follows easily by direct computations. So we omit the proof.
\begin{lemma}\label{Lem: relation of Rm between two metrics}
Given $X$ be a K\"ahler manifold, and $\om$, $\ti{\om}$ be any two K\"ahler forms on $X$. Define the tensor $\Psi$ on $X$ by
$$\Psi^k_{ip}:=\Gamma(g)^{k}_{ip}-\Gamma(\ti{g})^{k}_{ip}=g^{k\bar{l}}\nabla^{\ti{g}}_ig_{p\bar{l}}.$$
Then we have
\begin{equation}\label{Equ: relation of Rm between two metrics 1}
R(\om)_{i\bar{j}k\bar{l}}=\ti{g}^{s\bar{v}}g_{k\bar{v}}R(\ti\om)_{i\bar{j}s\bar{l}}-\nabla^{\ti{g}}_i\nabla^{\ti{g}}_{\bar{j}}g_{k\bar{l}}+g_{u\bar{v}}\Psi^u_{ik}\overline{\Psi^v_{jl}}.
\end{equation}
In particular, we have
\begin{equation}\label{Equ: relation of Rm between two metrics 2}
{R^\sharp(\om)_{i\bar{j}k}}^l={R^\sharp(\ti\om)_{i\bar{j}k}}^l-g^{l\bar{v}}\nabla^{\ti{g}}_{\bar{j}}\nabla^{\ti{g}}_ig_{k\bar{v}}+g^{l\bar{v}}g_{s\bar{t}}\Psi^s_{ik}\overline{\Psi^t_{jv}}.
\end{equation}
\end{lemma}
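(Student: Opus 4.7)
The plan is to prove the lemma by a direct local computation in K\"ahler geometry; the only subtlety is careful bookkeeping of complex-conjugate indices.

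First I would verify the alternate expression $\Psi^k_{ip}=g^{k\bar l}\nabla^{\ti{g}}_i g_{p\bar l}$ stated in the lemma. This is immediate from the Chern-connection formula $\Gamma(g)^k_{ip}=g^{k\bar l}\partial_i g_{p\bar l}$ combined with $\nabla^{\ti{g}}_i g_{p\bar l}=\partial_i g_{p\bar l}-\Gamma(\ti{g})^q_{ip}g_{q\bar l}$, using that in K\"ahler geometry the only nonvanishing Christoffel symbols are $\Gamma^k_{ip}$ and its complex conjugate (so no $\Gamma^{\bar q}_{i\bar l}$-term appears in the anti-holomorphic slot).

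For \eqref{Equ: relation of Rm between two metrics 1}, I would fix an arbitrary point $x_0\in X$ and work in $\ti{g}$-normal holomorphic coordinates there, so that $\ti{g}_{k\bar l}(x_0)=\delta_{kl}$, $d\ti{g}(x_0)=0$, and $\Gamma(\ti{g})(x_0)=0$. The standard K\"ahler curvature formula gives
\[
R(\om)_{i\bar j k\bar l}(x_0)=-\partial_i\partial_{\bar j}g_{k\bar l}(x_0)+g^{p\bar q}(\partial_i g_{k\bar q})(\partial_{\bar j}g_{p\bar l})(x_0).
\]
At $x_0$ partial derivatives coincide with $\nabla^{\ti{g}}$-covariant derivatives on tensor components, and a short computation using $\partial_i\Gamma(\ti{g})^{\bar p}_{\bar j\bar l}(x_0)=-R(\ti\om)_{i\bar j p\bar l}(x_0)$ (itself a consequence of $R(\ti g)_{i\bar j p\bar l}=-\partial_i\partial_{\bar j}\ti g_{p\bar l}$ at the normal-coordinate point) yields
\[
\partial_i\partial_{\bar j}g_{k\bar l}(x_0)=\nabla^{\ti{g}}_i\nabla^{\ti{g}}_{\bar j}g_{k\bar l}(x_0)-\ti{g}^{s\bar v}g_{k\bar v}R(\ti\om)_{i\bar j s\bar l}(x_0).
\]
For the quadratic piece, $\partial_i g_{k\bar q}(x_0)=\nabla^{\ti{g}}_i g_{k\bar q}(x_0)=g_{s\bar q}\Psi^s_{ik}(x_0)$ together with its Hermitian conjugate $\partial_{\bar j}g_{p\bar l}(x_0)=g_{p\bar t}\overline{\Psi^t_{jl}}(x_0)$ make the identity $g^{p\bar q}g_{s\bar q}g_{p\bar t}=g_{s\bar t}$ collapse the contraction to exactly $g_{u\bar v}\Psi^u_{ik}\overline{\Psi^v_{jl}}$. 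Assembling the three pieces gives the identity at $x_0$; since both sides are tensors and $x_0$ is arbitrary, \eqref{Equ: relation of Rm between two metrics 1} holds on all of $X$.

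For \eqref{Equ: relation of Rm between two metrics 2}, the cleanest route exploits the K\"ahler identity ${R^\sharp(\om)_{i\bar j k}}^l=-\partial_{\bar j}\Gamma(g)^l_{ki}$, which follows from the holomorphic-swap symmetry $R(\om)_{i\bar j k\bar q}=R(\om)_{k\bar j i\bar q}$ together with the defining convention \eqref{Equ: definition of R-sharp tensor}; the analogous identity holds for $\ti{g}$. Substituting $\Gamma(g)^l_{ki}=\Gamma(\ti{g})^l_{ki}+\Psi^l_{ki}$ and noting that $\Psi^l_{ki}$ has only $(1,0)$-indices (so $\nabla^{\ti{g}}_{\bar j}\Psi^l_{ki}=\partial_{\bar j}\Psi^l_{ki}$), one obtains
\[
{R^\sharp(\om)_{i\bar j k}}^l-{R^\sharp(\ti\om)_{i\bar j k}}^l=-\nabla^{\ti{g}}_{\bar j}\Psi^l_{ki}.
\]
Expanding $\nabla^{\ti{g}}_{\bar j}\bigl[g^{l\bar v}\nabla^{\ti{g}}_i g_{k\bar v}\bigr]$ by Leibniz, using $\nabla^{\ti{g}}_{\bar j}g^{l\bar v}=-g^{l\bar p}\overline{\Psi^v_{jp}}$ (obtained from differentiating $g^{l\bar v}g_{p\bar v}=\delta^l_p$ together with the Hermitian conjugate of the $\Psi$-formula) and $\nabla^{\ti{g}}_i g_{k\bar v}=g_{s\bar v}\Psi^s_{ik}$, yields precisely the claimed right-hand side. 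The only genuine obstacle throughout is index bookkeeping of complex-conjugate pairs and of contractions with $g^{-1}$ versus $\ti{g}^{-1}$; there is no substantive analytic content, consistent with the author's remark that the lemma \emph{follows easily by direct computations}.
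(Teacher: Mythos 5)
Your computation is correct. Note, however, that the paper deliberately \emph{omits} the proof of this lemma (``follows easily by direct computations''), so there is no argument of the authors to compare against; your write-up simply supplies the standard verification. Both halves check out: the normal-coordinate derivation of \eqref{Equ: relation of Rm between two metrics 1}, including the identification $\partial_i\partial_{\bar j}g_{k\bar l}=\nabla^{\ti{g}}_i\nabla^{\ti{g}}_{\bar j}g_{k\bar l}-\ti{g}^{s\bar v}g_{k\bar v}R(\ti\om)_{i\bar j s\bar l}$ at the base point and the collapse of the quadratic term to $g_{u\bar v}\Psi^u_{ik}\overline{\Psi^v_{jl}}$, is right with the paper's curvature convention $R_{i\bar jk\bar l}=-\partial_i\partial_{\bar j}g_{k\bar l}+g^{p\bar q}\partial_ig_{k\bar q}\partial_{\bar j}g_{p\bar l}$. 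One genuinely good choice you made: deriving \eqref{Equ: relation of Rm between two metrics 2} independently from ${R^\sharp(\om)_{i\bar jk}}^l=-\partial_{\bar j}\Gamma(g)^l_{ki}$ rather than by contracting \eqref{Equ: relation of Rm between two metrics 1} with $g^{l\bar q}$. The ``In particular'' phrasing of the lemma might tempt one to do the latter, but it does not work literally --- the second-derivative term in \eqref{Equ: relation of Rm between two metrics 2} has the order of $\nabla_i$ and $\nabla_{\bar j}$ swapped, and the $\ti\om$-curvature term appears as ${R^\sharp(\ti\om)_{i\bar jk}}^l=\ti g^{l\bar q}R(\ti\om)_{i\bar jk\bar q}$ rather than as the $g$-contracted expression one would get from \eqref{Equ: relation of Rm between two metrics 1}; reconciling the two requires the $[\nabla_i,\nabla_{\bar j}]$ commutation identity. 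Your route via $-\nabla^{\ti g}_{\bar j}\Psi^l_{ki}$ and the Leibniz expansion with $\nabla^{\ti g}_{\bar j}g^{l\bar v}=-g^{l\bar p}\overline{\Psi^v_{jp}}$ lands exactly on the stated right-hand side.
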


This lemma shows that we can express the difference of the $\Rm$ curvature tensor of two K\"ahler metrics as the tensor contraction of the first covariant derivatives and second covariant derivatives.

 Now we can prove Theorem \ref{Thm: convergence of higher order estimate for CY}:

\begin{proof}[Proof of Theorem \ref{Thm: convergence of higher order estimate for CY}]
We still just need to verify the Conditions $(A)-(C)$ of Lemma \ref{Thm: BIC principle} with $$\eta(t)=\om(t)-\ti{\om}(t).$$
We have already trivialize $f$ holomorphically to a product $U\times Y\to U$. Let $K\subset\subset U$ be any compact subset.

Condition $(C)$ follows from Lemma \ref{Lem:Condition C}.

Condition $(B)$: Replacing $U$ by a slightly smaller subset. With Theorem \ref{Thm: bounded estimate for CY of HT} at hand, Corollary \ref{Cor: bounded estimate on local base 2} implies the estimate
$$\|\om(t)\|_{C^k(U\times Y,\ti\om(t))}\leq C_{U,k}$$
for any $k\geq 0$.

Condition $(A)$: Replacing $U$ by a smaller subset, we only need to verify the condition 
$$\|\om(t)-\ti\om(t)\|_{C^0(U\times Y,\ti{\om}(t))}\leq h_0(t).$$
From the result of Tosatti-Weinkove-Yang, say Equation \eqref{Equ: 0-th convergence of TWY} of Theorem \ref{Thm: convergence of TWY}, we have
$$\|\om(t)-\ti{\om}_t\|_{C^0(U\times Y,\ti{\om}_t)}^2\leq h(t).$$
where $\ti{\om}_t=\om_B+e^{-t}\of$. Choosing product coordinates, say $\{z_{\alpha}, 1\leq\alpha\leq m; y_i, 1\leq i\leq n\}$ with $z_{\alpha}$ being base coordinates and $y_i$ being fiber coordinates. Then the above estimate implies on $U\times Y$
\begin{equation}\label{Equ: local estimate of metric 1}
\left\{
       \begin{aligned}
       & \left|g(t)_{\alpha\bar{\beta}}-\left[(1-e^{-t})(g_B)_{\alpha\bar{\beta}}+e^{-t}(g_\SRF)_{\alpha\bar{\beta}}\right]\right|^2\leq h(t),~~\alpha, \beta\in \mathbf{b},\\
       & e^t\cdot\left|g(t)_{\alpha\bar{j}}-e^{-t}(g_\SRF)_{\alpha\bar{j}}\right|^2\leq h(t),~~\alpha\in \mathbf{b},j\in \mathbf{f},\\
       & e^{2t}\cdot\left|g(t)_{i\bar{j}}-(g_\SRF)_{i\bar{j}}\right|^2\leq h(t),~~i, j\in \mathbf{f}.\\
       \end{aligned}
\right.
\end{equation}
The first inequality of \eqref{Equ: local estimate of metric 1} implies that
$$\left|g(t)_{\alpha\bar{\beta}}-(g_B)_{\alpha\bar{\beta}}\right|^2\leq 2h(t)+2\left|e^{-t}(g_B)_{\alpha\bar{\beta}}-e^{-t}(g_\SRF)_{\alpha\bar{\beta}}\right|^2\leq h(t),~~\alpha, \beta\in \mathbf{b},$$
and the second inequality of \eqref{Equ: local estimate of metric 1} implies
$$e^t\cdot\left|g(t)_{\alpha\bar{j}}\right|^2\leq 2h(t)+e^{-t}\cdot\left|(g_\SRF)_{\alpha\bar{j}}\right|^2\leq h(t),~~\alpha\in \mathbf{b},j\in \mathbf{f}.$$
Since $\om_{\textrm{SRF,b}}=\om_Y$ for any $b\in U$, we have $(g_\SRF)_{i\bar{j}}=(g_Y)_{i\bar{j}}$ with $i,j\in \mathbf{f}$ under the product coordinates, hence the third inequality of \eqref{Equ: local estimate of metric 1} implies
$$e^{2t}\cdot\left|g(t)_{i\bar{j}}-(g_Y)_{i\bar{j}}\right|^2\leq h(t),~~i, j\in \mathbf{f}.$$
So we conclude that on $U\times Y$ under product coordinates
\begin{equation}\label{Equ: local estimate of metric 2}
\left\{
       \begin{aligned}
       & \left|g(t)_{\alpha\bar{\beta}}-(g_B)_{\alpha\bar{\beta}}\right|^2\leq h(t),~~\alpha, \beta\in \mathbf{b},\\
       & e^t\cdot\left|g(t)_{\alpha\bar{j}}\right|^2\leq h(t),~~\alpha\in \mathbf{b},j\in \mathbf{f},\\
       & e^{2t}\cdot\left|g(t)_{i\bar{j}}-(g_Y)_{i\bar{j}}\right|^2\leq h(t),~~i, j\in \mathbf{f}.\\
       \end{aligned}
\right.
\end{equation}
This implies that
$$\|\om(t)-\ti\om(t)\|_{C^0(U\times Y,\ti{\om}(t))}\leq h(t).$$
This verifies Condition $(A)$ since we already have local uniform equivalence between $\om(t)$ and $\ti{\om}(t)$. 

Now applying Lemma \ref{Thm: BIC principle}, we get the desired estimate \eqref{Equ: convergence of higher order estimate for CY}. 

It remains to prove curvature convergence estimates \eqref{Equ: convergence of Rm in CY case}. 
Applying Lemma \ref{Lem: relation of Rm between two metrics} with $\om=\om(t)$ and $\ti\om=\ti{\om}(t)$, and define the tensor
\begin{equation}\label{Equ: definition of first order derivative 1}
\Psi(t)^k_{ip}:=\Gamma(g(t))^{k}_{ip}-\Gamma(\ti{g}(t))^{k}_{ip}=g(t)^{k\bar{l}}\nabla^{\ti{g}(t)}_ig(t)_{p\bar{l}},
\end{equation}
we get
\[
\begin{split}
    {R^\sharp(\om(t))_{i\bar{j}k}}^l-{R^\sharp(\ti{\om}(t))_{i\bar{j}k}}^l
    &=g(t)^{l\bar{v}}\nabla^{\ti{g}(t)}_{\bar{j}}\nabla^{\ti{g}(t)}_ig(t)_{k\bar{v}}+g(t)^{l\bar{v}}g(t)_{s\bar{t}}\Psi(t)^s_{ik}\overline{\Psi(t)^t_{jv}}\\
    &=g(t)^{l\bar{v}}\nabla^{\ti{g}(t)}_{\bar{j}}\nabla^{\ti{g}(t)}_ig(t)_{k\bar{v}}+g(t)^{l\bar{v}}g(t)^{p\bar{q}}\nabla^{\ti{g}(t)}_ig(t)_{k\bar{q}}\nabla^{\ti{g}(t)}_{\bar{j}}g(t)_{p\bar{v}}.\\
\end{split}
\]
 Then by induction we can show that for all $k\geq 0$
\begin{equation}\label{Equ: higher-order derivatives of Q}
\nabla^{k,\ti{g}(t)}\left[R^\sharp(\om(t))-R^\sharp(\ti{\om}(t))\right]=\sum_{j\geq 1,i_1+\dots +i_j=k+2,i_1,\dots ,i_j\geq 1}\nabla^{i_1,\ti{g}(t)}g(t)*\dots *\nabla^{i_j,\ti{g}(t)}g(t).    
\end{equation}
where $*$ is the tensor contraction given by $g(t)$. In fact, $k=0$ case already follows from \eqref{Equ: relation of Rm between two metrics 2}. Suppose we have \eqref{Equ: higher-order derivatives of Q} for $0, \dots, k-1$ for $k\geq 1$. Then for $k$ we have
\[
\begin{split}
    \nabla^{k,\ti{g}(t)}\left[R^\sharp(\om(t))-R^\sharp(\ti{\om}(t))\right]
    &=\nabla^{\ti{g}(t)}\left\{\sum_{j\geq 1,i_1+\dots +i_j=k+1,i_1,\dots ,i_j\geq 1}\nabla^{i_1,\ti{g}(t)}g(t)*\dots *\nabla^{i_j,\ti{g}(t)}g(t)\right\}\\
    &=\sum_{j\geq 1,i_1+\dots +i_j=k+2,i_1,\dots ,i_j\geq 1}\nabla^{i_1,\ti{g}(t)}g(t)*\dots *\nabla^{i_j,\ti{g}(t)}g(t).\\
\end{split}
\]
This proves \eqref{Equ: higher-order derivatives of Q} for $k$.

Now, taking norms with respect to $\ti{g}(t)$ and using the equivalence of $\om(t)$ and $\ti{\om}(t)$ we have on $K\times Y$
\[
\begin{split}
    \left|\nabla^{k,\ti{g}(t)}\left[R^\sharp(\om(t))-R^\sharp(\ti{\om}(t))\right]\right|_{\ti{\om}(t)}
    &\leq C_k\cdot\sum_{j\geq 1,i_1+\dots +i_j=k+2,i_1,\dots ,i_j\geq 1}\left|\nabla^{i_1,\ti{g}(t)}g(t)\right|_{\ti{\om}(t)}\cdot\dots\cdot\left|\nabla^{i_j,\ti{g}(t)}g(t)\right|_{\ti{\om}(t)}\\
    &\leq h_{K,k}(t).\\
\end{split}
\]
So we get \eqref{Equ: convergence of Rm in CY case}, and this completes the proof.
\end{proof}

\subsection{Metric and curvature convergence on torus-fibered Calabi-Yau manifolds}\label{sec:CY degeneration 2}

Now we consider the case when the smooth fibers $X_b$ are finite quotients of complex tori, but we allow the complex structure to change. We denote the semi-Ricci flat form $\om_{\SRF}$ by $\om_{\SF}$ now, since in this case, its restriction to each smooth fiber is actually flat. We have the following higher-order estimates.
\begin{lemma}[\cite{GTZ, HT, TZ}]\label{Lemma: basic lemma 1}
For all compact sets $K\subset X\backslash S$ and all $k\geq 0$, there exists constants $C_{K,k}$ independent of $t$ such that for all $t\in [0,\infty)$ we have that
\begin{equation}\label{Equ: higher-order bounded 2}
\|\om(t)\|_{C^k(K,\ti\om_t)}\leq C_{K,k}.
\end{equation}
and the curvature bound
\begin{equation}\label{Equ: Rm curvature bounded 2}
\|\Rm(\om(t))\|_{C^k(K,\ti\om_t)}\leq C_{K,k}.
\end{equation}
\end{lemma}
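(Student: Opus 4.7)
The plan is to take the higher-order bound \eqref{Equ: higher-order bounded 2} from \cite{GTZ, HT, TZ} and then deduce the curvature bound \eqref{Equ: Rm curvature bounded 2} as an algebraic consequence. To outline how one would establish \eqref{Equ: higher-order bounded 2} directly: the torus-fiber assumption allows us to localize and pass to a cover on which the geometry becomes genuinely product-like after an appropriate rescaling. First I would fix a compact $K \subset X \setminus S$ and cover $f(K)$ by finitely many small balls $U \subset B \setminus S'$ over which, after pulling back along a finite \'etale base change (using that the monodromy of $f^{-1}(B\setminus S') \to B \setminus S'$ must preserve the torus structure of the fibers), the restricted fibration becomes trivial with honest complex-torus fibers $\widetilde X_b = \CC^n/\Lambda_b$. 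It suffices to prove the bounds on such a cover.

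Next I would introduce non-collapsed coordinates on the cover by dilating the fiber directions by $e^{t/2}$, so that the rescaled fiber lattices have unit size. Under this change, the reference metric $\ti\om_t = \om_B + e^{-t}\of$ becomes uniformly equivalent to a fixed product-type K\"ahler metric on $U \times \widetilde X$, independently of $t$, and the Ricci-flat equation $\om(t)^{n+m} = c_t e^{-nt}\om_X^{n+m}$ transforms to a non-degenerate complex Monge-Amp\`ere equation on the same fixed product with uniformly bounded and positive right-hand side. The crucial uniform two-sided bound $C^{-1} \ti\om_t \leq \om(t) \leq C \ti\om_t$ on $K$ is the content of Gross-Tosatti-Zhang \cite{GTZ} and, in its sharpest form, Hein-Tosatti \cite{HT}; it combines the Tosatti-Weinkove-Yang potential estimate with a maximum-principle argument lifted to the universal cover of the fiber, which linearizes precisely because $\om_\SF$ is flat along each fiber.

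Once the rescaled Monge-Amp\`ere equation is known to be uniformly elliptic with smooth coefficients, a standard Evans-Krylov estimate followed by a Schauder bootstrap yields uniform $C^k$ bounds on the rescaled metric for every $k$; undoing the rescaling gives \eqref{Equ: higher-order bounded 2}. The curvature estimate \eqref{Equ: Rm curvature bounded 2} is then purely algebraic: in any fixed smooth coordinate chart, $\Rm(\om(t))$ is a universal polynomial in $g(t)^{-1}$ and the first two Euclidean derivatives of $g(t)$, so the $C^{k+2}$ bounds on $\om(t)$ with respect to $\ti\om_t$, combined with the uniform equivalence of $\om(t)$ and $\ti\om_t$ and identities of the type in Lemma \ref{Lem: relation of Rm between two metrics}, immediately give $\|\Rm(\om(t))\|_{C^k(K,\ti\om_t)} \leq C_{K,k}$.

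The main obstacle is the uniform two-sided metric equivalence in the collapsing regime; this is where the torus-fiber hypothesis is essential. Without flatness of $\om_\SF$ along the fibers, the rescaled limit at the fiber scale would carry curvature and Evans-Krylov would no longer apply uniformly --- in the general Calabi-Yau fiber case one instead needs the more delicate blow-up analysis of Hein-Tosatti \cite{HT2}. The behavior near the singular set $S$ is also delicate, and the constants $C_{K,k}$ must be allowed to blow up as $K$ approaches $S$.
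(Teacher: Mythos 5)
The paper gives no proof of this lemma: it is quoted directly from \cite{GTZ, HT, TZ}, so there is no internal argument to compare your write-up against. Your sketch faithfully reproduces the strategy of those references (local trivialization of the torus fibration, stretching the fiber directions by $e^{t/2}$, uniform ellipticity of the rescaled Monge--Amp\`ere equation coming from the two-sided metric equivalence, Evans--Krylov plus Schauder bootstrap, and the algebraic passage from $C^{k+2}$ metric bounds to $C^k$ curvature bounds); the one imprecision is that the final curvature step should be performed in the stretched coordinates rather than in a ``fixed smooth coordinate chart,'' since the $C^k(K,\ti\om_t)$ norm degenerates along the fibers and is only comparable to Euclidean derivative bounds after the rescaling.
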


Now, we can prove Theorem \ref{Thm: convergence Rm of CCY with torus fiber}. 
\begin{proof}[Proof of Theorem \ref{Thm: convergence Rm of CCY with torus fiber}]
For all compact sets $K\subset X\backslash S$, we choose disks $B_1\subset\subset B_2\subset\subset B\backslash S'$ such that $K\subset f^{-1}(B_1)$. Set $U=f^{-1}(B_2)$. Then, as the proof of Theorem \ref{Thm: convergence of higher order estimate for CY}, with the help of Theorem \ref{Thm: convergence of TWY} and Lemma \ref{Lemma: basic lemma 1}, we can similarly verify the Conditions $(A)-(C)$ of Lemma \ref{Thm: BIC principle} with $$\eta(t)=\om(t)-\ti\om_t.$$
So we immediately get the higher-order convergence
$$\|\om(t)-\ti\om_t\|_{C^k(K,\ti\om_t)}\le h_{K,k}(t),$$
where $h_{K,k}(t)$ are positive functions which tends to zero as $t\to \infty$, depending only on $k$ and the domain $K$. It remains to show that
$$\|\Rm(\om(t))-\Rm(\ti\om_t)\|_{C^k(K,\ti\om_t)}\leq h_{K,k}(t).$$
Applying Lemma \ref{Lem: relation of Rm between two metrics} with $\om=\om(t)$ and $\ti\om=\ti\om_t$, we get
\begin{equation}\label{Equ: relation of the Rm tensor}
R(\om(t))_{i\bar{j}k\bar{l}}=(\ti{g}_t)^{s\bar{v}}g(t)_{k\bar{v}}R(\ti{\om}_t)_{i\bar{j}s\bar{l}}-\nabla^{\ti{g}_t}_i\nabla^{\ti{g}_t}_{\bar{j}}g(t)_{k\bar{l}}+g(t)_{u\bar{v}}\Psi(t)^u_{ik}\overline{\Psi(t)^v_{jl}}.  
\end{equation}
which gives
\begin{equation}\label{Equ: relation of the Rm tensor 2}
R(\ti{\om}_t)_{i\bar{j}k\bar{l}}=g(t)^{s\bar{v}}(\ti{g}_t)_{k\bar{v}}R(\om(t))_{i\bar{j}s\bar{l}}+g(t)^{s\bar{v}}(\ti{g}_t)_{k\bar{v}}\nabla^{\ti{g}_t}_i\nabla^{\ti{g}_t}_{\bar{j}}g(t)_{s\bar{l}}-g(t)^{s\bar{t}}(\ti{g}_t)_{k\bar{t}}g(t)_{u\bar{v}}\Psi(t)^u_{is}\overline{\Psi(t)^v_{jl}}.
\end{equation}
This is equivalent to 
\begin{equation}\label{Equ: relation of the Rm tensor 3}
R(\ti{\om}_t)_{i\bar{j}k\bar{l}}=g(t)^{s\bar{v}}(\ti{g}_t)_{k\bar{v}}R(\om(t))_{i\bar{j}s\bar{l}}+g(t)^{s\bar{v}}(\ti{g}_t)_{k\bar{v}}\nabla^{\ti{g}_t}_i\nabla^{\ti{g}_t}_{\bar{j}}g(t)_{s\bar{l}}-g(t)^{s\bar{t}}g(t)^{p\bar{q}}(\ti{g}_t)_{k\bar{t}}\nabla^{\ti{g}_t}_ig(t)_{s\bar{q}}\nabla^{\ti{g}_t}_{\bar{j}}g(t)_{p\bar{l}}.  
\end{equation}
Hence we have
\begin{equation}\label{Equ: relation of the Rm tensor 4}
\begin{split}
&R(\ti{\om}_t)_{i\bar{j}k\bar{l}}-R(\om(t))_{i\bar{j}k\bar{l}}\\
=&g(t)^{s\bar{v}}[(\ti{g}_t)_{k\bar{v}}-g(t)_{k\bar{v}}]R(\om(t))_{i\bar{j}s\bar{l}}+g(t)^{s\bar{v}}(\ti{g}_t)_{k\bar{v}}\nabla^{\ti{g}_t}_i\nabla^{\ti{g}_t}_{\bar{j}}g(t)_{s\bar{l}}-g(t)^{s\bar{t}}g(t)^{p\bar{q}}(\ti{g}_t)_{k\bar{t}}\nabla^{\ti{g}_t}_ig(t)_{s\bar{q}}\nabla^{\ti{g}_t}_{\bar{j}}g(t)_{p\bar{l}}.\\
\end{split}  
\end{equation}
Taking norms with respect to $\ti{\om}_t$ and applying \eqref{Equ: convergence of CCY with torus fiber} and Lemma \ref{Lemma: basic lemma 1}, we  have that on $U$
\[
\begin{split}
&\left|\Rm(\ti{\om}_t)-\Rm(\om(t))\right|_{\ti{\om}_t}\\
\leq& C\cdot \left|\ti{\om}_t-\om(t)\right|_{\ti{\om}_t}\cdot \left|\Rm(\om(t)\right|_{\ti{\om}_t}+C\cdot \left|\nabla^{2,\ti{g}_t}g(t)\right|_{\ti{\om}_t}+C\cdot \left|\nabla^{\ti{g}_t}g(t)\right|^2_{\ti{\om}_t}\\
\leq& C\cdot h_K(t)\cdot C_K+C\cdot h_{K}(t)+C\cdot h_{K}(t)^2\\
\leq&h_K(t).\\
\end{split}
\]
Also from \eqref{Equ: relation of the Rm tensor 3}, by induction we can easily obtain that for all $k\geq 0$
\begin{equation}\label{Equ: higher-order derivatives of Rm}
\begin{split}
&\nabla^{k,\ti{g}_t}\Rm(\ti{\om}_t)\\
=&\sum_{j\geq 1,i_1+\dots +i_j=k,i_1,\dots ,i_j\geq 0}\nabla^{i_1,\ti{g}_t}g(t)*\dots *\nabla^{i_{j-1},\ti{g}_t}g(t)*\nabla^{i_j,\ti{g}_t}\Rm(\om(t)),\\
\end{split}    
\end{equation}
where $*$ denotes tensor contraction by $g(t)$ or $\ti{g}_t$. This implies that on $U$
\[
\begin{split}
&\left|\nabla^{k,\ti{g}_t}\Rm(\ti{\om}_t)\right|_{\ti{\om}_t}\\
\leq& C\cdot \sum_{j\geq 1,i_1+\dots +i_j=k,i_1,\dots ,i_{j}\geq 0}\left|\nabla^{i_1,\ti{g}_t}g(t)\right|_{\ti{\om}_t}\cdot\dots\cdot\left|\nabla^{i_{j-1},\ti{g}_t}g(t)\right|_{\ti{\om}(t)}\cdot\left|\nabla^{i_j,\ti{g}_t}\Rm(\om(t))\right|_{\ti{\om}_t}\\
\leq&C_{K,k}.\\
\end{split}
\]
Hence if we set
$$\ti\eta(t)=\Rm(\ti{\om}_t)-\Rm(\om(t)),$$
then we have
\begin{equation}\label{Equ: conditions of Rm 1}
\left\{
\begin{aligned}
      &\|\ti\eta(t)\|_{C^0(U,\ti\om_t)}=\|\Rm(\ti{\om}_t)-\Rm(\om(t))\|_{C^0(U,\ti\om_t)}\leq h_K(t),\\
      &\|\ti\eta(t)\|_{C^k(U,\ti\om_t)}=\|\Rm(\ti{\om}_t)-\Rm(\om(t))\|_{C^k(U,\ti\om_t)}\leq C_{K,k},\\
\end{aligned}
\right.
\end{equation}
Hence Conditions $(A)$ and $(B)$ of Lemma \ref{Thm: BIC principle} are satisfied for $\eta(t)=\ti\eta(t)$ and $\ti{g}(t)=\ti{g}_t$. Hence from Lemma \ref{Thm: BIC principle} we conclude the local convergence
$$\|\Rm(\om(t))-\Rm(\ti\om_t)\|_{C^k(K,\ti\om_t)}\leq h_{K,k}(t).$$
This establish \eqref{Equ: convergence Rm of CCY with torus fiber}.
\end{proof}


\section{Applications to normalized K\"ahler-Ricci flow on torus-fibered minimal models}\label{sec:flow}

Let  $(X^{m+n},\omega_0)$ be a compact K\"ahler manifold with semi-ample canonical bundle $K_X$ and assume its Kodaira dimension to be $0<m:=\Kod(X)<m+n$. Then the pluricanonical system $|\ell K_X|$ for sufficiently large $\ell \in \mathbb{Z^+}$ induces the so called ``Iitaka fibration" map
\begin{equation}\label{Equ: canonical map}
f:X \to B \subset \mathbb{C}\mathbb{P}^N:=\mathbb{P}H^0(X,K_X^{\otimes \ell}),
\end{equation}
where $B$ is the canonical model of $X$ with $dimB=m$. Let $S'$ be the singular set of $B$ together with the set of critical values of $f$, and we define $S=f^{-1}(S') \subset X$. 

From \eqref{Equ: canonical map}, we have $f^*\mathcal{O}(1)=K_X^{\otimes \ell}$, hence if we let $\chi=\frac{1}{\ell}\omega_{\mathrm{FS}}$ on $\mathbb{P}H^0(X,K_X^{\otimes \ell})$, we have that $f^*\chi$ (denoted by $\chi$ afterwards) is a smooth semi-positive representative of $-c_1(X)$.  Here, $\omega_{\mathrm{FS}}$ denotes the Fubini-Study metric.  

Given the K\"ahler metric $\om_0$ on $X$, since $X_b:=f^{-1}(b)$ is a Calabi-Yau manifold for each $b\in B\backslash S'$, there exists a unique smooth function $\rho_b$ on $X_b$ with $\int_{X_b}\rho_b\omega_0^{n}=0$, such that $\om_0|_{X_b}+\ddbar\rho_b=:\om_b$ is the unique Ricci-flat K\"ahler metric on $X_b$ in the class $[\om_0|_{X_b}]$.  Moreover, $\rho_b$ depends smoothly on $b$, and so define a global smooth function on $X\backslash S$.  We define
$$\om_{\SRF}=\om_0+\ddbar\rho,$$
which is a closed real $(1,1)$-form on $X\backslash S$, called the ``semi-Ricci flat metric''.

Let $\Om$ be the smooth volume form on $X$ with
\begin{equation}\label{Equ: background volume form}
\ddbar\log\Om=\chi,~\int_X\Om=\binom{m+n}{m}\int_X\om_0^{n}\wedge\chi^m.
\end{equation}
Define a function $F$ on $X\backslash S$ by
\begin{equation}
F:=\frac{\Om}{\binom{m+n}{m}\chi^m\wedge\om_{\SRF}^{n}},
\end{equation}
then $F$ is constant along the fiber $X_b$, $b\in B\backslash S'$, so it descends to a smooth function on $B\backslash S^\prime$.  Then \cite{ST2} showed that the Monge-Amp\'ere equation
\begin{equation}\label{Equ: MAE on canonical model}
(\chi +\ddbar v)^m=Fe^v\chi^m,
\end{equation}
has a unique solution $v\in \PSH(\chi)\cap C^0(B)\cap C^\infty(B\backslash S')$.  Define
$$\om_B=\chi+\ddbar v ,$$
which is a smooth K\"ahler metric on $B\backslash S'$, satisfying the twisted K\"ahler-Einstein equation
$$\Ric(\om_B)=-\om_B+\om_{\mathrm{WP}},$$
where $\om_{\mathrm{WP}}$ is the smooth Weil-Petersson form on $B\backslash S'$.

Now let $\om=\om(t)$ be the solution of the normalized K\"ahler-Ricci flow
\begin{equation}\label{Equ: normalized KRF 2}
\ddt\om=-\Ric(\om)-\om,~\om(0)=\om_0,
\end{equation}
whose solution exists for all time. Define the global reference metrics
$$\hat{\om}(t)=e^{-t}\om_0+(1-e^{-t})\chi,$$
then it is K\"ahler for all $t\ge 0$, and we can write $\om(t)=\hat{\om}(t)+\ddbar\vp(t)$. Then the K\"ahler-Ricci flow \eqref{Equ: normalized KRF 2} is equivalent to the parabolic Monge-Amp\'ere equation
\begin{equation}\label{scalar MAE}
\ddt\vp=\mathrm{log}\frac{e^{(n-m)t}\left(\hat{\om}(t)+\ddbar\vp(t)\right)^n}{\Om}-\vp,~\vp(0)=0.
\end{equation}

We denote by $T_0=\tr{\om(t)}{\om_B}$ and $u=\dot{\vp}+\vp-v$ on $X\backslash S$. Define on $X\backslash S$ the reference metrics
$$\ti{\om}(t)=e^{-t}\om_{\SRF}+(1-e^{-t})\om_B.$$

We always set $K=f^{-1}(K')$， where $K'\subset B\backslash S'$ is a compact subset.  Then we can choose some open subset $U'\subset\subset B\backslash S'$ such that $K'\subset U'$.  Set $U=f^{-1}(U')$, then $K\subset \subset U\subset \subset X\backslash S$.  
First, we have the following lemma in the general fibration case. See \cite{ST2, ST3, FZ, TWY, To4, J}.
\begin{lemma}\label{Lemma: basic lemma 2}
There exist some constant $C=C(K)>0$ and positive functions $h(t)$ which tends to zero as $t\to \infty$, depending on the domain $K$, such that
\begin{enumerate}
\item [(1)]$C^{-1}\hat{\om}(t)\le\om(t)\le C\hat{\om}(t)$, on $K\times[0,\infty).$ 
\item [(2)]$|\vp-v|+|\dot{\vp}+\vp-v|\le h(t),$~on~$K\times[0,\infty).$
\item [(3)]$\|\om(t)-\ti{\om}(t)\|_{C^0(K,\om(t))}\le h(t).$
\item [(4)]$|T_0-m|+\left|\|\om_B\|_{\om(t)}^2-m\right|\le h(t),~on~K\times[0,\infty).$
\item [(5)]There exists a uniform constant $C_0>0$ such that
$$|R|\le C_0,~on~X\times[0,\infty).$$
\item [(6)]Along the normalized K\"ahler-Ricci flow \eqref{Equ: normalized KRF 2}, we have on $X\backslash S$
\begin{equation}\label{Equ: evolution equality of u}
\left(\de_t-\Delta\right)u=\tr{\om(t)}{\om_B}-m.
\end{equation}
\item [(7)]We have
\begin{equation}\label{first derivative convergence2}
|\nabla u|^2\leq h(t),~on~K\times[0,\infty).
\end{equation}
\item [(8)]$\left|R(t)+m\right|\le h(t),~on~K\times[0,\infty).$
\end{enumerate}
Especially, if $S=\emptyset$, then all of the above estimates hold with $K$ replaced by $X$ and $h(t)$ replaced by $Ce^{-\eta t}$ for some constants $\eta,C>0$ depending on $(X,\om_0)$.
\end{lemma}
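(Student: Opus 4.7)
Essentially every item of this lemma is already in the literature cited (Song--Tian \cite{ST1, ST2, ST3}, Fong--Zhang \cite{FZ}, Tosatti--Weinkove--Yang \cite{TWY}, Tosatti \cite{To4} and the first author \cite{J}); my task is to assemble the ingredients and note where they come from, as the sequel will only use these estimates as a black box.

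The natural order is to handle (2), (1), (3) in sequence. Item (2) is the standard $C^0$ estimate for the parabolic Monge--Amp\`ere equation \eqref{scalar MAE}: a Kolodziej-type argument controls $\vp-v$ on compact subsets of $X\backslash S$, and the identity $\dot\vp+\vp-v=u$ together with its evolution (to be established in (6)) and the maximum principle controls $u$. Item (1) follows from the parabolic Schwarz/Chern--Lu type computation: on $X\backslash S$ one computes $(\partial_t-\Delta)\log \tr{\om(t)}{\hat\om(t)}$, uses that $\hat\om(t)$ has uniformly bounded bisectional curvature on any compact $K\subset X\backslash S$, and combines this with the bound on $\dot\vp$ coming from \eqref{scalar MAE} to obtain $\tr{\om(t)}{\hat\om(t)}\le C_K$; the reverse inequality comes from the Monge--Amp\`ere equation \eqref{scalar MAE}, whose right-hand side is uniformly bounded above and below on $K$. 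This is exactly the TWY estimate in \cite{TWY}. Item (3) is then a direct translation of the conclusion of \cite{TWY} (see Theorem \ref{Thm: convergence of TWY} above) into the K\"ahler--Ricci flow setting via the standard identification between the Ricci-flat degeneration and the collapsing K\"ahler--Ricci flow on minimal models.

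Items (4)--(6) are essentially algebraic once (1)--(3) are in hand. For (6), I would differentiate $u=\dot\vp+\vp-v$ in $t$, plug in \eqref{scalar MAE}, and use the definitions $\ddbar\log\Om=\chi$ and $\om_B=\chi+\ddbar v$ together with the twisted K\"ahler--Einstein equation $\Ric(\om_B)=-\om_B+\om_\mathrm{WP}$; the Weil--Petersson and base/fiber cross terms cancel and leave exactly $\Delta u+T_0-m$. For (4), the $C^0$ bound (3) forces the base/base block of $\om(t)$ to converge to $\om_B$ and the fiber/fiber block to the fiberwise flat model; a direct computation of $\tr{\om(t)}{\om_B}$ and $\|\om_B\|^2_{\om(t)}$ in these coordinates then gives the convergence to $m$. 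Item (5) is Song--Tian's global scalar curvature bound \cite{ST2, ST3}, which holds on all of $X$ (not just on $X\backslash S$) and is proved by a maximum principle applied to a combination of $R$, $\dot\vp$ and $u$; I would just quote it.

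The remaining pieces are (7) and (8), and (7) is the most delicate. The strategy is a cutoff maximum-principle argument: on a slightly larger domain $\hat K\subset\subset X\backslash S$, take $\rho$ as in Condition (C) of Lemma \ref{Thm: BIC principle} (available by Lemma \ref{Lem:Condition C}) and consider $Q=\rho^2|\nabla u|^2+ A u^2 \cdot h(t)^{-1}$ for a suitable slowly decaying $h(t)$ furnished by (2). Using (6), a routine Bochner computation yields
\begin{equation*}
(\partial_t-\Delta)|\nabla u|^2\leq -2|\nabla\nabla u|^2-2|\nabla\dbar u|^2+\nabla T_0*\nabla u +\text{l.o.t.},
\end{equation*}
and on $\hat K$ all higher covariant derivatives of $\om(t)$ with respect to $\hat\om(t)$ are bounded by Lemma \ref{Lemma: basic lemma 1} combined with the equivalence (1); thus $|\nabla T_0|$ is controlled and $|T_0-m|\to 0$ by (4). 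The main obstacle is controlling the cutoff and cross terms at the maximum point of $Q$; this is exactly the same dance performed in the proof of Lemma \ref{Thm: BIC principle} in Section \ref{sec: pf of BIC}, and the choice $A=A(K)$ large enough absorbs the bad terms, yielding $|\nabla u|^2\le h_K(t)$ on $K$. Finally (8) follows by tracing (6): $R=-T_0-\Delta u+(\text{something involving }\dot u)$, so combining (4), (6) and a standard interior estimate for $\Delta u$ (which is also inside the scope of Lemma \ref{Thm: BIC principle} applied to $\eta=u$) gives $|R+m|\le h_K(t)$. The exponential decay when $S=\emptyset$ is the global analogue of each step, improved via Remark~2 after Lemma \ref{Thm: BIC principle}.
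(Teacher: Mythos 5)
The paper gives no proof of this lemma at all: it is presented as a compendium of known estimates with the citation list \cite{ST2, ST3, FZ, TWY, To4, J}, so your task really is the assembly job you describe, and items (1), (2), (4), (5), (6) are attributed and sketched correctly (one quibble: the Weil--Petersson form plays no role in deriving \eqref{Equ: evolution equality of u}; that computation only uses $\ddbar\log\Om=\chi$ and $\om_B=\chi+\ddbar v$). Item (3) is also correct as a citation, but your justification is not: there is no ``standard identification'' transferring the elliptic Calabi--Yau collapsing estimate to the flow; \cite{TWY} proves the flow version separately by genuinely parabolic arguments, and that separate theorem is what is being quoted.

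The genuine gap is in your treatment of (7) and (8). The paper asserts this lemma \emph{in the general fibration case} (the sentence introducing it says so explicitly, and the intended source for (7)--(8) is \cite{J}), whereas your argument for (7) invokes Lemma \ref{Lemma: basic lemma 1} -- and your argument for (8) invokes Lemma \ref{Thm: BIC principle} applied to $\eta=u$, which requires uniform $C^k$ bounds on $u$ for all $k$. Both of these inputs are higher-order collapsing estimates that are only available under the torus-fiber hypothesis (and Lemma \ref{Lemma: basic lemma 1} is moreover the \emph{elliptic} Calabi--Yau estimate; the flow analogue is Lemma \ref{Lemma: basic lemma 3}). For general Calabi--Yau fibers no such higher-order bounds are known, so the term $\nabla T_0 * \nabla u$ in your Bochner computation, and the bound on $\Delta u$ needed for (8), cannot be controlled the way you propose. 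The argument of \cite{J} avoids this by working only with the quantities already bounded in (1)--(6) -- the parabolic Schwarz lemma, the global scalar curvature bound, and the evolution equation \eqref{Equ: evolution equality of u} -- in a more delicate maximum-principle scheme that never differentiates $T_0$. If you only intend to prove the lemma in the torus-fiber setting (which is all the paper subsequently uses), your outline is workable after replacing Lemma \ref{Lemma: basic lemma 1} by Lemma \ref{Lemma: basic lemma 3}; but as a proof of the lemma as stated it does not go through.
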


From now on, assume the smooth fibers are the quotients of complex tori by holomorphic free action of a finite group. In this case, the semi-Ricci flat metric $\om_\SRF$ we constructed above is actually flat when restricted to any smooth fiber $X_b$, $b\in B\backslash S^\prime$, and we denote $\om_\SRF$ by $\om_\SF$ to indicate such semi-flat property. We have the following estimates.

\begin{lemma}[\cite{FZ, Gi0, GTZ, HT, TZ}]\label{Lemma: basic lemma 3}
For all compact sets $K\subset X\backslash S$ and all $k\geq 0$, there exists constants $C_{K,k}$ independent of $t$ such that for all $t\in [0,\infty)$ we have the higher-order derivatives bound
\begin{equation}\label{Equ: higher-order bounded}
\|\om(t)\|_{C^k(K,\ti{\om}(t))}\leq C_{K,k}.
\end{equation}
and the curvatures bound
\begin{equation}\label{Equ: Rm curvature bounded}
\|\Rm(\om(t))\|_{C^k(K,\ti{\om}(t))}\leq C_{K,k}.
\end{equation}
\begin{equation}\label{Equ: Ric curvature bounded}
\|\Ric(\om(t))\|_{C^k(K,\ti{\om}(t))}\leq C_{K,k}.
\end{equation}
\begin{equation}\label{Equ: R curvature bounded}
\|R(\om(t))\|_{C^k(K,\ti{\om}(t))}\leq C_{K,k}.
\end{equation}
\end{lemma}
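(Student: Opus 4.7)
The statement is a compilation of estimates established in \cite{FZ, Gi0, GTZ, HT, TZ}, so the plan is to describe how these ingredients assemble into the four stated bounds in a cascading order: $C^0$ metric equivalence, then $S$-estimate, then curvature bound, then bootstrap.

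\textbf{Step 1: Reduction to uniform equivalence with $\ti{\om}(t)$.} Lemma \ref{Lemma: basic lemma 2}(1) already gives $C^{-1}\hat{\om}(t)\le \om(t)\le C\hat{\om}(t)$ on $K\times[0,\infty)$. Since $K\subset\subset X\setminus S$ projects to $K'\subset\subset B\setminus S'$, on $K$ both $\om_B$ and $\chi$ are smooth K\"ahler and bounded away from zero, so $\hat{\om}(t)=e^{-t}\om_0+(1-e^{-t})\chi$ is uniformly equivalent to the base part of $\ti{\om}(t)=e^{-t}\om_{\SF}+(1-e^{-t})\om_B$ in horizontal directions, while $\om_0|_{X_b}$ and $\om_{\SF}|_{X_b}$ agree up to a bounded $\ddbar$ in vertical directions. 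This yields $C^{-1}\ti{\om}(t)\le \om(t)\le C\ti{\om}(t)$ on $K$, the $k=0$ case of the first estimate.

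\textbf{Step 2: The $S$-estimate (Calabi-type $C^1$ bound).} Define
\[
\Psi(t)^k_{ip}:=g(t)^{k\bar{l}}\nabla^{\ti{g}(t)}_i g(t)_{p\bar{l}},\qquad S:=|\Psi(t)|^2_{\ti{g}(t)}.
\]
Following the Gross--Tosatti--Zhang torus-fiber strategy, lift $f$ over a small polydisk $U'\subset B\setminus S'$ to the universal cover $U'\times\CC^n$, where $\om_{\SF}$ becomes an explicit family of flat metrics in the fiber variable. Apply maximum principle to the auxiliary function
\[
Q=\eta\cdot S-A\,\tr{\ti{\om}(t)}{\om(t)}
\]
where $\eta$ is a cut-off pulled back from the base (Lemma \ref{Lem:Condition C}). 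The evolution of $\tr{\ti{\om}(t)}{\om(t)}$ is controlled by Lemma \ref{Lemma: basic lemma 2}(4), and translation invariance in the fiber direction on the universal cover allows one to absorb bad curvature terms of $\ti{\om}(t)$ that would otherwise blow up under collapsing. This gives $\|\om(t)\|_{C^1(K,\ti{\om}(t))}\le C_{K,1}$.

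\textbf{Step 3: Local curvature bound.} With Step 2 and the metric equivalence in hand, apply the Fong--Zhang/Gill/Tosatti--Zhang local parabolic argument. Using
\[
(\de_t-\Delta)|\Rm|^2\le -|\nabla\Rm|^2-|\ov{\nabla}\Rm|^2+C|\Rm|^3
\]
together with the auxiliary function $\eta\cdot(S+1)(|\Rm|^2+1)-A\,\tr{\ti{\om}(t)}{\om(t)}$, the maximum principle forces $|\Rm(\om(t))|_{\om(t)}\le C_K$ on $K$. Since $\om(t)$ and $\ti{\om}(t)$ are equivalent on $K$, this gives the $k=0$ case of \eqref{Equ: Rm curvature bounded}; tracing yields the $k=0$ cases of \eqref{Equ: Ric curvature bounded} and \eqref{Equ: R curvature bounded}.

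\textbf{Step 4: Higher derivatives by Shi-type bootstrap.} Once $|\Rm|$ is uniformly bounded on $K$, Shi's local derivative estimates for the K\"ahler-Ricci flow give $|\nabla^{k,g(t)}\Rm|_{\om(t)}\le C_{K,k}$ on a slightly smaller compact set. Converting $g(t)$-norms and $g(t)$-covariant derivatives to $\ti{g}(t)$-norms and $\ti{g}(t)$-covariant derivatives is routine using Lemma \ref{Lem: relation between two metrics}, the metric equivalence, and the already-established bounds on lower-order $\nabla^{j,\ti{g}(t)}g(t)$ (starting with Step 2). Iterating, parabolic bootstrapping of $\ddt\log\det g=-R-n$ against $\ti{\om}(t)$ produces $\|\om(t)\|_{C^k(K,\ti{\om}(t))}\le C_{K,k}$ for all $k$, which feeds back into \eqref{Equ: Rm curvature bounded}--\eqref{Equ: R curvature bounded} for all $k$.

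\textbf{Main obstacle.} The hard step is Step 3: obtaining a local $|\Rm|$ bound without any global curvature control and while the metric collapses. The auxiliary function must simultaneously cut off near $S$ (to avoid the singular fibers), dominate the collapse-induced bad terms from $\Rm(\ti{\om}(t))$, and absorb the cubic $|\Rm|^3$ term. The flat-fiber hypothesis enters decisively here, because the universal-cover translations eliminate any ``transversal direction'' contribution of the fiber curvature that would otherwise obstruct the estimate.
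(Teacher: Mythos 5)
Your outline is not wrong in substance, but note first that the paper itself offers no proof of this lemma at all: it is stated purely as a citation to \cite{FZ, Gi0, GTZ, HT, TZ}, so there is no ``paper's proof'' to match. Comparing your reconstruction with what those references actually do: the mechanism in \cite{GTZ, HT, TZ} for torus(-quotient) fibers is to pass to a local holomorphic splitting over a small ball in $B\setminus S'$, lift to the universal cover of the fiber, and stretch the fiber coordinates by $e^{t/2}$; under this non-collapsing rescaling the pulled-back solution becomes uniformly equivalent to a fixed product metric with uniformly bounded local geometry, and the full family of $C^k$ bounds for $\om(t)$ \emph{and} all curvature bounds then follow in one stroke from standard local elliptic/parabolic estimates (Evans--Krylov and Schauder applied to the Monge--Amp\`ere equation on the cover), since curvature and covariant derivatives are invariant under the local biholomorphisms and the stretched reference metric descends to $\ti{\om}(t)$. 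This is cleaner than your cascade (Calabi $S$-estimate, then a maximum principle on $|\Rm|^2$, then Shi), and it sidesteps the genuinely delicate point you flag as the ``main obstacle'': running a maximum principle for $|\Rm|^2$ directly against a collapsing reference metric is exactly what the cited papers avoid, and it is not clear your cutoff-plus-absorption scheme closes without the covering trick. Two small slips: for the normalized flow $\frac{\de}{\de t}\om=-\Ric(\om)-\om$ on a manifold of complex dimension $n+m$ the scalar equation reads $\ddt\log\det g=-R-(n+m)$, not $-R-n$; and the evolution inequality for $|\Rm|^2$ acquires an extra harmless linear term from the normalization. Neither affects the conclusion, but if you intend Step 3 as written rather than the covering argument, you owe a real verification that the $\Rm(\ti{\om}(t))$ terms appearing when you commute $\ti{g}(t)$-covariant derivatives stay bounded --- which again is precisely the flat-fiber input delivered by the universal cover.
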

Now we can prove Theorem \ref{Thm: convergence of KRF with torus fiber}.
\begin{proof}[Proof of Theorem \ref{Thm: convergence of KRF with torus fiber}]
For all compact sets $K\subset X\backslash S$, we choose disks $B_1\subset\subset B_2\subset\subset B\backslash S'$ such that $K\subset f^{-1}(B_1)$. Set $U=f^{-1}(B_2)$. Then, as the proof of Theorem \ref{Thm: convergence Rm of CCY with torus fiber}, with the help of Lemma \ref{Lemma: basic lemma 2} and Lemma \ref{Lemma: basic lemma 3}, we can similarly verify the Conditions $(A)-(C)$ of Lemma \ref{Thm: BIC principle} with $$\eta(t)=\om(t)-\ti{\om}(t).$$
Hence from Lemma \ref{Thm: BIC principle} we have the convergence estimate
$$\|\om(t)-\ti\om(t)\|_{C^k(K,\ti{\om}(t))}\leq h_{K,k}(t).$$
Also, with Lemma \ref{Lemma: basic lemma 1} being replaced by Lemma \ref{Lemma: basic lemma 2}, the same argument as in the proof of Theorem \ref{Thm: convergence Rm of CCY with torus fiber} gives the convergence estimates
\begin{equation}\label{Equ: bound of Rm tensor}
\|\Rm(\om(t))-\Rm(\ti{\om}(t))\|_{C^k(K,\ti{\om}(t))}\leq h_{K,k}(t).
\end{equation}

Next we consider the Ricci curvature. First from \eqref{Equ: bound of Rm tensor} and Lemma \ref{Lemma: basic lemma 3}, we have
\begin{equation}\label{Equ: bound of Rm tensor 2}
\|\Rm(\ti\om(t))\|_{C^k(U,\ti{\om}(t))}\leq C_{K,k}.
\end{equation}
and hence naturally
\begin{equation}\label{Equ: bound of Ric tensor 1}
\|\Ric(\ti\om(t))\|_{C^k(U,\ti{\om}(t))}\leq C_{K,k}.
\end{equation}
Combining \eqref{Equ: bound of Ric tensor 1} and Lemma \ref{Lemma: basic lemma 3} we obtain
\begin{equation}\label{Equ: bound of Ric tensor 3}
\|\Ric(\om(t))\|_{C^k(U,\ti{\om}(t))}+\|\Ric(\ti\om(t))\|_{C^k(U,\ti{\om}(t))}\leq C_{K,k}.
\end{equation}
By definiton, we have
\begin{equation}\label{Equ: relation of Ric tensor}
\begin{split}
    &R(\om(t))_{i\bar{j}}-R(\ti{\om}(t))_{i\bar{j}}\\
    =&g(t)^{k\bar{l}}R(\om(t))_{i\bar{j}k\bar{l}}-\ti{g}(t)^{k\bar{l}}R(\ti{\om}(t))_{i\bar{j}k\bar{l}}\\
    =&\left[g(t)^{k\bar{l}}-\ti{g}(t)^{k\bar{l}}\right]R(\om(t))_{i\bar{j}k\bar{l}}+\ti{g}(t)^{k\bar{l}}\left[R(\om(t))_{i\bar{j}k\bar{l}}-R(\ti{\om}(t))_{i\bar{j}k\bar{l}}\right].\\
\end{split}
\end{equation}
Taking norms with respect to $\ti{\om}(t)$ gives that on $U$
\[
\begin{split}
&\left|\Ric(\om(t))-\Ric(\ti{\om}(t))\right|_{\ti{\om}(t)}\\
\leq& C\cdot \left|\ti{\om}(t)^{-1}-\om(t)^{-1}\right|_{\ti{\om}(t)}\cdot \left|\Rm(\om(t)\right|_{\ti{\om}(t)}+C\cdot \left|\Rm(\om(t))-\Rm(\ti{\om}(t))\right|_{\ti{\om}(t)}\\
\leq& C\cdot h_K(t)\cdot C_{K}+C\cdot h_{K}(t)\\
\leq&h_K(t).\\
\end{split}
\]
Hence the Conditions $(A)-(C)$ of Lemma \ref{Thm: BIC principle} with $$\eta(t)=\Ric(\om(t))-\Ric(\ti{\om}(t))$$
are satisfied, and we conclude from Lemma \ref{Thm: BIC principle} the convergence estimates
\begin{equation}\label{Equ: bound of Ric tensor}
\|\Ric(\om(t))-\Ric(\ti{\om}(t))\|_{C^k(K,\ti{\om}(t))}\leq h_{K,k}(t).
\end{equation}

Finally, for the scalar curvature, from Equation \eqref{Equ: bound of Ric tensor 3} and Lemma \ref{Lemma: basic lemma 3}, we naturally have
\begin{equation}\label{Equ: bound of scalar curvature}
\|R(\om(t))\|_{C^k(U)}+\|R(\ti\om(t))\|_{C^k(U)}\leq C_{K,k}.
\end{equation}
By definition we have
\begin{equation}\label{Equ: relation of scalar curvature}
\begin{split}
    &R(\om(t))-R(\ti{\om}(t))\\
    =&g(t)^{k\bar{l}}R(\om(t))_{k\bar{l}}-\ti{g}(t)^{k\bar{l}}R(\ti{\om}(t))_{k\bar{l}}\\
    =&\left[g(t)^{k\bar{l}}-\ti{g}(t)^{k\bar{l}}\right]R(\om(t))_{k\bar{l}}+\ti{g}(t)^{k\bar{l}}\left[R(\om(t))_{k\bar{l}}-R(\ti{\om}(t))_{k\bar{l}}\right].\\
\end{split}
\end{equation}
As before, we have on $U$
\[
\begin{split}
&\left|R(\om(t))-R(\ti{\om}(t))\right|\\
\leq& C\cdot \left|\ti{\om}(t)^{-1}-\om(t)^{-1}\right|_{\ti{\om}(t)}\cdot \left|\Ric(\om(t))\right|_{\ti{\om}(t)}+C\cdot \left|\Ric(\om(t))-\Ric(\ti{\om}(t))\right|_{\ti{\om}(t)}\\
\leq& C\cdot h_K(t)\cdot C_{K}+C\cdot h_{K}(t)\leq h_K(t).\\
\end{split}
\]
Hence the Conditions $(A)-(C)$ of Lemma \ref{Thm: BIC principle} with $$\eta(t)=R(\om(t))-R(\ti{\om}(t))$$
are satisfied, and we conclude from Lemma \ref{Thm: BIC principle} that
$$\|R(\om(t))-R(\ti{\om}(t))\|_{C^k(K,\ti{\om}(t))}\leq h_{K,k}(t).$$
This completes the proof of Theorem \ref{Thm: convergence of KRF with torus fiber}.
\end{proof}
Finally, we prove Theorem \ref{Thm: convergence Ric curvature to GKE of KRF with torus fiber}. First, we need the following Proposition.

\begin{proposition}\label{Prop: higher-order derivative bound of GKE}
For all compact sets $U\subset\subset X\backslash S$ and all $k\geq 0$, there exists constants $C_{U,k}$ independent of $t$ such that for all $t\in [0,\infty)$ we have that
\begin{equation}\label{Equ: higher-order derivative bound of GKE}
\|\om_B\|_{C^k(U,\ti{\om}(t))}\leq C_{U,k}.
\end{equation}
\end{proposition}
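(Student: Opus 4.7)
The plan is to proceed by induction on $k$, combining the K\"ahler identity $\nabla^{k,\ti g(t)}\ti\om(t)\equiv 0$ with the structural decomposition $\ti\om(t)=(1-e^{-t})\om_B+e^{-t}\om_{\SF}$ and the estimates already furnished by Lemma~4.2, Lemma~4.3, and Theorem~1.5.

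For the base case $k=0$, Lemma~4.2(4) gives $\|\om_B\|^2_{\om(t)}\to m$ uniformly on $U$, and the uniform equivalence $C^{-1}\ti\om(t)\le\om(t)\le C\ti\om(t)$ on compact subsets of $X\setminus S$ (either from Lemma~4.3 or from Lemma~4.2(1)) then transfers this to a uniform bound on $|\om_B|_{\ti\om(t)}$.

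For the inductive step with $k\ge 1$, observe that since $\ti\om(t)$ is K\"ahler with respect to its own Levi-Civita connection, $\nabla^{k,\ti g(t)}\ti\om(t)\equiv 0$. Applying $\nabla^{k,\ti g(t)}$ to both sides of $\ti\om(t)=(1-e^{-t})\om_B+e^{-t}\om_{\SF}$ yields
\[
(1-e^{-t})\nabla^{k,\ti g(t)}\om_B=-e^{-t}\nabla^{k,\ti g(t)}\om_{\SF}.
\]
Since $e^{-t}/(1-e^{-t})$ is uniformly bounded for $t\ge 1$, bounding $\|\om_B\|_{C^k(U,\ti\om(t))}$ is equivalent to bounding $\|e^{-t}\om_{\SF}\|_{C^k(U,\ti\om(t))}$. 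This reduction is natural because $e^{-t}\om_{\SF}$ is the component of $\ti\om(t)$ that carries the fiber-direction collapse, and so ought to be of the right scale with respect to $\ti\om(t)$.

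To bound $\|e^{-t}\om_{\SF}\|_{C^k(U,\ti\om(t))}$, one works in an adapted local trivialization $V\times Y\to V$ of the torus fibration, in which $\om_{\SF}$ has a specific form whose fiber-fiber block depends only on the base coordinates and is flat on each fiber. In such a chart $\|\om_{\SF}\|_{C^k(U,\om_X)}$ is uniformly bounded as a fixed tensor. Then Lemma~3.2 converts $\nabla^{k,\ti g(t)}$ into derivatives with respect to the fixed background connection of $\om_X$, producing sums of terms of the form $\nabla^{i_1,\om_X}\beta*\cdots*\nabla^{i_j,\om_X}\beta$ with $\beta\in\{\ti g(t),e^{-t}\om_{\SF}\}$, contracted by $\ti g(t)$. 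The covariant derivatives $\nabla^{i,\om_X}(e^{-t}\om_{\SF})$ and $\nabla^{i,\om_X}\ti g(t)$ are controlled componentwise using the explicit form of $\om_{\SF}$ and of $\ti g(t)=(1-e^{-t})\om_B+e^{-t}\om_{\SF}$, while the contractions bring in the scalings of $\ti g(t)^{-1}$ (base-base of order $1$, mixed of order $1$, fiber-fiber of order $e^t$). The payoff of the torus assumption is that in every surviving index combination the factors of $e^t$ from $\ti g(t)^{-1}$ are exactly absorbed by the $e^{-t}$ prefactor together with the vanishing of certain components of $\om_{\SF}$ in the base directions of the product chart, giving the uniform bound $\|e^{-t}\om_{\SF}\|_{C^k(U,\ti\om(t))}\le C_{U,k}$.

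The main obstacle is the last step: the local coordinate bookkeeping needed to check that the $e^t$ scalings of $\ti g(t)^{-1}$ are always balanced by corresponding $e^{-t}$ scalings in $e^{-t}\om_{\SF}$ and its derivatives. This is essentially a tensor type-counting argument in product coordinates and, once completed, the K\"ahler identity of Step 2 closes the induction and yields the proposition.
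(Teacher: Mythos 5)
Your base case is fine, but the proof has a genuine gap at its core. The reduction in your inductive step is circular: since $\nabla^{k,\ti g(t)}\ti\om(t)=0$ for $k\ge 1$ and $|\ti\om(t)|_{\ti\om(t)}$ is a dimensional constant, the identity $(1-e^{-t})\nabla^{k,\ti g(t)}\om_B=-\nabla^{k,\ti g(t)}(e^{-t}\om_{\SF})$ shows that bounding $\|e^{-t}\om_{\SF}\|_{C^k(U,\ti\om(t))}$ is \emph{exactly equivalent} to the statement of the Proposition, not a simplification of it. All of the actual content is therefore pushed into your third paragraph, which you yourself flag as ``the main obstacle'' and do not carry out. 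That type-counting is not routine: the fibers are tori with \emph{varying} complex structure, so there is no holomorphic product trivialization $V\times Y$; one must pass to the local universal cover $B'\times\CC^{n}$, where $\om_{\SF}=\ddbar\eta$ with $\eta$ quadratic in the fiber variables. There the base--base and mixed components of $\om_{\SF}$ are generically nonzero (and grow along the noncompact fiber directions), so your assertion that the $e^{t}$ factors from $\ti g(t)^{-1}$ are absorbed ``together with the vanishing of certain components of $\om_{\SF}$ in the base directions'' is not correct as stated; the cancellation actually comes from the scaling behavior of $\eta$ under fiber dilations, and verifying it componentwise for all $k$ is precisely the work that remains.

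The paper's proof supplies the missing idea in a packaged form: following \cite[Theorem 5.24]{To4}, it pulls back to the local universal cover and applies the fiber-stretching maps $\lambda_t$, under which $p^*f^*\om_B$ is invariant while $\lambda_t^*p^*\om_{\SF}=e^tp^*\om_{\SF}$, so that $\lambda_t^*p^*\ti\om(t)$ becomes a \emph{fixed} uniformly nondegenerate metric $\check\om(t)$ (up to the harmless factor $1-e^{-t}$). The desired norms then become norms of a fixed tensor with respect to a fixed metric over the sets $\lambda_{1/t}(K)$, which all lie in a single compact set, and the bound is immediate. If you want to salvage your approach, you should either carry out the componentwise bookkeeping in full (tracking the exact power of $e^{-t}$ in every component of every iterated covariant derivative of $g_B$), or, more efficiently, adopt the stretching-map formulation, which is the clean way of doing that same bookkeeping.
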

\begin{proof}
We adopt the notations of \cite[Theorem 5.24, p363-368]{To4}. We just need to consider the case when $X_b$ is in fact bi-holomorphic to a torus for some $b\in B\backslash S'$. Then we have
$$\lambda_t^*p^*f^*\om_B=p^*f^*\om_B,$$
and 
$$\lambda_t^*p^*\om_{\SF}=\lambda_t^*\ddbar\eta=\ddbar(\eta\circ\lambda_t)=e^t\ddbar\eta=e^tp^*\om_{\SF}.$$
Hence we have
$$\lambda_t^*p^*(\ti{\om}(t))=\lambda_t^*p^*((1-e^{-t})f^*\om_B+e^{-t}\om_{\SF})=(1-e^{-t})p^*f^*\om_B+p^*\om_{\SF}=:\Check{\om}(t).$$
Now we may assume that $t\geq 1$ so that the factor $(1-e^{-t})\in (\frac{1}{2},1)$ would be a harmless factor. Then we have for every compact set $K\subset B'\times\mathbb{C}^{n-m}$ (here $B'$ is the $B$ in the notations of \cite[Theorem 5.24, p363-368]{To4}, which is a small ball in the regular part of the base space $B$ here) there are constants $C_{K,k}$ such that
$$\|p^*f^*\om_B\|_{C^k(K,\Check{\om}(t))}\leq C_{K,k},$$
for all $t\geq 1$. We can rewrite this as 
$$\|\lambda_t^*p^*f^*\om_B\|_{C^k(K,\lambda_t^*p^*(\ti{\om}(t)))}\leq C_{K,k}.$$
Given any open subset $U\subset X\backslash S$, if $K'\subset\subset U\subset X\backslash S$ is a compact set which is small enough so that $K=p^{-1}(K')\subset B'\times\mathbb{C}^{n-m}$ is compact and $p$ is a bi-holomorphism on $K$, (note that such compact sets $K'$ cover $U$) then we have
$$\|f^*\om_B\|_{C^k(K',\ti{\om}(t))}=\|p^*f^*\om_B\|_{C^k(K,p^*f^*\ti{\om}(t))}=\|\lambda_t^*p^*f^*\om_B\|_{C^k(\lambda_{1/t}(K),\lambda_t^*p^*(\ti{\om}(t)))},$$
where $\lambda_{1/t}$ is the inverse map of $\lambda_t$. But the compact sets $\lambda_{1/t}(K)$ ($t\geq 1$) are all contained in a fixed compact set of $B'\times\mathbb{C}^{n-m}$, hence we have
$$\|f^*\om_B\|_{C^k(K',\ti{\om}(t))}=\|\lambda_t^*p^*f^*\om_B\|_{C^k(\lambda_{1/t}(K),\lambda_t^*p^*(\ti{\om}(t)))}\leq C_{K',k},$$
and so by a covering argument we easily obtain
$$\|\om_B\|_{C^k(U,\ti{\om}(t))}\leq C_{U,k}.$$
This finishes the proof of Proposition \ref{Prop: higher-order derivative bound of GKE}.
\end{proof}

\begin{remark}
The same conclusion of Proposition \ref{Prop: higher-order derivative bound of GKE} holds with $\om_B$ being replaced by any other fixed K\"ahler metric on the regular part of the base space $B$. 
\end{remark}

Now we can prove Theorem \ref{Thm: convergence Ric curvature to GKE of KRF with torus fiber}.
\begin{proof}[Proof of Theorem \ref{Thm: convergence Ric curvature to GKE of KRF with torus fiber}]
Along the normalized K\"ahler-Ricci flow \eqref{Equ: normalized KRF 2}, we have  on $X\backslash S\times[0,\infty)$
\begin{equation}\label{Equ: relation of Ric and u}
\begin{split}
\Ric(\om(t))
&=-\ddbar (\dot{\vp}+\vp)-\chi\\
&=-\ddbar (\dot{\vp}+\vp-v)-(\chi+\ddbar v)\\
&=-\ddbar u-\om_B.\\
\end{split}
\end{equation}
We define $\eta(t)$ to be the $(1,1)$ form
$$\eta(t)=\Ric(\om(t))+\om_B=-\ddbar u.$$
Combining Lemma \ref{Lemma: basic lemma 3} with Proposition \ref{Prop: higher-order derivative bound of GKE}, we have
\begin{equation}\label{Equ: higher-order bound for eta}
\|\eta(t)\|_{C^k(K,\ti{\om}(t))}\leq \|\Ric(\om(t))\|_{C^k(K,\ti{\om}(t))}+\|\om_B\|_{C^k(K,\ti{\om}(t))}\leq C_{K,k},
\end{equation}
for any compact subset $K\subset\subset X\backslash S$.

We need to prove that 
\begin{equation}\label{Equ: general order convergence for eta}
\|\eta(t)\|_{C^0(K,\ti{\om}(t))}\leq h_K(t),
\end{equation}
for any compact subset $K\subset\subset X\backslash S$. To this end, we choose disks $B_1\subset\subset B_2\subset\subset B\backslash S'$ such that $K\subset f^{-1}(B_1)$. Set $U=f^{-1}(B_2)$. According to Lemma \ref{Lemma: basic lemma 2}, we have the estimate  
$$|\nabla u|_{\ti{\om}(t)}^2\leq h_K(t)$$
on $U\times[0,\infty)$.
Combining Equations \eqref{Equ: bound of Rm tensor 2} and \eqref{Equ: higher-order bound for eta} (with $K$ being replaced by $U$ in Equation \eqref{Equ: higher-order bound for eta}), we have on $U\times[0,\infty)$
\[
\begin{split}
&\left(-\Delta_{\ti{\om}(t)}\right)\left(\left|\nabla u\right|_{\ti{\om}(t)}^2\right)\\
=&-2\left|\nabla^{2,\ti{g}(t)}{u}\right|_{\ti{\om}(t)}^2-\ti{g}(t)^{a\bar{b}}\ti{g}(t)^{i\bar{j}}\cdot\nabla^{\ti{g}(t)}_a\nabla^{\ti{g}(t)}_{\bar{b}}\nabla^{\ti{g}(t)}_i{u}\nabla^{\ti{g}(t)}_{\bar{j}}{u}-\ti{g}(t)^{a\bar{b}}\ti{g}(t)^{i\bar{j}}\cdot\nabla^{\ti{g}(t)}_i{u}\nabla^{\ti{g}(t)}_a\nabla^{\ti{g}(t)}_{\bar{b}}\nabla^{\ti{g}(t)}_{\bar{j}}{u}\\
=&-2\left|\nabla^{2,\ti{g}(t)}{u}\right|_{\ti{\om}(t)}^2-\ti{g}(t)^{a\bar{b}}\ti{g}(t)^{i\bar{j}}\cdot\nabla^{\ti{g}(t)}_a\nabla^{\ti{g}(t)}_i\nabla^{\ti{g}(t)}_{\bar{b}}{u}\nabla^{\ti{g}(t)}_{\bar{j}}{u}\\
&-\ti{g}(t)^{a\bar{b}}\ti{g}(t)^{i\bar{j}}\cdot\nabla^{\ti{g}(t)}_i{u}\left\{\nabla^{\ti{g}(t)}_{\bar{b}}\nabla^{\ti{g}(t)}_a\nabla^{\ti{g}(t)}_{\bar{j}}{u}+{\Rm(\ti{\om}(t))^{\bar{q}}}_{\bar{j}a\bar{b}}\nabla^{\ti{g}(t)}_{\bar{q}}{u}\right\}\\
=&-2\left|\nabla^{2,\ti{g}(t)}{u}\right|_{\ti{\om}(t)}^2+\ti{g}(t)^{a\bar{b}}\ti{g}(t)^{i\bar{j}}\cdot\nabla^{\ti{g}(t)}_a\eta_{i\bar{b}}\nabla^{\ti{g}(t)}_{\bar{j}}{u}-\ti{g}(t)^{a\bar{b}}\ti{g}(t)^{i\bar{j}}\cdot\nabla^{\ti{g}(t)}_i{u}\left\{-\nabla^{\ti{g}(t)}_{\bar{b}}\eta_{a\bar{j}}+{\Rm(\ti{\om}(t))^{\bar{q}}}_{\bar{j}a\bar{b}}\nabla^{\ti{g}(t)}_{\bar{q}}{u}\right\}\\
=&-2\left|\nabla^{2,\ti{g}(t)}{u}\right|_{\ti{\om}(t)}^2+\nabla^{\ti{g}(t)}{\eta}*\nabla^{\ti{g}(t)}{u}+\Rm(\ti{\om}(t))*\nabla^{\ti{g}(t)}{u}*\nabla^{\ti{g}(t)}{u}\\
\leq&-2\left|\nabla^{2,\ti{g}(t)}{u}\right|_{\ti{\om}(t)}^2+C\cdot\left|\nabla^{\ti{g}(t)}{\eta}\right|_{\ti{\om}(t)}\cdot\left|\nabla^{\ti{g}(t)}{u}\right|_{\ti{\om}(t)}+C\cdot\left|\Rm(\ti{\om}(t))\right|_{\ti{\om}(t)}\cdot\left|\nabla^{\ti{g}(t)}{u}\right|_{\ti{\om}(t)}^2\\
\leq&-2\left|\nabla^{2,\ti{g}(t)}{u}\right|_{\ti{\om}(t)}^2+C\cdot\left|\nabla^{\ti{g}(t)}{u}\right|_{\ti{\om}(t)}\\
\leq&-2\left|\nabla^{2,\ti{g}(t)}{u}\right|_{\ti{\om}(t)}^2+h(t),\\
\end{split}
\]
where $*$ denotes tensor contraction by $\ti{g}(t)$.  Also, we naturally have
$$\left|\nabla^{2,\ti{g}(t)}{u}\right|_{\ti{\om}(t)}^2\geq \left|\eta\right|_{\ti{\om}(t)}^2.$$
Hence we conclude that
\begin{equation}
\left(-\Delta_{\ti{\om}(t)}\right)\left(\left|\nabla u\right|_{\ti{\om}(t)}^2\right)\leq-2\left|\eta\right|_{\ti{\om}(t)}^2+h(t).\\
\end{equation}
Next, using \eqref{Equ: higher-order bound for eta} we have
\begin{equation}\label{Equ: evolution inequality of eta for k=0}
\begin{split}
&\left(-\Delta_{\ti{\om}(t)}\right)\left(\left|{\eta}\right|_{\ti{\om}(t)}^2\right)\\
=& -2\left|\nabla^{\ti{g}(t)}{\eta}\right|_{\ti{\om}(t)}^2+\nabla^{2,\ti{g}(t)}{\eta}*{\eta}\\
\leq& C\cdot\left|\nabla^{2,\ti{g}(t)}{\eta}\right|_{\ti{\om}(t)}\cdot\left|{\eta}\right|_{\ti{\om}(t)}\\
\leq& C\cdot\left|{\eta}\right|_{\ti{\om}(t)}.\\
\end{split}
\end{equation}
where $*$ denotes tensor contraction by $\ti{g}(t)$. Hence using \eqref{Equ: higher-order bound for eta} we can further compute on $U$
\begin{equation}\label{Equ: evolution inequality of eta for k=0 2}
\begin{split}
&\left(-\Delta_{\ti{\om}(t)}\right)\left(\left|{\eta}\right|_{\ti{\om}(t)}^4\right)\\
\leq& 2\left|{\eta}\right|_{\ti{\om}(t)}^2\cdot\left(-\Delta_{\ti{\om}(t)}\right)\left(\left|{\eta}\right|_{\ti{\om}(t)}^2\right)-2\left|\nabla\left|{\eta}\right|_{\ti{\om}(t)}^2\right|_{\ti{\om}(t)}^2\\
\leq& C\cdot\left|{\eta}\right|_{\ti{\om}(t)}^3-2\left|\nabla\left|{\eta}\right|_{\ti{\om}(t)}^2\right|_{\ti{\om}(t)}^2\\
\leq& C\cdot\left|{\eta}\right|_{\ti{\om}(t)}^2-2\left|\nabla\left|{\eta}\right|_{\ti{\om}(t)}^2\right|_{\ti{\om}(t)}^2.\\
\end{split}
\end{equation}
Choose cut-off function $\rho$ as in the proof of Theorem \ref{Thm: convergence of higher order estimate for CY},  and use again \eqref{Equ: higher-order bound for eta} we can compute on $U$
\[
\begin{split}
&\left(-\Delta_{\ti{\om}(t)}\right)\left(\rho^2\left|{\eta}\right|_{\ti{\om}(t)}^4\right)\\
=& \rho^2\left(-\Delta_{\ti{\om}(t)}\right)\left(\left|{\eta}\right|_{\ti{\om}(t)}^4\right)+\left|{\eta}\right|_{\ti{\om}(t)}^4\left(-\Delta_{\ti{\om}(t)}\right)\left(\rho^2\right)-2\mathrm{Re}\left\{\left<\nabla\rho^2, \nabla \left|{\eta}\right|_{\ti{\om}(t)}^4\right>_{\ti{\om}(t)}\right\}\\
\leq& \rho^2\left(C\cdot\left|{\eta}\right|_{\ti{\om}(t)}^2-2\left|\nabla\left|{\eta}\right|_{\ti{\om}(t)}^2\right|_{\ti{\om}(t)}^2\right)+C\cdot \left|{\eta}\right|_{\ti{\om}(t)}^4+C\cdot\rho|\nabla \rho|_{\ti{\om}(t)}\left|{\eta}\right|_{\ti{\om}(t)}^2\left|\nabla \left|{\eta}\right|_{\ti{\om}(t)}^2\right|_{\ti{\om}(t)}\\
\leq& -2\rho^2\left|\nabla\left|{\eta}\right|_{\ti{\om}(t)}^2\right|_{\ti{\om}(t)}^2+C\cdot \left|{\eta}\right|_{\ti{\om}(t)}^2+C\cdot\left(\rho\left|\nabla \left|{\eta}\right|_{\ti{\om}(t)}^2\right|_{\ti{\om}(t)}\right)\left|{\eta}\right|_{\ti{\om}(t)}^2\\
\leq& -2\rho^2\left|\nabla\left|{\eta}\right|_{\ti{\om}(t)}^2\right|_{\ti{\om}(t)}^2+C\cdot \left|{\eta}\right|_{\ti{\om}(t)}^2+\rho^2\left|\nabla\left|{\eta}\right|_{\ti{\om}(t)}^2\right|_{\ti{\om}(t)}^2+C\cdot \left|{\eta}\right|_{\ti{\om}(t)}^4\\
\leq& C\cdot \left|{\eta}\right|_{\ti{\om}(t)}^2.\\
\end{split}
\]
Now we conclude that we can find some $h(t)$ such that on $U$
\begin{equation}\label{Equ: inequality for higher order 2}
\left\{
\begin{aligned}
      &|\nabla u|_{\ti{\om}(t)}^2h(t)^{-1}\leq 1,\\
      &\left(-\Delta_{\ti{\om}(t)}\right)\left(|\nabla u|_{\ti{\om}(t)}^2h(t)^{-1}\right)\leq-2\left|{\eta}\right|_{\ti{\om}(t)}^2h(t)^{-1}+1,\\
      &\left(-\Delta_{\ti{\om}(t)}\right)\left(\rho^2\left|{\eta}\right|_{\ti{\om}(t)}^4h(t)^{-1}\right)\leq C\cdot \left|{\eta}\right|_{\ti{\om}(t)}^2h(t)^{-1}.\\
\end{aligned}
\right.
\end{equation}
Set
$$Q:=\rho^2\left|{\eta}\right|_{\ti{\om}(t)}^4h(t)^{-1}+C\cdot |\nabla u|_{\ti{\om}(t)}^2h(t)^{-1}.$$
Using \eqref{Equ: inequality for higher order 2}, on $U\times[0,\infty)$ we have 
$$\left(-\Delta_{\ti{\om}(t)}\right)\left(Q\right)\leq -\left|{\eta}\right|_{\ti{\om}(t)}^2h(t)^{-1}+C.$$
Now, at a given time $t$, assume $Q$ achieves it's maximum at point $x_0$. If $x_0\in \de U$, where $\rho\equiv 0$, using \eqref{Equ: inequality for higher order 2}, $Q$ has an upper bound $C$ at time $t$, and we are done. Otherwise $x_0\in U$ and by maximum principle, we have 
$$0\leq \left(-\Delta_{\ti{\om}(t)}\right)\left(Q\right)(x_0)\leq -\left|{\eta}\right|_{\ti{\om}(t)}^2(x_0)h(t)^{-1}+C,$$
which gives 
$$\left|{\eta}\right|_{\ti{\om}(t)}^2(x_0)h(t)^{-1}\leq C.$$
Then by \eqref{Equ: higher-order bound for eta} and \eqref{Equ: inequality for higher order} we have on $U$
$$Q\leq Q(x_0)\leq C_{K}^2\left|{\eta}\right|_{\ti{\om}(t)}^2(x_0)h(t)^{-1}+C\cdot |\nabla u|_{\ti{\om}(t)}^2(x_0)h(t)^{-1}\leq C.$$
Since $\rho\equiv 1$ on $K$, we obtian the estimate
$$\left|{\eta}\right|_{\ti{\om}(t)}^2\leq Ch(t)^{\frac{1}{2}}$$
on $K$. Hence we conclude
\begin{equation}\label{Equ: conditions of Ric}
\left\{
\begin{aligned}
      &\|\eta(t)\|_{C^0(U,\ti\om_t)}\leq h_K(t),\\
      &\|\eta(t)\|_{C^k(U,\ti\om_t)}\leq C_{K,k}.\\
\end{aligned}
\right.
\end{equation}
Now Conditions $(A)$ and $(B)$ of Lemma \ref{Thm: BIC principle} are satisfied for $\eta(t)$. Hence from Lemma \ref{Thm: BIC principle} we conclude the local convergence
\begin{equation}\label{Equ: convergence of Ric}
\|\Ric(\om(t))+\om_B\|_{C^k(K,\ti\om(t))}\leq h_{K,k}(t).
\end{equation}

Finally, for the scalar curvature, Lemma \ref{Lemma: basic lemma 3} gives that
$$\|R(\om(t))\|_{C^k(U,\ti\om(t))}\leq C_{K,k}(t).$$
The estimate \eqref{Equ: convergence of Ric} or the main result of \cite{J} implies that
$$\|R(\om(t))+m\|_{C^0(U)}\leq h(t).$$
Hence set
$$\ti\eta(t)=R(\om(t))+m$$
and apply Lemma \ref{Thm: BIC principle}, we conclude the local convergence
$$\|R(\om(t))+m\|_{C^k(K,\ti\om_t)}\leq h_{K,k}(t).$$
This establish \eqref{Equ: convergence scalar curvature to GKE of KRF with torus fiber} and hence completes the proof of Theorem \ref{Thm: convergence Ric curvature to GKE of KRF with torus fiber}.
\end{proof}



\begin{thebibliography}{99}

\bibitem{Au} T. Aubin, {\em \'Equations du type Monge-Amp\`ere sur les vari\'et\'es k\"ahleriennes compactes}, C. R. Acad. Sci. Paris S\'er. A-B {\bf 283} (1976), no. 3, Aiii, A119--A121.

\bibitem{FG} W. Fischer, H. Grauert, {\em Lokal-triviale Familien kompakter komplexer Mannigfaltigkeiten}, Nachr. Akad. Wiss. G\"ottingen Math.-Phys. Kl. II (1965), 89--94.

\bibitem{FZys} F.T.-H. Fong, Y.S. Zhang, {\em Local curvature estimates of long-time solutions to the Kähler-Ricci flow}, preprint, arXiv:1903.05939.

\bibitem{FZ} F.T.-H. Fong, Z. Zhang, \emph{The collapsing rate of the K\"ahler-Ricci flow with regular infinite time singularity}, J. reine angew. Math. {\bf 703} (2015), 95--113.

\bibitem{Gi0} M. Gill, {\em Convergence of the parabolic complex Monge-Amp\`ere equation on compact Hermitian manifolds}, Comm. Anal. Geom. {\bf 19} (2011), no. 2, 277--303.

\bibitem{Gi} M. Gill, {\em Collapsing of products along the K\"ahler-Ricci flow}, Trans. Amer. Math. Soc. {\bf 366} (2014), no. 7, 3907--3924.

\bibitem{GSVY} B. Greene, A. Shapere, C. Vafa, S.-T. Yau, {\em Stringy cosmic strings and noncompact Calabi-Yau manifolds}, Nuclear Phys. B {\bf 337} (1990), no. 1, 1--36.

\bibitem{GTZ} M. Gross, V. Tosatti, Y. Zhang, {\em Collapsing of abelian fibered Calabi-Yau manifolds}, Duke Math. J. {\bf 162} (2013), no. 3, 517--551.

\bibitem{GTZ2} M. Gross, V. Tosatti, Y. Zhang, {\em Gromov-Hausdorff collapsing of Calabi-Yau manifolds}, Comm. Anal. Geom. {\bf 24} (2016), no. 1, 93--113.

\bibitem{GW} M. Gross, P.M.H. Wilson, {\em Large complex structure limits of $K3$ surfaces}, J. Differential Geom. {\bf 55} (2000), no. 3, 475--546.

\bibitem{He} H.-J. Hein, {\em Gravitational instantons from rational elliptic surfaces}, J. Amer. Math. Soc. {\bf 25} (2012), no. 2, 355--393.

\bibitem{He2} H.-J. Hein, {\em A Liouville theorem for the complex Monge-Amp\`ere equation on product manifolds}, to appear in Comm. Pure Appl. Math.

\bibitem{HT} H.-J. Hein, V. Tosatti, {\em Remarks on the collapsing of torus fibered Calabi-Yau manifolds},  Bull. Lond. Math. Soc. {\bf 47} (2015), no. 6, 1021--1027.

\bibitem{HT2} H.-J. Hein, V. Tosatti, {\em Higher-order estimates for collapsing Calabi-Yau metrics}, preprint, arXiv:1803.06697.

\bibitem{J} W. Jian, {\em Convergence of scalar curvature of K\"ahler-Ricci flow on manifolds of positive Kodaira dimension}, preprint, arXiv:1805.07884.

\bibitem{JSS} W. Jian, Y. Shi, and J. Song, {\em A remark on constant scalar curvature K\"ahler metrics on minimal models}, arXiv:1805.06863v1, to appear at Proceedings of A.M.S.

\bibitem{Li} Y. Li, {\em On collapsing Calabi-Yau fibrations}, preprint, arXiv:1706.10250.

\bibitem{LLZ} C. Li, J. Li, X. Zhang, {\em A mean value formula and a Liouville theorem for the complex Monge-Amp\`ere equation}, preprint, arXiv:1709.05754.

\bibitem{ST1} J. Song, G. Tian, {\em The K\"ahler-Ricci flow on surfaces of positive Kodaira dimension}, Invent. Math. {\bf 170} (2007), no. 3, 609--653.

\bibitem{ST2} J. Song,  G. Tian, {\em Canonical measures and K\"ahler-Ricci flow}, J. Amer. Math. Soc. {\bf 25} (2012), no. 2, 303--353.

\bibitem{ST3} J. Song,  G. Tian, {\em Bounding scalar curvature for global solutions of the K\"ahler-Ricci flow}, Amer. J. Math. {\bf 138} (2016), no. 3, 683--695.

\bibitem{ST4} J. Song,  G. Tian, {\em The K\"ahler-Ricci flow through singularities}, Invent. Math. {\bf 207} (2017), 519-595.

\bibitem{STZ} J. Song, G. Tian and Z. Zhang, {\em Collapsing behavior of Ricci-flat Kahler metrics and long time solutions of the Kahler-Ricci flow}, preprint, arXiv:1904.08345.

\bibitem{SW1} J. Song, B. Weinkove, {\em Contracting exceptional divisors by the Kähler-Ricci flow}, Duke Math. J. {\bf 162} (2013), no. 2, 367–415.

\bibitem{SW2} J. Song, B. Weinkove, {\em Contracting exceptional divisors by the Kähler-Ricci flow II}, Proc. Lond. Math. Soc. (3) {\bf 108} (2014), no. 6, 1529–1561.

\bibitem{SW3} J. Song, B. Weinkove, {\em Introduction to the K\"ahler-Ricci flow},  Chapter 3 of `Introduction to the K\"ahler-Ricci flow', eds S. Boucksom, P. Eyssidieux, V. Guedj, Lecture Notes Math. 2086, Springer 2013.

\bibitem{To} V. Tosatti, {\em Adiabatic limits of Ricci-flat K\"ahler metrics}, J. Differential Geom. {\bf 84} (2010), no. 2, 427--453.

\bibitem{To2} V. Tosatti, {\em Degenerations of Calabi-Yau metrics}, in {\em Geometry and Physics in Cracow,} Acta Phys. Polon. B Proc. Suppl. {\bf 4} (2011), no. 3, 495--505.

\bibitem{To3} V. Tosatti, {\em Calabi-Yau manifolds and their degenerations}, Ann. N.Y. Acad. Sci. {\bf 1260} (2012), 8--13.

\bibitem{To4} V. Tosatti, {\em KAWA lecture notes on the K\"ahler-Ricci flow}, to appear in Ann. Fac. Sci. Toulouse Math.

\bibitem{TZ} V. Tosatti, Y. Zhang, {\em Infinite time singularities of the K\"ahler-Ricci flow},  Geom. Topol. {\bf 19} (2015), no. 5, 2925--2948.

\bibitem{TZ2} V. Tosatti, Y. Zhang, {\em Collapsing hyperk\"ahler manifolds}, preprint, arXiv:1705.03299.

\bibitem{TZ3} V. Tosatti, Y. Zhang, {\em Triviality of fibered Calabi-Yau manifolds without singular fibers}, Math. Res. Lett. {\bf 21} (2014), no.4, 905--918.

\bibitem{TWY} V. Tosatti, B.Weinkove, and X.K. Yang, {\em The K\"ahler-Ricci flow, Ricci-flat metrics and collapsing limits}, arXiv:1408.0161, to appear in Amer. J. Math.

\bibitem{TZzl16} G. Tian, Z.L. Zhang, {\em Convergence of K\"ahler-Ricci flow on lower-dimensional algebraic manifolds of general type}, Int. Math. Res. Not. IMRN 2016, no. 21, 6493-6511

\bibitem{TZzl18} G. Tian, Z.L. Zhang, {\em Relative volume comparison of Ricci flow and its applications}, arXiv:1802.09506

\bibitem{TiZha} G. Tian, Z. Zhang, {\em On the K\"ahler-Ricci flow on projective manifolds of general type},  Chinese Ann. Math. Ser. B  {\bf27}  (2006),  no. 2, 179--192.

\bibitem{Yau} S.T. Yau, {\em On the Ricci curvature of a compact K\"ahler manifold and the complex Monge-Amp\`ere equation. I.},  Comm. Pure Appl. Math. 31 (1978), no. 3, 339-411.

\bibitem{Yau2} S.-T. Yau, {\em A general Schwarz lemma for K\"ahler manifolds}, Amer. J. Math. {\bf 100} (1978), no. 1, 197--203.

\bibitem{Zys17} Y. Zhang,  {\em Infinite-time singularity type of the K\"ahler-Ricci flow}, J. Geom. Anal. (2017), https://doi.org/10.1007/s12220-017-9949-2.

\bibitem{Zys18} Y. Zhang, {\em Collapsing limits of the K\"ahler-Ricci flow and the continuity method}, Math. Ann. (2018), https://doi.org/10.1007/s00208-018-1676-x.

\bibitem{Zys19} Y. Zhang, {\em Infinite-time singularity type of the K\"ahler-Ricci flow II}, Math. Res. Lett. (2019) (to appear), arXiv:1809.01305.

\bibitem{Z1} Z. Zhang, {\em Scalar curvature bound for K\"ahler-Ricci flows over minimal manifolds of general type}, Int. Math. Res. Not. 2009; doi: 1093/imrn/rnp073

\bibitem{Z2} Z. Zhang, {\em Scalar curvature behavior for finite time singulairty of K\"ahler-Ricci flow}, Michigan Math. J. {\bf 59} (2010), no. 2, 419--433

\end{thebibliography}
\end{document}